\newtheorem{thm}{Theorem}[section]
\newtheorem{pro}[thm]{Proposition}
\newtheorem{lem}[thm]{Lemma}
\newtheorem{cor}[thm]{Corolary}
\theoremstyle{definition}
\newtheorem{defi}[thm]{Definition}
\newtheorem{rem}[thm]{Remark}
\newtheorem{exa}[thm]{Example}
\newcommand{\nd}{\noindent}
\newcommand{\vu}{\vspace{.1cm}}
\newcommand{\vd}{\vspace{.2cm}}
\newcommand{\e}{\mathfrak{e}}
\begin{document}
\allowdisplaybreaks

\title[Partial (Co)Actions of Multiplier Hopf Algebras: Morita and Galois]{Partial (Co)Actions of Multiplier Hopf Algebras: Morita and Galois theories}

\author[Azevedo]{Danielle Azevedo}
\address[Azevedo]{Universidade Federal do Rio Grande do Sul, Brazil}
\email{danielle.azevedo@viamao.ifrs.edu.br}

\author[Batista]{Eliezer Batista}
\address[Batista]{Universidade Federal de Santa Catarina, Brazil}
\email{eliezer1968@gmail.com}

\author[Fonseca]{Graziela Fonseca}
\address[Fonseca]{Universidade Federal do Rio Grande do Sul, Brazil}
\email{grazielalangone@gmail.com}

\author[Fontes]{Eneilson Fontes}
\address[Fontes]{Universidade Federal do Rio Grande, Brazil}
\email{eneilsonfontes@furg.br}

\author[Martini]{Grasiela Martini}
\address[Martini]{Universidade Federal do Rio Grande, Brazil}
\email{grasiela.martini@furg.br}

\begin{abstract}
	In this work we deal with partial (co)actions of multiplier Hopf algebras on not necessarily unital algebras. Our main goal is to construct a Morita context relating the coinvariant algebra $R^{\underline{coA}}$ with a certain subalgebra of the smash product $R\#\widehat{A}$. Besides that, we present the notion of partial Galois coaction, which is closely related to this Morita context.
\end{abstract}

\maketitle

{\nd\scriptsize{\bf Key words:} multiplier Hopf algebra, partial action, smash product, Morita theory, Galois theory}\\
{\nd\scriptsize{\bf Mathematics Subject Classification:} primary 16T99; secondary 20L05}

\footnote{The first and the fifth authors were supported by CAPES, Brazil. The third author was supported by CNPq, Brazil.}

\section{introduction}
Partial group actions were introduced by R. Exel in \cite{exel}, within the context of operator algebras in order to describe some $\mathbb{Z}$-graded $C^*$-algebras which were not isomorphic to a crossed product. Some years later, M. Dockuchaev and R. Exel in \cite{exel2}, restated some results of partial group actions for a purely algebraic context, giving rise to several generalizations of classical results of the theory of group representations to the partial setting \cite{Mishasurvey}.

The subject of partial actions entered into the realm of Hopf algebras motivated by the theory of Galois extensions of commutative algebras by partial group actions \cite{DFP}. Extending the results in \cite{CdG}, in which the Galois theory for partial group actions could be described in terms of a Galois coring, S. Caenepeel and K. Janssen introduced the so called partial entwining structures, giving rise to the notion of a partial co-action of a Hopf algebra on a unital algebra and its dual counterpart, namely, partial actions of Hopf algebras on unital algebras 
\cite{Caenepeel}. In the sequel, many authors have explored these new structures providing a great development of this theory \cite{Batistasurvey}. One of the first results for partial actions of Hopf algebras was the globalization theorem, which states that every symmetric partial action of a Hopf algebra on a unital algebra can be viewed as a restriction of a global action of the same Hopf algebra on a bigger algebra 
\cite{MunizandBatista}. Another important result, whose generalization will be explored in the present article, is the Morita context between the subalgebra of partial invariants and the partial smash product and its relation with partial Galois extensions \cite{Marcelo}.

The core of the theory of partial Hopf actions is better understood under the conceptual framework of partial representations \cite{representation}. Given a Hopf algebra $H$, it is possible to introduce a Hopf algebroid $H_{par}$ such that partial modules of the original Hopf algebra correspond to modules over this new Hopf algebroid. The category of partial $H$-modules has a structure of a closed monoidal category and the partial actions of $H$ correspond to the algebra objects in this category. In the literature, partial actions of Hopf algebras were considered only on unital algebras. The case of partial actions of Hopf algebras over nonunital algebras was recently studied in \cite{Ferrazza}. There are some differences in this case concerning to globalization, instead of a general globalization theorem obtained in \cite{MunizandBatista}, in the case without unit, every partial action is Morita equivalent to a globalizable one, analogous to the result for partial actions of groups obtained in \cite{ADES}.

The duality between partial actions and coactions was explored first in \cite{MunizandBatista}, and more recently in \cite{Batista}. For finite dimensional Hopf algebras there exists an exact duality between partial right $H$-comodules and partial left $H^*$-modules. Going beyond the finite dimensional case introduces a conceptual difficulty concerning this duality because, in general, the finite dual, $H^{\circ}$ of a Hopf algebra $H$ is not ``big enough'' to present an exact duality between partial (co)actions. One way to circumvent this natural obstacle is to extend the theory to a class of objects which are well behaved with respect to duality and for which notions of action and coaction still make sense. This motivates one to work with multiplier Hopf algebras \cite{Multiplier}. 

The concept of a multiplier Hopf algebra was introduced by Alfons Van Daele, the original motivation was to introduce an algebraic framework for dealing with group duality \cite{Frame}. Given an infinite discrete group $G$, one can think the group algebra $\Bbbk G$ as the algebra of finitely supported functions defined on  the group $G$ taking values in $\Bbbk$ with the convolution product. This group algebra is a Hopf algebra, but its dual vector space, $A_G$, which is isomorphic to the non-unital algebra of functions of finite support from $G$ to $\Bbbk$, but now with the pointwise product. This dual algebra, cannot be endowed with a Hopf algebra structure. That is the point where multiplier Hopf algebras take place.

Multiplier Hopf algebras are, in general non-unital, but an important result, that appeared in \cite{Sweedler}, is the existence of bilateral local units for them. These local unities allow one to work rigorously with the Sweedler notation for the comultiplication in almost the same way as it is done for Hopf algebras. Another important consequence of the existence of local units is that the algebra is nondegenerate idempotent and this implies that the comultiplication is a nondegenerate algebra homomorphism (cf.  \cite[Appendix]{Multiplier}). Regular multiplier Hopf algebras, that is, multiplier Hopf algebras with bijective antipodes, are quite well behaved, having properties similar to finite dimensional Hopf algebras 
\cite{S4}, and the existence of invariant integrals on allows one to obtain an exact duality, that is, the dual vector space of a regular multiplier Hopf algebra is again a regular multiplier Hopf algebra \cite{Duality}. Therefore, multiplier Hopf algebras are useful to generalize results relative to Hopf algebras for the non-unital case in such a way that duality is preserved.

Actions of multiplier Hopf algebras were introduced in \cite{Action}, where also the smash product algebra was introduced and several results similar to those for Hopf algebras. Coactions of multiplier Hopf algebras, in their turn, were introduced in \cite{Galois}. In the case where the existence of integrals is ensured, one can prove a duality theorem between actions and coactions. Finally, a Morita context linking the smash product algebra and the coinvariant algebra was constructed.

Our aim in this article is to introduce partial actions and coactions of multiplier Hopf algebras over not necessarily unital algebras, extending classical results found in \cite{Action} and \cite{Galois}. In particular, we construct a Morita context between the partial smash product and the algebra of partial invariants and discuss its relationship with partial Galois extensions.

This paper is organized as follows: Section 2 is devoted to mathematical preliminaries. There the main results concerning regular multiplier Hopf algebras, used throughout the text, are introduced. In Section 3, we introduce the concept of a partial coaction of a regular multiplier Hopf algebra $A$ on a nondegenerated algebra $R$ extending both the classical theory of coactions of multiplier Hopf algebras \cite{Galois} and the theory of partial coactions of Hopf algebras \cite{Caenepeel}. In Section 4 the notion of a partial action of a regular multiplier Hopf algebra $A$ on a nondegenerate algebra $R$ is introduced. Furthermore, we discuss the duality between partial actions and partial coactions. Finally, section 5 is devoted to construct a Morita context, under appropriate conditions, connecting  a subalgebra of the smash product algebra $R\# \widehat{A}$, in which $\widehat{A}$ is the dual of the multiplier Hopf algebra $A$ defined by the left invariant integral, and the coinvariant algebra $R^{\underline{coA}}$. This construction generalizes those presented in \cite{Galois}, for the case of global actions of multiplier Hopf algebras, and in \cite{Marcelo}, for the case of partial actions of Hopf algebras. The connection between this Morita context and the notion of a partial Galois extension is discussed, following the same steps found in \cite{Galois}.

\section{Mathematical preliminaries}

Throughout this paper, vector spaces and algebras will be all considered over a fixed field $\Bbbk$. The symbol $\otimes$ will always mean $\otimes_{\Bbbk}$. Recalling, the algebra of the multipliers of any algebra $A$ over a field $\Bbbk$, denoted by $M(A)$, is the usual $\Bbbk$-vector space of all the ordered pairs $(U,V)$ of linear maps of $A$ that satisfy the following conditions: 
\begin{itemize}
	\item[(i)] $U(ab)=U(a)b$,
	\item [(ii)] $V(ab)=aV(b)$,
	\item [(iii)] $V(a)b=aU(b)$. 
\end{itemize} 
for all $a,b\in A$, endowed with the multiplication given by the rule $(U,V)(U',V')=(U\circ U',V'\circ V).$ Such an algebra is associative with identity element given by the pair $1=(\imath,\imath)$ where $\imath$ denotes the identity map of $A$. Furthermore, there exists a canonical algebra homomorphism $\jmath:A\to M(A)$ given by $a\mapsto (U_a,V_a)$, where $U_a$ (resp., $V_a$) denotes the left (resp., right) multiplication by $a$, for all $a\in A$. If, in particular, $\jmath$ is injective we say that the product in $A$ is \emph{nondegenerate}. In this case, $A$ is unital if and only if $\jmath$ is an isomorphism. Furthermore, every element $x$ of $M(A)$ will be denoted by the pair $(\overline{x}, \overline{\overline{x}})$ and the expression $\overline{x}(a)=xa$ (resp., $\overline{\overline{x}}(a)=ax$) will be seen as the product $xa$ (resp., $ax$) in $M(A)$, for all $a \in A$. As the product in $A$ is nondegenerate, the linear map $\overline{x}$ is univocally determinate by the linear map $\overline{\overline{x}}$ and conversely. Moreover,  a pair $(\overline{x}, \overline{\overline{x}})$ of linear maps from $A$ into $A$ lies in $M(A)$ if and only if the following compatibility relation $a\overline{x}(b)=\overline{\overline{x}}(a)b$ holds, for all $a,b \in A$.

Let $A$ be an algebra over a field $\Bbbk$ with a nondegenerate product. A \emph{comultiplication (or coproduct)} is an algebra homomorphism $\Delta: A\longrightarrow M(A\otimes A)$
satisfying
\begin{center}
	$\Delta(a)(1\otimes b)\in A\otimes A$\ \ \ \ \ and \ \ \ \ \  $(a\otimes 1)\Delta(b)\in A\otimes A$
\end{center}
and the co-associativity property 
\begin{center}
	$(a\otimes 1\otimes 1)((\Delta\otimes \imath)(\Delta(b)(1\otimes c)))=((\imath\otimes\Delta)((a\otimes 1)\Delta(b)))(1\otimes 1\otimes c),$
\end{center}
for all $a, b$,$c$ in $A$. A pair $(A,\Delta)$ is called a \textit{multiplier Hopf algebra} if $\Delta$ is a comultiplication and the linear maps 
\begin{eqnarray*}
	T_1: A\otimes A &\longrightarrow &A\otimes A \quad \ \ \ \ \ \ \ \ \ \mbox{and} \ \ \ \ \ \ \ \ \ \quad  T_2:A\otimes A \longrightarrow A\otimes A\\
	\quad \ \ \ \ \ \ a\otimes b &\longmapsto &\Delta(a)(1\otimes b) \ \ \ \ \ \ \ \  \ \ \ \ \ \ \ \ \ \ \ \ \ \ \quad a\otimes b \longmapsto (a\otimes 1)\Delta(b)
\end{eqnarray*}
are bijective.

Due to the surjectivity of these two maps it is possible to show that there exist a unique algebra homomorphism $\varepsilon :A\longrightarrow\Bbbk$ such that

\begin{center}
	$(\varepsilon\otimes \imath)(\Delta(a)(1\otimes b))=ab$ \ \ \ \ \ and \ \ \ \ \ $(\imath\otimes\varepsilon)((a\otimes 1)\Delta(b))=ab$
\end{center}
and a unique algebra anti-homomorphism $S: A\longrightarrow M(A)$ such that
\begin{center}
	$m(S\otimes \imath)(\Delta(a)(1\otimes b))=\varepsilon(a)b$ \ \ \ \ \ and \ \ \ \ \ $m(\imath\otimes S)((a\otimes 1)\Delta(b))=\varepsilon(b)a,$
\end{center}
for all $a,b$ in $A$.
Such maps are respectively called the \emph{counit} and the \emph{antipode} of $A$.
In particular, if $A$ is unital we recover the classical definition of a Hopf algebra.

Besides that, a multiplier Hopf algebra $(A,\Delta)$ is called \emph{regular} if $(A,\sigma\Delta)$  is also a multiplier Hopf algebra, where $\sigma$ denotes the canonical flip map. For regular multiplier Hopf algebras the antipode satisfies $S(A)= A$.

The motivation for the  concept of multiplier Hopf algebra arose from the algebra $A_G$, with pointwise product, of the complex functions with finite support on a group $G$, i. e., functions that assume nonzero values for a finite set of elements of $G$. In this case, the multiplier algebra $M(A_G)$ consists of all complex functions on $G$. Moreover, $A_G \otimes A_G$ can be naturally identified with the complex functions with finite support on $G\times G$. Then $A_G$ is a multiplier Hopf algebra with comultiplication $\Delta: A_G \longrightarrow M(A_{G\times G})$ given by $\Delta(f)(p,q)=f(pq)$, counit $\varepsilon:A_G\to \mathbb{C}$ given by $\varepsilon(f)=f(1_G)$ and antipode $S:A_G\to M(A_G)$ given by $(S(f))(p)=f(p^{-1})$, for all $f\in A_G$ and $p,q\in G$.

An important result, that appeared in \cite{Sweedler}, is the existence of bilateral local units for a multiplier Hopf algebra $(A, \Delta)$, that is, for any given finite set of elements $a_1, \ldots , a_n$ of $A$, there exists an element $e\in A$ such that $e a_i = a_i = a_i e$, for all $1 \leq i \leq n$. Such a fact was used to justify the Sweedler's notation in this context. Indeed, $\Delta(a)(1 \otimes b)$ can be written as $a_{(1)} \otimes a_{(2)}b$ since there is a local unit $e$ for the element $ b $, thus, $\Delta(a)(1 \otimes b)= \Delta(a)(1 \otimes e)(1 \otimes b)=a_{(1)} \otimes a_{(2)}b$. In this case, we say that the second entries $a_{(2)}$ is covered by $b.$ Another important consequence is that $A^2=A$, which allowed to show that the comultiplication $\Delta$ is a nondegenerate algebra homomorphism (cf.  \cite[Appendix]{Multiplier}).
 
One of the main differences between Hopf algebras and multiplier Hopf algebras is related to the linear duality. In Hopf algebra case, one deals with the finite dual Hopf algebra. By the way, the finite dual can be very small, for example, in the case of simple Hopf algebras. For multiplier Hopf algebras, the construction of the dual depends on the existence of integrals \cite{Galois}. 

 Given a regular multiplier Hopf algebra $(A, \Delta)$, a linear functional $\varphi$ on $A$ is called a left integral if $(\imath\otimes\varphi)\Delta(a)=\varphi(a)$, for all $a\in A$. Define the dual algebra 
  \begin{eqnarray}\label{achapeu}
 \widehat{A}=\{\varphi(\underline{\hspace{0.3cm}} a); \  a\in A, \varphi\ \text{is a left  integral} \} ,
 \end{eqnarray}
 whose product and coproduct are defined as follows: For $u, w\in \widehat{A}$, their product is given by
 \begin{center}
	$(wu)(a)=(w\otimes u)\Delta(a)$,	
\end{center} 
and the coproduct, for any $w\in \widehat{A}$
\begin{eqnarray}
\label{deltadual1} (\widehat{\Delta}(w)(1\otimes u))(a\otimes b) &=& (w\otimes u)((a\otimes 1)\Delta(b)),\\
\label{deltadual2} ((u\otimes 1)\widehat{\Delta}(w))(a\otimes b) &=& (u\otimes w)(\Delta(a)(1\otimes b)),
\end{eqnarray} 
for all $a,b\in A$. $(\widehat{A}, \widehat{\Delta})$ is indeed an example of a multiplier Hopf algebra with integrals and $ \widehat{\widehat{A} \ }\cong A$.  Throughout the text, unless otherwise stated, we will use $\widehat{a}=\varphi(\underline{\hspace{0.3cm}} a)$ to not overload the notation.

The following identites involving the elements of the dual multiplier Hopf algebra $\widehat{A}$ will be useful throughout the text:
\begin{eqnarray}
\label{escritadelta1} \widehat{\Delta} (\varphi (\underline{\hspace{0.3cm}} a)) (1\otimes \varphi (\underline{\hspace{0.3cm}} b)) & =& \varphi (\underline{\hspace{0.3cm}} S^{-1}(b_{(1)})a) \otimes \varphi (\underline{\hspace{0.3cm}} b_{(2)}) ,\\
\label{escritadelta2} (\imath \otimes \widehat{S})((1\otimes \widehat{S^{-1}}(\varphi (\underline{\hspace{0.3cm}} b)))\widehat{\Delta} (\varphi (\underline{\hspace{0.3cm}} a))  ) & =& \varphi (\underline{\hspace{0.3cm}} b_{(1)}a) \otimes \varphi (\underline{\hspace{0.3cm}} b_{(2)}) .
\end{eqnarray}
  
For completeness sake, we give a sketch of the proof of identity (\ref{escritadelta1}), since these auxiliary identities were obtained by the authors and do not appear anywhere else in the literature on multiplier Hopf algebras. Indeed, given $\varphi (\underline{\hspace{0.3cm}} a), \varphi (\underline{\hspace{0.3cm}} b)\in\widehat{A}$ and $c,d\in A$,
\begin{eqnarray*}
	\widehat{\Delta}(\varphi (\underline{\hspace{0.3cm}} a))(1\otimes\varphi (\underline{\hspace{0.3cm}} b))(c\otimes d)&\stackrel{(\ref{deltadual1})}{=}& (\varphi (\underline{\hspace{0.3cm}} a)\otimes\varphi (\underline{\hspace{0.3cm}} b))((c\otimes 1)\Delta(d))\\
	&=&(\varphi\otimes\varphi)((c\otimes 1)\Delta(d)(a\otimes b))\\
	&=& (\varphi\otimes\varphi)((c\otimes 1)\Delta(d)\Delta(b_2)(S^{-1}(b_1)a\otimes 1))\\
	&=& \varphi(c(db_2)_1S^{-1}(b_1)a)\varphi((db_2)_2)\\
	&\stackrel{(\ast)}{=}& \varphi(cS^{-1}(b_1)a)\varphi(db_2)\\
	&=&(\varphi(\underline{\hspace{0.3cm}}\ S^{-1}(b_1)a)\otimes \varphi(\underline{\hspace{0.3cm}} \ b_2))(c\otimes d),
\end{eqnarray*}
where in $(\ast)$ we used the left invariance of the integral $\varphi$.
\vu

For example, $\widehat{\Bbbk G} \cong A_G$ given by
\begin{eqnarray*}
\theta: \widehat{\Bbbk G} &\longrightarrow & A_G\\
\varphi(\underline{\hspace{0.3cm}} a)&\longmapsto & \theta(\varphi(\underline{\hspace{0.3cm}} a))(h)=\varphi(\delta_ha),
\end{eqnarray*}
where $a=\displaystyle\sum_{g\in G}a_g\delta_g$, i. e., $\theta(\varphi(\underline{\hspace{0.3cm}} a))(h)=\displaystyle\sum_{g\in G}a_g\varphi(\delta_hg)=a_{h^{-1}}$. Moreover, $\widehat{A_G}\cong \Bbbk G$.

 \section{Partial coactions}

\subsection{Global coactions}

In this section we recall the definition of comodule algebra and some properties, as in \cite{Galois}.

\begin{defi} Let $A$ be a regular multiplier Hopf algebra and $R$ an algebra. We call $R$ a \textit{right $A$-comodule algebra} if there exists an injective homomorphism $\rho : R\longrightarrow M(R\otimes A)$ satisfying
		\begin{enumerate}
		\item[(i)] $\rho(R)(1\otimes A)\subseteq R\otimes A$ and $(1\otimes A)\rho(R)\subseteq R\otimes A$;
		\item[(ii)] $(\rho\otimes \imath)\rho=(\imath\otimes \Delta)\rho$.
	\end{enumerate}
	
	In this case, the map $\rho$ is called a coaction of $A$ on $R$. We say that a coaction $\rho$ is \textit{reduced}, if  $(R\otimes 1)\rho(R)\subseteq R\otimes A$ also holds.
	\label{defcoacao}
\end{defi}

\begin{rem} Using (i), the co-associativity in (ii) can be viewed as follows:
	\begin{eqnarray*}
		(\rho\otimes \imath)(\rho(x)(1\otimes b))=(\imath\otimes\Delta)(\rho(x))(1\otimes 1\otimes b),
	\end{eqnarray*}
	for all $x\in R$ and $b\in A$.
\end{rem}

\begin{pro} If $R$ is a right $A$-comodule algebra via $\rho$, then $(\imath\otimes\varepsilon)\rho(x)=x$, for all $x\in R$.
\end{pro}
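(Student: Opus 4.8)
The plan is to show that the linear map $E:=(\imath\otimes\varepsilon)\rho$ satisfies $\rho\circ E=\rho$; since $\rho$ is injective by hypothesis, this immediately forces $E=\imath$, which is exactly the assertion. The advantage of this reformulation is that it reduces the counit identity to the coassociativity condition (ii) together with the counit axiom $(\imath\otimes\varepsilon)\Delta=\imath$ of $A$, and it lets me avoid controlling $E(x)$ directly inside $M(R)$.

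First I would apply $\imath\otimes\imath\otimes\varepsilon$, that is, evaluate the counit on the \emph{third} tensor leg, to both sides of the coassociativity identity $(\rho\otimes\imath)\rho(x)=(\imath\otimes\Delta)\rho(x)$. Writing $\rho(x)=x_{(0)}\otimes x_{(1)}$ in the Sweedler-type notation justified in the preliminaries, the left-hand side reads $x_{(0)(0)}\otimes x_{(0)(1)}\otimes x_{(1)}$, and applying $\varepsilon$ to the last factor yields $\sum\varepsilon(x_{(1)})\rho(x_{(0)})=\rho\big((\imath\otimes\varepsilon)\rho(x)\big)=\rho(E(x))$, using only the linearity of $\rho$. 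On the right-hand side $(\imath\otimes\Delta)\rho(x)=x_{(0)}\otimes x_{(1)(1)}\otimes x_{(1)(2)}$, and applying $\varepsilon$ to the last leg gives $x_{(0)}\otimes x_{(1)(1)}\varepsilon(x_{(1)(2)})=x_{(0)}\otimes x_{(1)}=\rho(x)$, by the counit axiom $(\imath\otimes\varepsilon)\Delta=\imath$ on $A$. Hence $\rho(E(x))=\rho(x)$, and injectivity of $\rho$ delivers $(\imath\otimes\varepsilon)\rho(x)=E(x)=x$.

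The step requiring genuine care, and the main obstacle, is that $\rho(x)$ is only a multiplier in $M(R\otimes A)$, so neither $(\imath\otimes\varepsilon)\rho(x)$ nor the application of $\imath\otimes\imath\otimes\varepsilon$ to the coassociativity identity is literally defined; each must be read through the covering supplied by axiom (i). Concretely, I would fix $b\in A$ and work with the genuine elements $\rho(x)(1\otimes b)\in R\otimes A$ and $(\imath\otimes\Delta)(\rho(x))(1\otimes1\otimes b)\in R\otimes A\otimes A$, as in the Remark, so that every leg on which $\varepsilon$ is evaluated actually lies in $A$. Carrying the scalar $\varepsilon(b)$ through the computation produces $\varepsilon(b)\,\rho(E(x))=\varepsilon(b)\,\rho(x)$, and choosing $b$ with $\varepsilon(b)\neq0$, which exists because $\varepsilon$ is a nonzero homomorphism (otherwise $ab=(\varepsilon\otimes\imath)(\Delta(a)(1\otimes b))=0$ would contradict nondegeneracy of the product on $A$), cancels the scalar and returns $\rho(E(x))=\rho(x)$. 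The same covering simultaneously shows that $E(x)$ is a well-defined element of $R$. Note that, perhaps surprisingly, the multiplicativity of $\rho$ is not needed here: the counit property rests only on coassociativity, injectivity, and the counit axiom of $A$.
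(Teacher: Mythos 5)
Your argument is correct and is essentially the paper's own approach: the paper states this global proposition without proof (citing \cite{Galois}), but its proof of the partial analogue, Proposition \ref{pro_coavarepsi}, is exactly your computation — apply $\imath\otimes\imath\otimes\varepsilon$ to the covered coassociativity identity $(\rho\otimes\imath)(\rho(x)(1\otimes b))=(\imath\otimes\Delta)(\rho(x))(1\otimes 1\otimes b)$, use the counit axiom on the last leg, and invoke injectivity of $\rho$. Your handling of the covering by $b$ with $\varepsilon(b)\neq 0$ matches the paper's choice of $b$ with $\varepsilon(b)=1_{\Bbbk}$.
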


\begin{pro} If $R$ is a right $A$-comodule algebra via $\rho$, then the linear maps
	\begin{eqnarray*}
		T_1: R\otimes A &\longrightarrow & R\otimes A\ \ \ \ \ \ \ \ \ \ \mbox{and}\ \ \ \ \ \ \ \ \ \ T_2: R\otimes A\longrightarrow R\otimes A\\
		x\otimes a &\longmapsto &\rho(x)(1\otimes a) \ \ \ \ \ \ \ \ \ \ \ \ \ \ \ \ \ \ \ \ \ x\otimes a\longmapsto (1\otimes a)\rho(x)
	\end{eqnarray*}
	are bijective.
	\label{procoacaobij}
\end{pro}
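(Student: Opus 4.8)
The plan is to construct explicit two-sided inverses for $T_1$ and $T_2$, built from the antipode $S$ of $A$, and to verify the inverse identities using the coassociativity of $\rho$ (in the covered form of the Remark), the counit property $(\imath\otimes\varepsilon)\rho(x)=x$ of the previous Proposition, and the defining relations of the antipode and counit of $A$. Writing $\rho(x)=x_{(0)}\otimes x_{(1)}$ in Sweedler notation (with the proviso that the second leg must be covered), the natural candidates are
\[
T_1^{-1}(y\otimes a)=y_{(0)}\otimes S(y_{(1)})a \qquad\text{and}\qquad T_2^{-1}(y\otimes a)=y_{(0)}\otimes a\,S^{-1}(y_{(1)}),
\]
where $S^{-1}$ is available because $A$ is regular.

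First I would settle the well-definedness of these candidates, which is the only genuinely delicate point. The expression $y_{(0)}\otimes S(y_{(1)})a$ is not manifestly an element of $R\otimes A$, since $\rho(y)$ only lives in $M(R\otimes A)$. The remedy is to cover the leg on which $S$ acts using condition (i) of Definition \ref{defcoacao}. Concretely, $(1\otimes S^{-1}(a))\rho(y)=y_{(0)}\otimes S^{-1}(a)y_{(1)}$ lies in $R\otimes A$ by $(1\otimes A)\rho(R)\subseteq R\otimes A$, and applying $\imath\otimes S$ (which preserves $A$ because $S(A)=A$ for regular $A$) yields exactly $y_{(0)}\otimes S(y_{(1)})a$; thus $T_1^{-1}(y\otimes a)=(\imath\otimes S)\big((1\otimes S^{-1}(a))\rho(y)\big)\in R\otimes A$. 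Symmetrically, $T_2^{-1}(y\otimes a)=(\imath\otimes S^{-1})\big(\rho(y)(1\otimes S(a))\big)$, which is well-defined by $\rho(R)(1\otimes A)\subseteq R\otimes A$. This is where regularity of $A$ (bijectivity of $S$ and $S(A)=A$) is essential, and it is the step I expect to be the main obstacle.

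Once well-definedness is secured, the verification that these are two-sided inverses is a routine Sweedler computation. For $T_1^{-1}T_1=\imath$ I would compute $T_1^{-1}(\rho(x)(1\otimes a))=(x_{(0)})_{(0)}\otimes S((x_{(0)})_{(1)})x_{(1)}a$, rewrite the first two legs by coassociativity $(\rho\otimes\imath)\rho=(\imath\otimes\Delta)\rho$ as $x_{(0)}\otimes S((x_{(1)})_{(1)})(x_{(1)})_{(2)}a$, collapse $S((x_{(1)})_{(1)})(x_{(1)})_{(2)}=\varepsilon(x_{(1)})1$ via the antipode relation $m(S\otimes\imath)(\Delta(c)(1\otimes d))=\varepsilon(c)d$, and finish with the counit property to get $x_{(0)}\varepsilon(x_{(1)})\otimes a=x\otimes a$. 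The identity $T_1T_1^{-1}=\imath$ is obtained the same way using the companion relation $c_{(1)}S(c_{(2)})=\varepsilon(c)1$, and the two identities for $T_2$ are entirely analogous with $S$ replaced by $S^{-1}$. Throughout, I would keep the relevant leg covered (by $a$, or by a local unit absorbed into $a$) so that every Sweedler expression denotes an honest element of $R\otimes A$ rather than a formal multiplier, which keeps the manipulations legitimate in the non-unital setting.
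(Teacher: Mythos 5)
The paper does not actually prove Proposition \ref{procoacaobij}: it is stated without proof in the preliminary subsection on global coactions, being recalled from \cite{Galois}. Your argument is correct and is essentially the standard one from that reference: the inverses are $T_1^{-1}=(\imath\otimes S)\circ T_2\circ(\imath\otimes S^{-1})$ and $T_2^{-1}=(\imath\otimes S^{-1})\circ T_1\circ(\imath\otimes S)$ (which, being composites of linear maps, are automatically well defined, exactly as you argue via covering), and the inverse identities follow from coassociativity, the counit property, and the antipode relations, with regularity of $A$ supplying $S^{-1}$ and $S(A)=A$.
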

These above bijections imply $\rho(R)(1\otimes A)=R\otimes A=(1\otimes A)\rho(R)$. Hence,
\begin{center}
	$\rho(R)(R\otimes A)=R^2\otimes A=(R\otimes A)\rho(R)$,
\end{center}
what means that $\rho$ is a nondegenerate homomorphism if $R^2=R$.

Although the coaction $\rho$ is not a nondegenerate homomorphism, it can be uniquely extended to $ M (R) $, using the bijectivity of the linear maps $ T_1 $ and $ T_2 $ as follows:
\begin{eqnarray}
	\rho: M(R) &\longrightarrow & M(R\otimes A)\label{coaglobal}\\
	m&\longmapsto & \rho(m)=(\overline{\rho(m)},\overline{\overline{\rho(m)}})\nonumber
\end{eqnarray}
such that $\overline{\rho(m)}(\rho(x)(1\otimes a))=\rho(mx)(1\otimes a)$ and $\overline{\overline{\rho(m)}}((1\otimes a)\rho(x))=(1\otimes a)\rho(xm)$, for all $x\in R$ and $a\in A$. 

\begin{lem} If $R$ is a reduced right $A$-comodule algebra, then  $\rho(R)(R\otimes 1)\subseteq R\otimes A$.
\end{lem}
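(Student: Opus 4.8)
The plan is to fix $x,y\in R$ and analyse the single multiplier $m=\rho(x)(y\otimes 1)\in M(R\otimes A)$, showing that it is ``covered'' on both tensor legs and therefore lies in $R\otimes A$. Concretely, I would establish that the one-sided products of $m$ by the elementary multipliers $1\otimes a$ (for $a\in A$) and $z\otimes 1$ (for $z\in R$) are genuine elements of $R\otimes A$, and then invoke the standard characterisation of $R\otimes A$ inside $M(R\otimes A)$ as the multipliers all of whose such covers are genuine, using that $A$ has local units and $R$ is nondegenerate.

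First I would record the covers on the $A$-leg. Since $\rho$ is an algebra homomorphism and, by Proposition~\ref{procoacaobij}, $\rho(R)(1\otimes A)=R\otimes A$, the identity $\rho(R)(R\otimes A)=R^2\otimes A\subseteq R\otimes A$ established just after that proposition already yields $m(1\otimes a)=\rho(x)(y\otimes a)\in R\otimes A$ for every $a\in A$; explicitly, writing $y\otimes a=\sum_i\rho(z_i)(1\otimes b_i)$ one gets $\rho(x)(y\otimes a)=\sum_i\rho(xz_i)(1\otimes b_i)$. Dually, condition (i) of Definition~\ref{defcoacao} gives $(1\otimes a)\rho(x)\in R\otimes A$, whence $(1\otimes a)m\in R\otimes A$. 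Thus both $A$-leg covers of $m$ are genuine, and notably neither of these two facts uses reducedness.

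The essential point, and the only place where the reducedness hypothesis enters, is the left cover on the $R$-leg. Here I would use that $\rho$ is reduced, i.e. $(R\otimes 1)\rho(R)\subseteq R\otimes A$: for $z\in R$ write $(z\otimes 1)\rho(x)=\sum_k u_k\otimes v_k\in R\otimes A$, so that $(z\otimes 1)m=\sum_k u_ky\otimes v_k\in R\otimes A$. It is worth stressing that the symmetric right cover $m(z\otimes 1)=\rho(x)(yz\otimes 1)$ is again an expression of the type under study and cannot be controlled directly; this asymmetry is exactly what makes reducedness (a left-sided condition) indispensable and rules out any naive symmetry or antipode argument, since passing from $(y\otimes 1)\rho(x)=yx_{(0)}\otimes x_{(1)}$ to $m=x_{(0)}y\otimes x_{(1)}$ would have to reverse the order of the factors in the first leg, which manipulations of the $A$-leg alone cannot do.

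Finally I would assemble the three covers: the two $A$-leg covers $m(1\otimes a),(1\otimes a)m\in R\otimes A$ together with the $R$-leg cover $(R\otimes 1)m\subseteq R\otimes A$. I expect the main obstacle to be precisely this last assembly, namely turning the covering data into the membership $m\in R\otimes A$. This rests on the multiplier characterisation of $R\otimes A$: a multiplier of $R\otimes A$ that is covered on both legs in the above sense is a genuine element, an argument that proceeds through the compatibility relation $a\,\overline{x}(b)=\overline{\overline{x}}(a)\,b$ for multipliers, the local units of $A$ absorbing the finitely many second-leg components produced by each cover, and the nondegeneracy of $R$ to place the result in $R\otimes A$ rather than merely in $M(R)\otimes A$. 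Making this covering argument fully rigorous, rather than the routine computations of the covers themselves, is where the real work of the lemma lies.
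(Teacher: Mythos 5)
Your computations of the three covers are all correct, and you are right that reducedness enters only through $(z\otimes 1)m\in R\otimes A$. The proof collapses, however, at exactly the step you flag as the ``real work'': the characterisation you want to invoke --- that a multiplier of $R\otimes A$ whose one-sided products with the elements $1\otimes a$ and $z\otimes 1$ all land in $R\otimes A$ must itself lie in $R\otimes A$ --- is false. The canonical counterexample is the comultiplication itself: for a nonunital multiplier Hopf algebra such as $A_G$ with $G$ infinite, $\Delta(a)\in M(A\otimes A)$ satisfies $\Delta(a)(1\otimes b)$, $(1\otimes b)\Delta(a)$, $(b\otimes 1)\Delta(a)$, $\Delta(a)(b\otimes 1)\in A\otimes A$ for all $b$, yet $\Delta(\delta_g)$ is the function $(p,q)\mapsto\delta_g(pq)$, whose support in $G\times G$ is infinite, so it does not lie in $A_G\otimes A_G$. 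Being covered on every leg never forces membership in a nonunital algebra (compare $1_{M(B)}\notin B$ even though $1_{M(B)}b\in B$ for all $b\in B$), so no amount of care with local units and nondegeneracy can close this gap.

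The argument that does work --- and the one the paper carries out in detail for the partial analogue of this lemma, namely the proposition asserting $\rho(R)(R\otimes 1)\subseteq E(R\otimes A)$ for reduced partial coactions --- is precisely the antipode argument you dismissed. One proves the explicit identity $\rho(x)(y\otimes 1)=\rho(xy^{(0)})(1\otimes S(y^{(1)}))$, whose right-hand side is a finite sum lying in $\rho(R)(1\otimes A)=R\otimes A$ by Proposition \ref{procoacaobij}. The computation inserts $\varepsilon(b_{(2)}y^{(1)})$, rewrites the counit via $m(\imath\otimes S)((c\otimes 1)\Delta(d))=\varepsilon(d)c$, and uses the coassociativity in the form $(\rho\otimes\imath)((1\otimes b)\rho(y))=(1\otimes 1\otimes b)(\imath\otimes\Delta)(\rho(y))$; reducedness is what makes $(x\otimes 1)\rho(y)=xy^{(0)}\otimes y^{(1)}$ a genuine finite sum of elementary tensors to which $\rho\otimes S$ can then be applied. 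Your objection to this route is misplaced: no reversal of factors in the first leg is needed, because the Sweedler legs that appear are those of $y$, not of $x$, and $y^{(0)}$ ends up multiplying $x$ on the right inside $\rho$.
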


\subsection{Partial coactions}

From now on, the partial coactions of $A$ on $R$ will be always consider on the right. The partial coactions on the left are defined in a similar way. Firstly, we recall the definition of a partial coaction  when $A$ and $R$ are unital. 

\begin{defi} \label{right_part_comod_alg} \cite{Marcelo} Let $A$ be a Hopf algebra. An algebra $R$ is a \textit{partial $A$-comodule algebra} if there exists a linear map
	\begin{eqnarray*}
		\rho: R &\longrightarrow & R \otimes A\\
		x & \longmapsto & x^{(0)} \otimes x^{(1)}
	\end{eqnarray*}
such that 
	\begin{enumerate}
		\item[(i)] $\rho(xy)= \rho(x)\rho(y)$;
		\vu
		\item [(ii)] $(\imath\otimes \varepsilon)\rho(x)=x$;
		\vu
		\item [(iii)] $(\rho \otimes \imath)\rho(x)=(\rho(1_R)\otimes 1_A)(\imath\otimes \Delta)\rho(x)$,
	\end{enumerate} for all $x,y\in R$. In this case, $\rho$ is called a \emph{partial coaction} of $A$ on $R$.
\end{defi} 

The partial coaction $\rho$ is called \textit{symmetric} if, in addition, satisfies:
 \begin{enumerate}
 	\item[(iv)]$(\rho\otimes\imath)\rho(x)= (\imath\otimes \Delta)\rho(x)(\rho(1_R)\otimes 1_A)$, for all $x\in R$.
\end{enumerate}

\vu
For the context where both algebras are nonunital, clearly the items (iii) and (iv) do not make sense anymore. In reference \cite{Batista}, the authors proved that the image of the partial coaction lies in a direct summand of the tensor product $R\otimes A$ and the projection is given by $\rho (1_R)$. The first idea to overcome this problem in the nonunital case is to look for a new projection  playing the same role of $\rho (1_R)$. Inspired by \cite{Fraca0}, in a similar problem for the context of weak multiplier Hopf algebras, where there is an idempotent $E\in M(A\otimes A)$ which coincides with $\Delta (1_{M(A)})$ when the comultiplication is extended to $M(A)$, we define an idempotent $E\in M(R\otimes A)$ which will coincide with $\rho (1_{M(R)})$ when the coaction is extended to $M(R)$.

\begin{defi} Let $A$ be a regular multiplier Hopf algebra and $R$ an algebra with a nondegenerate product. We call $(R, \rho, E)$ (or simply $R$) a \emph{partial $A$-comodule algebra} if $\rho: R \longrightarrow M(R\otimes A)$ is an injective algebra homomorphism  and $E\in M(R\otimes A)$ is an idempotent element, satisfying  
 	\begin{enumerate}
 		\item[(i)] $(1\otimes A)E\subseteq M(R)\otimes A$ and $E(1\otimes A)\subseteq M(R)\otimes A$;
 		\vu
 		\item[(ii)] $\rho(R)(1\otimes A)\subseteq E(R\otimes A)$ and $(1\otimes A)\rho(R)\subseteq(R\otimes A)E$; 
 		\vu
 		\item[(iii)] $(\rho\otimes \imath)(\rho(x)) = (E\otimes 1)(\imath\otimes \Delta)(\rho(x))$,
 	 	\end{enumerate} 
for all $x\in R$. In this case, $\rho$ is called a \emph{partial coaction} of $A$ on $R$. We say that the partial coaction $\rho$ is \emph{symmetric} if, besides the above conditions,  $\rho$ also satisfies
 \begin{enumerate}
 	\item[(iv)] $(\rho \otimes \imath)(\rho(x))=(\imath \otimes \Delta)(\rho(x))(E\otimes 1)$,  for all $x\in R$.
 \end{enumerate}
 \label{def_comoalgparcarcial_multip} 	
 	   \end{defi}

 Similarly to the global case, we use the condition (ii) to rewrite the other ones as follows: 
 \begin{eqnarray}
(\rho \otimes \imath)(\rho(x)(1\otimes b))&=&(E\otimes 1)(\imath\otimes \Delta)(\rho(x))(1\otimes 1\otimes b)\label{escritacoa1},\\
 (\rho \otimes \imath)((1\otimes b)\rho(x))&=&(1\otimes 1\otimes b)(E\otimes 1)(\imath\otimes \Delta)(\rho(x))\label{escritacoa2},\\
(\rho \otimes \imath)(\rho(x)(1\otimes b)) &=& (\imath\otimes \Delta)(\rho(x))(E\otimes 1)(1\otimes 1 \otimes b)\label{escritacoa3}\\
 (\rho \otimes \imath)((1\otimes b)\rho(x)) &=& (1\otimes 1 \otimes b)(\imath\otimes \Delta)(\rho(x))(E\otimes 1)\label{escritacoa4}.
 \end{eqnarray}

\begin{rem} Every $A$-comodule algebra $R$ is a symmetric partial comodule algebra, taking the  idempotent $E=1_{M(R)}\otimes 1_{M(A)}$.
\end{rem}

\begin{lem} Let $(R,\rho,E)$ be a partial comodule algebra. Then
\begin{equation}
E\rho(x)=\rho(x) \ \ \mbox{and} \ \ \rho(x)E=\rho(x),
\end{equation}
for all $x\in R$.
\label{rhoigual_rhoe}
\label{lema_rhoigroe}
\end{lem}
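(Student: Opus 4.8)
The plan is to prove both identities by reducing the equality of multipliers in $M(R\otimes A)$ to their action on genuine elements of $R\otimes A$, exploiting condition (ii) together with the idempotency $E^2=E$. The key point is that although $\rho(x)$ is only a multiplier, condition (ii) guarantees that $\rho(x)(1\otimes a)$ and $(1\otimes a)\rho(x)$ are honest elements of $R\otimes A$ on which $E$ can act, and this is exactly where the idempotent absorbs $E$.

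For the first identity I would start from the inclusion $\rho(R)(1\otimes A)\subseteq E(R\otimes A)$ in (ii): given $x\in R$ and $a\in A$, write $\rho(x)(1\otimes a)=Ew$ for some $w\in R\otimes A$. Multiplying on the left by $E$ and using $E^2=E$ yields
\[
E\,\big(\rho(x)(1\otimes a)\big)=E^2 w=Ew=\rho(x)(1\otimes a),
\]
that is, $(E\rho(x))(1\otimes a)=\rho(x)(1\otimes a)$ for every $a\in A$. To promote this to the equality of multipliers $E\rho(x)=\rho(x)$, I would test against an arbitrary element $r\otimes a\in R\otimes A$ by factoring $r\otimes a=(1\otimes a)(r\otimes 1)$; then
\[
(E\rho(x))(r\otimes a)=\big((E\rho(x))(1\otimes a)\big)(r\otimes 1)=\big(\rho(x)(1\otimes a)\big)(r\otimes 1)=\rho(x)(r\otimes a).
\]
Since the left multiplications of $E\rho(x)$ and $\rho(x)$ agree on all of $R\otimes A$ and the product of $R\otimes A$ is nondegenerate, the two multipliers coincide.

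The second identity $\rho(x)E=\rho(x)$ is entirely dual: I would use the other inclusion $(1\otimes A)\rho(R)\subseteq(R\otimes A)E$ of (ii) to write $(1\otimes a)\rho(x)=wE$, apply $E^2=E$ on the right to obtain $\big((1\otimes a)\rho(x)\big)E=(1\otimes a)\rho(x)$, and then recover the equality of the right multiplications by testing against $r\otimes a=(r\otimes 1)(1\otimes a)$. Nondegeneracy of $R\otimes A$ again upgrades this to $\rho(x)E=\rho(x)$.

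The only genuine subtlety, rather than a true obstacle, is the bookkeeping forced by working with multipliers instead of elements: one must be careful to apply $E$ only to the covered expressions $\rho(x)(1\otimes a)$ and $(1\otimes a)\rho(x)$, which conditions (ii) place inside $R\otimes A$, and then reconstruct the multiplier identities from their one-sided actions via nondegeneracy. It is worth noting that neither the symmetry condition (iv) nor condition (iii) is needed; the statement follows from the idempotency of $E$ and the covering inclusions in (ii) alone.
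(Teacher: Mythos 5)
Your proof is correct and follows essentially the same route as the paper: the paper also writes a general element $y\otimes a$ as $(1\otimes a)(y\otimes 1)$, uses condition (ii) to land $\rho(x)(1\otimes a)$ inside $E(R\otimes A)$, and absorbs $E$ via $E^2=E$. Your version merely makes the multiplier bookkeeping and the final appeal to nondegeneracy of the product in $R\otimes A$ more explicit, which is a fair elaboration rather than a different argument.
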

\begin{proof}
By assumption $\rho(R)(R\otimes A)=\rho(R)(1\otimes A)(R\otimes 1)\subseteq E(R\otimes A)$, thus
\begin{eqnarray*}
\rho(x)(y\otimes a)&=&E(\sum_i z_i\otimes b_i)\\
&= & EE(\sum_i z_i\otimes b_i)\\
&=&E\rho(x)(y\otimes a),
\end{eqnarray*}
for all $x,y\in R$ and $a\in A$. Therefore, $\rho(x)=E\rho(x)$, for all $x\in R$. Similarly, $\rho(x)=\rho(x)E$, for all $x\in R$.
\end{proof}

   \begin{pro} Let $(R,\rho,E)$ be a partial $A$-comodule algebra. Then $R$ is an $A$-comodule algebra via $\rho$ if and only if $E=1_{M(R)}\otimes 1_{M(A)}$.
\label{caract_comod}
\end{pro}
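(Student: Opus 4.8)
The plan is to prove the biconditional in two directions, where the reverse implication is immediate and the forward implication carries the real content. For the reverse direction, if $E=1_{M(R)}\otimes 1_{M(A)}$, then condition (iii) of Definition \ref{def_comoalgparcarcial_multip} becomes $(\rho\otimes\imath)(\rho(x))=(\imath\otimes\Delta)(\rho(x))$, which is precisely the co-associativity condition (ii) of Definition \ref{defcoacao}; conditions (i) and (ii) of the partial definition reduce directly to condition (i) of the global definition, and $\rho$ is already assumed to be an injective algebra homomorphism. Hence $R$ is an $A$-comodule algebra via $\rho$.

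For the forward implication, I would assume $R$ is a (global) $A$-comodule algebra via $\rho$ and aim to show $E=1_{M(R)}\otimes 1_{M(A)}$. Since elements of $M(R\otimes A)$ are determined by how they multiply against $R\otimes A$, it suffices to show $E(y\otimes a)=y\otimes a$ for all $y\in R$, $a\in A$ (together with the symmetric statement on the other side). The key tool is Lemma \ref{rhoigual_rhoe}, which gives $E\rho(x)=\rho(x)$, combined with the global surjectivity furnished by Proposition \ref{procoacaobij}, namely $\rho(R)(1\otimes A)=R\otimes A$. Concretely, any $y\otimes a\in R\otimes A$ can be written as a finite sum of terms of the form $\rho(x)(1\otimes b)$; applying $E$ on the left and using $E\rho(x)=\rho(x)$ yields $E(y\otimes a)=y\otimes a$. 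This forces $\overline{E}=\imath$ on $R\otimes A$, and the symmetric computation using $\rho(x)E=\rho(x)$ together with $(1\otimes A)\rho(R)=R\otimes A$ gives $\overline{\overline{E}}=\imath$, so that $E=1_{M(R\otimes A)}=1_{M(R)}\otimes 1_{M(A)}$.

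The main obstacle I anticipate is justifying the step ``$R$ is a global comodule algebra implies the global surjectivity $\rho(R)(1\otimes A)=R\otimes A$'' in a way that legitimately feeds Lemma \ref{rhoigual_rhoe}, whose hypothesis is only that $(R,\rho,E)$ is a \emph{partial} comodule algebra. One must be careful that the global structure (Definition \ref{defcoacao}) is genuinely available under the hypothesis of the forward direction, and here Proposition \ref{procoacaobij} supplies exactly the bijectivity of $T_1$ and $T_2$, hence $\rho(R)(1\otimes A)=R\otimes A=(1\otimes A)\rho(R)$, which is what the surjectivity argument needs. A secondary subtlety is that $E$ a priori lives in $M(R\otimes A)$ rather than $R\otimes A$, so I cannot directly equate it with an element; instead I must argue at the level of the defining left and right multiplication maps $\overline{E}$ and $\overline{\overline{E}}$, showing each coincides with the identity on $R\otimes A$ and invoking nondegeneracy of the product in $R\otimes A$ to conclude $E$ equals the unit multiplier. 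Once the surjectivity and the identification-via-nondegeneracy points are handled cleanly, the remainder is a short direct verification.
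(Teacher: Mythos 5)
Your proof is correct and follows essentially the same route as the paper: the reverse direction is immediate, and the forward direction combines the global surjectivity $\rho(R)(1\otimes A)=R\otimes A$ from Proposition \ref{procoacaobij} with the fact that $E$ acts as the identity on $\rho(R)(1\otimes A)$ to conclude $E(x\otimes a)=x\otimes a$. The only cosmetic difference is that you invoke Lemma \ref{rhoigual_rhoe} ($E\rho(x)=\rho(x)$) where the paper applies the idempotency $E^2=E$ directly to an element of $E(R\otimes A)$ --- these are the same idea, and your extra care about $\overline{E}$ versus $\overline{\overline{E}}$ and nondegeneracy is sound, if slightly redundant given that one side of a multiplier determines the other.
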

\begin{proof}
Assume that $R$ is an $A$-comodule algebra. Thus,
\begin{center}
$R\otimes A=\rho(R)(1\otimes A)\subseteq E(R\otimes A) .$\\

\end{center}

Therefore, for all $x\in R$ and $a\in A$,
$$x\otimes a= E(\sum_i y_i\otimes b_i)= E E(\sum_i y_i\otimes b_i)= E(x\otimes a).$$

Then  $E=1_{M(R)}\otimes 1_{M(A)}$. Conversely, if $E=1_{M(R)}\otimes 1_{M(A)}$ we naturally obtain Definition \ref{defcoacao}.
\end{proof}

 \begin{pro}
	If $(R, \rho, E)$ is a partial $A$-comodule algebra, then  $(i\otimes \varepsilon)\rho(x)=x$, for all $x\in R$.
	\label{pro_coavarepsi}
\end{pro}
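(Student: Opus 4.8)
The plan is to slice the last tensor leg of the rewritten coassociativity identity \eqref{escritacoa1} by the counit $\varepsilon$, turning the threefold relation into a statement about $\rho$ composed with the putative counit slice, and then to cancel $\rho$ using its injectivity. The only genuinely new feature with respect to the global Proposition is the idempotent $E$, which I will absorb using Lemma \ref{rhoigual_rhoe}.

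First I would fix $x\in R$ and a covering element $b\in A$, so that both sides of \eqref{escritacoa1} become honest elements of $M(R\otimes A)\otimes A$ whose third leg lies in $A$; this is what makes the slice map $\imath\otimes\imath\otimes\varepsilon$ well defined, since the counit is then applied only to genuine elements of $A$. On the left-hand side, writing $\rho(x)(1\otimes b)=\sum_i y_i\otimes c_i\in R\otimes A$ (the inclusion coming from axiom (ii)), one has $(\rho\otimes\imath)(\rho(x)(1\otimes b))=\sum_i\rho(y_i)\otimes c_i$, and applying $\imath\otimes\imath\otimes\varepsilon$ yields $\sum_i\rho(y_i)\varepsilon(c_i)=\rho\big((\imath\otimes\varepsilon)(\rho(x)(1\otimes b))\big)$, using that $\rho$ is an algebra homomorphism and hence commutes with the scalar factors $\varepsilon(c_i)$.

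For the right-hand side I would use the counit axiom in the form $(\imath\otimes\varepsilon)(\Delta(c)(1\otimes b))=\varepsilon(b)c$, which follows from $\varepsilon$ being an algebra homomorphism together with $(\imath\otimes\varepsilon)\Delta=\imath$. Applying $\imath\otimes\imath\otimes\varepsilon$ to $(E\otimes 1)(\imath\otimes\Delta)(\rho(x))(1\otimes 1\otimes b)$ collapses the third leg and, after the counit identity above, produces $E\rho(x)\,\varepsilon(b)$. Lemma \ref{rhoigual_rhoe} then gives $E\rho(x)=\rho(x)$, so this equals $\rho(x)\varepsilon(b)=\rho(\varepsilon(b)x)$. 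Equating the two sides and invoking the injectivity of $\rho$ produces $(\imath\otimes\varepsilon)(\rho(x)(1\otimes b))=\varepsilon(b)x$ for every $b\in A$, which is precisely the covered meaning of the asserted identity $(\imath\otimes\varepsilon)\rho(x)=x$.

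The main obstacle I anticipate is the bookkeeping with multipliers: justifying that each expression becomes a genuine element once the covering $b$ is inserted, that the slice $\imath\otimes\imath\otimes\varepsilon$ is legitimate (the counit not being defined on all of $M(A)$), and above all that the idempotent $E$ is correctly absorbed rather than leaving a spurious factor. This last point is the truly new step compared with the global case and is exactly where Lemma \ref{rhoigual_rhoe} is essential.
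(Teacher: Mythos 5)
Your proposal is correct and follows essentially the same route as the paper's proof: slice the third leg of identity \eqref{escritacoa1} with $\varepsilon$, use the counit axiom to collapse the right-hand side to $E\rho(x)\varepsilon(b)$, absorb $E$ via Lemma \ref{rhoigual_rhoe}, and conclude by the injectivity of $\rho$. The only cosmetic difference is that the paper normalizes $\varepsilon(b)=1_{\Bbbk}$ at the outset while you carry the factor $\varepsilon(b)$ through to the end.
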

\begin{proof}
Let $b\in A$ such that $\varepsilon(b)=1_{\Bbbk}$, hence
\begin{eqnarray*}
\rho((\imath\otimes\varepsilon)(\rho(x)(1\otimes b)))&=& (\imath\otimes\imath\otimes\varepsilon)((\rho\otimes\imath)(\rho(x)(1\otimes b)))\\
&\stackrel{(\ref{escritacoa1})}{=}&(\imath\otimes\imath\otimes\varepsilon)((E\otimes 1)(\imath\otimes\Delta)(\rho(x))(1\otimes 1\otimes b))\\
&=& E(\imath\otimes (\imath\otimes\varepsilon)\Delta)(\rho(x))\varepsilon(b)\\
&=&E\rho(x)\\
&\stackrel{\ref{rhoigual_rhoe}}{=}&\rho(x),
\end{eqnarray*}
for all $x\in R$. Therefore, since $\rho$ is an injective map, $x=(\imath\otimes\varepsilon)(\rho(x)(1\otimes b))=(\imath\otimes\varepsilon)\rho(x)$, for all $x\in R$.
\end{proof}

Lemma \ref{lema_rhoigroe} is also used in the following result.

\begin{pro} Let $(R, \rho, E)$ be a partial $A$-comodule algebra. Then
\begin{eqnarray*}
(\rho\otimes \imath)((y\otimes 1)\rho(x)(1\otimes b))=(\rho(y)\otimes 1)(\imath\otimes\Delta)(\rho(x))(1\otimes 1\otimes b),
\end{eqnarray*}
for all $b\in A$ and $x,y\in R$.
\end{pro}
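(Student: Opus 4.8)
The plan is to reduce the claim to the already-established rewriting \eqref{escritacoa1} by exploiting that $\rho$ is an algebra homomorphism and then absorbing the idempotent $E$ with Lemma \ref{lema_rhoigroe}. First I would note that condition (ii) of Definition \ref{def_comoalgparcarcial_multip} guarantees $\rho(x)(1\otimes b)\in R\otimes A$, so it can be written as a genuine (finite) element $\sum_i x_i\otimes a_i$ of $R\otimes A$ rather than merely as a multiplier. Left-multiplying by $y\otimes 1$ keeps us inside $R\otimes A$, and $(y\otimes 1)\rho(x)(1\otimes b)=\sum_i yx_i\otimes a_i$.

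Next, since $\rho$ is an algebra homomorphism, applying $\rho\otimes\imath$ term by term gives $(\rho\otimes\imath)(\sum_i yx_i\otimes a_i)=\sum_i\rho(y)\rho(x_i)\otimes a_i=(\rho(y)\otimes 1)\sum_i\rho(x_i)\otimes a_i$, which is exactly $(\rho(y)\otimes 1)(\rho\otimes\imath)(\rho(x)(1\otimes b))$. In other words, the homomorphism property lets me factor the left multiplier $\rho(y)\otimes 1$ out of $\rho\otimes\imath$; here the covering by $b$ ensures that every intermediate expression is a legitimate element of $R\otimes A\otimes A$.

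Then I would invoke the identity \eqref{escritacoa1}, namely $(\rho\otimes\imath)(\rho(x)(1\otimes b))=(E\otimes 1)(\imath\otimes\Delta)(\rho(x))(1\otimes 1\otimes b)$, to replace the remaining factor, obtaining $(\rho(y)\otimes 1)(E\otimes 1)(\imath\otimes\Delta)(\rho(x))(1\otimes 1\otimes b)$. Finally, Lemma \ref{lema_rhoigroe} yields $\rho(y)E=\rho(y)$, so $(\rho(y)\otimes 1)(E\otimes 1)=\rho(y)E\otimes 1=\rho(y)\otimes 1$, and the right-hand side collapses to the desired $(\rho(y)\otimes 1)(\imath\otimes\Delta)(\rho(x))(1\otimes 1\otimes b)$.

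The only delicate point is the bookkeeping in the multiplier algebra $M(R\otimes A\otimes A)$: I must check that pulling $\rho(y)\otimes 1$ through $\rho\otimes\imath$ and absorbing $E\otimes 1$ are operations between well-covered elements, so that each product is an honest element of $R\otimes A\otimes A$ (or a legitimate multiplier acting on one). Given that the covering by $b$ is maintained throughout and that Lemma \ref{lema_rhoigroe} is available, no further structural input is needed; the argument is essentially a one-line reduction to \eqref{escritacoa1}.
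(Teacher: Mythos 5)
Your argument is correct and coincides with the paper's own proof: both factor $(\rho\otimes\imath)$ through the homomorphism property to pull out $\rho(y)\otimes 1$, invoke the rewriting \eqref{escritacoa1}, and then absorb $E$ via Lemma \ref{lema_rhoigroe} ($\rho(y)E=\rho(y)$). The extra care you take about covering by $b$ is implicit in the paper but does not change the argument.
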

\begin{proof}
In fact,
\begin{eqnarray*}
(\rho\otimes \imath)((y\otimes 1)\rho(x)(1\otimes b))&=& (\rho(y)\otimes 1)(\rho\otimes \imath)(\rho(x)(1\otimes b))\\
&=&(\rho(y)\otimes 1)(E\otimes 1)(\imath\otimes\Delta)(\rho(x))(1\otimes 1\otimes b)\\
&=& (\rho(y)E\otimes 1)(\imath\otimes\Delta)(\rho(x))(1\otimes 1\otimes b)\\
&=& (\rho(y)\otimes 1)(\imath\otimes\Delta)(\rho(x))(1\otimes 1\otimes b),
\end{eqnarray*}
for all $b\in A$ and $x,y\in R$.
\end{proof}
Similarly, if the partial coaction $\rho$  is symmetric
\begin{eqnarray*}
	(\rho\otimes\imath)((1\otimes b)\rho(x)(y\otimes 1))=(1\otimes 1\otimes b)(\imath\otimes\Delta)(\rho(x))(\rho(y)\otimes 1),
\end{eqnarray*}
for all $b\in A$ and $x,y\in R$.

\begin{rem} The items of Definition \ref{def_comoalgparcarcial_multip} can be rewritten as follows:
\begin{itemize}
\item[(i)] $\rho(x)(1\otimes a)=x^{(0)}\otimes x^{(1)}a$ and $(1\otimes a)\rho(x)=x^{(0)}\otimes ax^{(1)}\in R\otimes A$;
\vu
\item[(ii)] $x^{(0)(0)}\otimes x^{(0)(1)}a\otimes x^{(1)}b=\sum_i E(x^{(0)}\otimes (x^{(1)}a_i)_{(1)})\otimes (x^{(1)}a_i)_{(2)}b_i $, denoting $a\otimes b=\sum_i \Delta(a_i)(1\otimes b_i)$;
\vu
\item[(iii)] $x^{(0)(0)}\otimes ax^{(0)(1)}\otimes bx^{(1)}=\sum_{i,j}m_ix^{(0)}\otimes (a_{ij}x^{(1)})_{(1)}\otimes b_j(a_{ij}x^{(1)})_{(2)}$, denoting $(1\otimes a)E=\sum_im_i\otimes a_i$ and, for each $i$, $a_i\otimes b=\sum_j(1\otimes b_j)\Delta(a_{ij})$;
\vu
\item[(iv)] $x^{(0)(0)}\otimes x^{(0)(1)}a\otimes x^{(1)}b=\sum_{i,j}x^{(0)}m_i\otimes (x^{(1)}a_{ij})_{(1})\otimes (x^{(1)}a_{ij})_{(2)}b_j$, denoting $E(1\otimes a)=\sum_i m_i\otimes a_i$ and, for each $i$, $a_i\otimes b=\sum_{j}\Delta(a_{ij})(1\otimes b_j)$;
\vu
\item[(v)] $x^{(0)(0)}\otimes ax^{(0)(1)}\otimes bx^{(1)}=\sum_i(x^{(0)}\otimes (a_ix^{(1)})_{(1)})E\otimes b_i(a_ix^{(1)})_{(2)}$, denoting $a\otimes b=\sum_i(1\otimes b_i)\Delta(a_i)$,
\end{itemize}
for all $x\in R$ and $a,b\in A$,
 \end{rem}
\begin{pro} Let $(R, \rho, E)$ be a partial $A$-comodule algebra. Then $$\rho(R)(1\otimes A)=E(R\otimes A).$$
\label{pro_bijetivcoacao}
\end{pro}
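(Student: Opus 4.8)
The inclusion $\rho(R)(1\otimes A)\subseteq E(R\otimes A)$ is precisely item (ii) of Definition \ref{def_comoalgparcarcial_multip}, so the entire content of the statement is the reverse inclusion $E(R\otimes A)\subseteq\rho(R)(1\otimes A)$. The plan is to produce, for each generator $E(y\otimes c)$ with $y\in R$ and $c\in A$, a genuine element of $R\otimes A$ whose image under $T_1\colon x\otimes a\mapsto\rho(x)(1\otimes a)$ is exactly $E(y\otimes c)$; since $\rho(R)(1\otimes A)$ is by definition the range of $T_1$, this immediately yields the inclusion. Here I use crucially that $A$ is regular, so that $S(A)=A$: writing $c=S(d)$ for a suitable $d\in A$ will let me keep the preimage inside $R\otimes A$ rather than only in $M(R\otimes A)$.

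Concretely, for $y\in R$ and $d\in A$ I would take as candidate preimage
\begin{eqnarray*}
\eta:=(\imath\otimes S)\big((1\otimes d)\rho(y)\big)= y^{(0)}\otimes S(y^{(1)})S(d),
\end{eqnarray*}
which is a \emph{genuine} element of $R\otimes A$, because $(1\otimes d)\rho(y)=y^{(0)}\otimes dy^{(1)}\in R\otimes A$ already holds by item (i), and $\imath\otimes S$ preserves $R\otimes A$ as $S(A)=A$. Setting $c:=S(d)$, I then compute $T_1(\eta)=\rho(y^{(0)})(1\otimes S(y^{(1)})c)$ and reorganize it by co-associativity. Using item (iii) to replace $(\rho\otimes\imath)(\rho(y))$ by $(E\otimes 1)(\imath\otimes\Delta)(\rho(y))$, and then the counit and antipode identities from Section 2 together with Proposition \ref{pro_coavarepsi}, the chain reads
\begin{eqnarray*}
\rho(y^{(0)})\big(1\otimes S(y^{(1)})c\big)
&=& E\big(y^{(0)}\otimes y^{(1)}_{(1)}\,S(y^{(1)}_{(2)})\,c\big)\\
&=& E\big(\varepsilon(y^{(1)})\,y^{(0)}\otimes c\big)\\
&=& E(y\otimes c),
\end{eqnarray*}
where the first equality incorporates item (iii) and the factoring of $E$ described below, the second uses $\sum y^{(1)}_{(1)}S(y^{(1)}_{(2)})=\varepsilon(y^{(1)})1$, and the third uses $(\imath\otimes\varepsilon)\rho(y)=y$. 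Thus $E(y\otimes c)=T_1(\eta)\in\rho(R)(1\otimes A)$, and letting $y,c$ vary gives $E(R\otimes A)\subseteq\rho(R)(1\otimes A)$.

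The delicate point, and the one I would treat most carefully, is the \emph{factoring of $E$} in the first equality. In $(E\otimes 1)(\imath\otimes\Delta)(\rho(y))$ the multiplier $E$ acts on the first two tensor legs, exactly the legs on which one wants to apply $S$ to collapse the coproduct; one cannot naively push $S$ through $E$. The point is that the operation turning $u\otimes h\otimes k$ into $(u\otimes h)(1\otimes S(k)c)$ is a \emph{right} multiplication on the first two legs, which commutes with the \emph{left} action of $E$ there; hence $E$ pulls out to the front and only then does the antipode collapse $y^{(1)}_{(1)}S(y^{(1)}_{(2)})$ act on legs that are free of $E$. This is precisely why the preimage must carry $c$ to the right of $S(y^{(1)})$ (forcing the use of $S(A)=A$), rather than to the left as would arise from the more immediate covered form \eqref{escritacoa1}. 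Once the coverage issues are handled by running the computation on the genuine element $\eta$ and invoking item (iii) in covered form, the argument is routine; consistency with the global case is visible, since for $E=1_{M(R)}\otimes 1_{M(A)}$ it recovers the surjectivity of $T_1$ onto $R\otimes A$ from Proposition \ref{procoacaobij}.
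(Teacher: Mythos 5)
Your proposal is correct and follows essentially the same route as the paper: the preimage $y^{(0)}\otimes S(S^{-1}(c)y^{(1)})$ you construct via $\eta=(\imath\otimes S)\bigl((1\otimes d)\rho(y)\bigr)$ with $c=S(d)$ is exactly the element $x^{(0)}\otimes S(S^{-1}(b)x^{(1)})$ the paper produces, and your use of the covered coassociativity identity \eqref{escritacoa2}, the antipode axiom, and Proposition \ref{pro_coavarepsi} matches the paper's computation (run in the opposite direction, from $(1\otimes a)E(x\otimes b)$ down to $(1\otimes a)\rho(x^{(0)})(1\otimes S(S^{-1}(b)x^{(1)}))$). Your remarks on why $E$ pulls out past the right multiplication and why $S(A)=A$ is needed are precisely the points the paper handles by left-multiplying with $(1\otimes a)$ and inserting an auxiliary $d$ with $\varepsilon(d)=1_{\Bbbk}$.
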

\begin{proof} \vspace{-0.7cm}
It is enough to check that $E(R\otimes A)\subseteq \rho(R)(1\otimes A)$. Indeed, for any $a\in A$ one can write $(1\otimes a)E=\sum_i m_i \otimes a_i$. Then, taking $d\in A$ such that $\varepsilon(d)=1_{\Bbbk}$, we have
\begin{eqnarray*}
&&\hspace{-2cm}(1\otimes a)E(x\otimes b)=\\
&\stackrel{\ref{pro_coavarepsi}}{=}& \sum_i m_ix^{(0)} \varepsilon (x^{(1)}d)\otimes a_iS(S^{-1}(b))\\
&=& \sum_i m_ix^{(0)}\varepsilon(S^{-1}(b)_{(2)}(x^{(1)}d))\otimes a_iS(S^{-1}(b)_{(1)})\\
&=&\sum_i m_ix^{(0)}\otimes a_iS(S^{-1}(b)_{(1)})\varepsilon(S^{-1}(b)_{(2)}x^{(1)})\varepsilon (d)\\
&=& \sum_i m_ix^{(0)}\otimes m(\imath\otimes S)((a_iS(S^{-1}(b)_{(1)})\otimes 1)\Delta(S^{-1}(b)_{(2)}x^{(1)}))\\
&=&  (\imath\otimes m(\imath\otimes S))((\sum_im_i\otimes a_iS(S^{-1}(b)_{(1)})\otimes 1)(\imath\otimes\Delta)((1\otimes S^{-1}(b)_{(2)})\rho(x)))\\
&=& (\imath\otimes m(\imath\otimes S))((\sum_im_i\otimes( a_iS(S^{-1}(b)_{(1)})\otimes 1)\Delta(S^{-1}(b)_{(2)}))(\imath\otimes\Delta)(\rho(x)))\\
&=&(\imath\otimes m(\imath\otimes S))((\sum_im_i\otimes a_i\otimes S^{-1}(b))(\imath\otimes\Delta)(\rho(x)))\\
&=&(\imath\otimes m(\imath\otimes S))(((1\otimes a)E\otimes S^{-1}(b))(\imath\otimes\Delta)(\rho(x))) \\
&=&(\imath\otimes m(\imath\otimes S))((1\otimes a\otimes S^{-1}(b))(E\otimes 1)(\imath\otimes\Delta)(\rho(x)))\\
&\stackrel{(\ref{escritacoa2})}{=}& (\imath\otimes m(\imath\otimes S))((1\otimes a\otimes 1)(\rho\otimes\imath)((1\otimes S^{-1}(b))\rho(x)))\\
&=&(\imath\otimes m(\imath\otimes S))(x^{(0)(0)}\otimes ax^{(0)(1)}\otimes S^{-1}(b)x^{(1)})\\
&=&x^{(0)(0)}\otimes ax^{(0)(1)}S(S^{-1}(b)x^{(1)})\\
& = & (1\otimes a)\rho(x^{(0)})(1\otimes S(S^{-1}(b)x^{(1)})),
\end{eqnarray*}
for all $x\in R$ and $a,b\in A$. Hence, $E(x\otimes b)=\rho(x^{(0)})(1\otimes S(S^{-1}(b)x^{(1)}))\in \rho(R)(1\otimes A)$.
\end{proof}

Similarly to the above result, if $R$ is a symmetric partial $A$-comodule algebra then $(1 \otimes A)\rho(R)=(R \otimes A)E$.

\begin{pro} If $A$ and $R$ are unital,  then Definition \ref{right_part_comod_alg} and Definition \ref{def_comoalgparcarcial_multip} coincide.
\end{pro}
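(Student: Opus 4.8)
The plan is to show that, in the unital setting, the two definitions are equivalent by taking the idempotent $E = \rho(1_R)$ and verifying that the multiplier-algebra axioms of Definition \ref{def_comoalgparcarcial_multip} collapse to the tensor-product axioms of Definition \ref{right_part_comod_alg}. The key observation enabling everything is that when both $A$ and $R$ are unital, $M(R\otimes A) \cong R\otimes A$, so $\rho$ maps into $R\otimes A$ honestly and there is no genuine extension to worry about. First I would treat the direction from Definition \ref{def_comoalgparcarcial_multip} to Definition \ref{right_part_comod_alg}: given a partial $A$-comodule algebra in the multiplier sense, set $E := \rho(1_R)$. I must check $E$ is the correct idempotent, i.e. that $\rho(1_R)$ is idempotent, which follows from $\rho$ being an algebra homomorphism and $1_R^2 = 1_R$. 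Axiom (i) of Definition \ref{right_part_comod_alg} is exactly multiplicativity of $\rho$; axiom (ii) is Proposition \ref{pro_coavarepsi}; and axiom (iii) must be recovered from condition (iii) of Definition \ref{def_comoalgparcarcial_multip}, namely $(\rho\otimes\imath)\rho(x) = (E\otimes 1)(\imath\otimes\Delta)\rho(x)$, by substituting $E = \rho(1_R)$.

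For the converse direction, from Definition \ref{right_part_comod_alg} to Definition \ref{def_comoalgparcarcial_multip}, I would again take $E = \rho(1_R)\in R\otimes A \subseteq M(R\otimes A)$. The idempotency of $E$ is immediate, and condition (iii) of Definition \ref{def_comoalgparcarcial_multip} is literally condition (iii) of Definition \ref{right_part_comod_alg}. The containments in conditions (i) and (ii) of Definition \ref{def_comoalgparcarcial_multip} require a short argument: since $A$ is unital I must show $\rho(R)(1\otimes A)\subseteq E(R\otimes A)$, which reduces to showing $\rho(x) = E\rho(x) = \rho(x)E$ for all $x$; this is precisely the content of Lemma \ref{rhoigual_rhoe}, but in the unital case it follows directly because $E\rho(x) = \rho(1_R)\rho(x) = \rho(1_R x) = \rho(x)$ and symmetrically on the right, using only multiplicativity. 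The containments $(1\otimes A)E \subseteq M(R)\otimes A$ and $E(1\otimes A)\subseteq M(R)\otimes A$ of condition (i) are automatic since $E\in R\otimes A$ and $A$ is unital.

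The main subtlety I expect is the bookkeeping around the identification $M(R\otimes A)\cong R\otimes A$ and ensuring that the symmetry condition (iv) also matches up. For (iv), I would observe that condition (iv) of Definition \ref{def_comoalgparcarcial_multip}, $(\rho\otimes\imath)\rho(x) = (\imath\otimes\Delta)\rho(x)(E\otimes 1)$, becomes $(\rho\otimes\imath)\rho(x) = (\imath\otimes\Delta)\rho(x)(\rho(1_R)\otimes 1)$ upon substituting $E=\rho(1_R)$, which is exactly condition (iv) of Definition \ref{right_part_comod_alg}; so the two symmetry notions also coincide. The remaining point to verify carefully is that the idempotent $E$ is \emph{uniquely} determined, so that the correspondence is genuinely a bijection and not merely a pair of one-way constructions: here I would invoke Lemma \ref{rhoigual_rhoe} together with Proposition \ref{pro_bijetivcoacao}, which in the unital case forces $E(R\otimes A) = \rho(R)(1\otimes A) = \rho(R)$ and hence $E = E(1_R\otimes 1_A) = \rho(1_R)$, pinning down $E$. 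This uniqueness is the only step that is not purely formal, and it is where the hypotheses that $R$ and $A$ are unital are genuinely used.
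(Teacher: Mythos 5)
Your proposal is correct and follows essentially the same route as the paper: take $E=\rho(1_R)$ in one direction, and in the other pin down the given idempotent as $E=\rho(1_R)$ by combining Lemma \ref{rhoigual_rhoe} with Proposition \ref{pro_bijetivcoacao}, which are exactly the two ingredients the paper's own computation uses. The only point you leave implicit is that passing from Definition \ref{right_part_comod_alg} to Definition \ref{def_comoalgparcarcial_multip} also requires the injectivity of $\rho$ demanded there, which, as the paper notes, is immediate from the counit axiom $(\imath\otimes\varepsilon)\rho(x)=x$.
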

\begin{proof}
Suppose Definition \ref{right_part_comod_alg}. It is enough to consider $E=\rho(1_R)$ and  to observe that the item (ii) of this definition is equivalent to the injectivity of the coaction $\rho$. Conversely,
$$
\rho(1_R)(x\otimes a)\stackrel{\ref{rhoigual_rhoe}}{=}\rho(1_R)E(x\otimes a)\stackrel{\ref{pro_bijetivcoacao}}{=} \sum_i\rho(1_Rx_i)(1\otimes a_i)= E(x\otimes a),$$
for all $x\in R$ and $a\in A$, hence $\rho(1_R)=E$.
\end{proof}

Our purpose now is to extend a symmetric partial coaction of $A$ on $R$ to an algebra homomorphism $\rho :M(R) \longrightarrow M(R \otimes A)$.
  \begin{pro}  	\label{pro_extcoapart}
  	Let $(R,\rho,E)$ be a symmetric partial  $A$-comodule algebra. Then there exists a unique algebra homomorphism $\rho: M(R)\longrightarrow M(R\otimes A)$ such that $\rho(1_{M(R)})=E$.
  \end{pro}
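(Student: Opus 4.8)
The plan is to construct the extension of $\rho$ on $M(R)$ via its action on $R$, exploiting the bijectivity of the maps $T_1$ and $T_2$ (adapted from Proposition \ref{procoacaobij}) together with the key equality $\rho(R)(1\otimes A)=E(R\otimes A)$ established in Proposition \ref{pro_bijetivcoacao} and its symmetric analogue $(1\otimes A)\rho(R)=(R\otimes A)E$. For $m\in M(R)$, I would define $\rho(m)$ as the multiplier $(\overline{\rho(m)},\overline{\overline{\rho(m)}})$ on $R\otimes A$ determined by the prescriptions
\begin{eqnarray*}
\overline{\rho(m)}(\rho(x)(1\otimes a)) &=& \rho(mx)(1\otimes a),\\
\overline{\overline{\rho(m)}}((1\otimes a)\rho(x)) &=& (1\otimes a)\rho(xm),
\end{eqnarray*}
for all $x\in R$ and $a\in A$, mirroring the extension \eqref{coaglobal} from the global case. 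The first step is to check these formulas are well defined: since $\rho(R)(1\otimes A)=E(R\otimes A)$ spans (a dense subspace inside) $R\otimes A$ and since $\rho$ is injective, the values $\rho(mx)$ and $\rho(xm)$ depend linearly and unambiguously on the left/right factors, and the symmetry condition (iv) is what guarantees that the left-multiplier defined through $T_1$ and the right-multiplier defined through $T_2$ refer to the same element of $M(R\otimes A)$.

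Next I would verify the three multiplier axioms (i)--(iii) from the preliminaries for the pair $(\overline{\rho(m)},\overline{\overline{\rho(m)}})$, namely that it genuinely lands in $M(R\otimes A)$. The compatibility relation to check is
\begin{eqnarray*}
(p\otimes c)\,\overline{\rho(m)}(\rho(x)(1\otimes a)) = \overline{\overline{\rho(m)}}((p\otimes c)\rho(x))(1\otimes a),
\end{eqnarray*}
which reduces, after expanding both sides using the defining formulas and the fact that $\rho$ is a homomorphism on $R$ (so $\rho(p)\rho(mx)=\rho(pmx)=\rho(pm)\rho(x)$), to an identity that holds because multiplication in $M(R)$ is associative, i.e. $p(mx)=(pm)x$. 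This is where the associativity of the multiplier algebra $M(R)$ does the real work. I would then confirm that $\rho\colon M(R)\to M(R\otimes A)$ is an algebra homomorphism by unwinding $\rho(mn)$ against a generator $\rho(x)(1\otimes a)$ and matching it with $\overline{\rho(m)}\,\overline{\rho(n)}$ applied to the same generator.

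The claim $\rho(1_{M(R)})=E$ follows directly: applying the definition with $m=1_{M(R)}$ gives $\overline{\rho(1_{M(R)})}(\rho(x)(1\otimes a))=\rho(x)(1\otimes a)$, so $\rho(1_{M(R)})$ acts as the identity on $\rho(R)(1\otimes A)=E(R\otimes A)$; combined with Lemma \ref{rhoigual_rhoe} ($E\rho(x)=\rho(x)=\rho(x)E$, whence $E$ is idempotent and acts as identity on that same subspace) and the nondegeneracy of the product on $R\otimes A$, one identifies the left multiplier with $E$, and the right-hand identity $(1\otimes A)\rho(R)=(R\otimes A)E$ pins down the right multiplier symmetrically. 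Uniqueness is then immediate: any algebra homomorphism extending $\rho$ is forced, by the homomorphism property $\tilde\rho(m)\rho(x)=\rho(mx)$ and the surjectivity statement $\rho(R)(1\otimes A)=E(R\otimes A)=R\otimes A$ modulo the covering conventions, to agree with the formulas above on a spanning set, hence everywhere. I expect the main obstacle to be the well-definedness and the two-sided compatibility step: precisely checking that condition (iv) (symmetry) reconciles the two one-sided definitions into a single multiplier, and handling the covering/Sweedler-leg bookkeeping so that every expression genuinely lies in $R\otimes A$ rather than merely in $M(R\otimes A)$.
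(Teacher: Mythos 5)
Your overall strategy matches the paper's --- both rest on Proposition \ref{pro_bijetivcoacao} and its symmetric analogue $(1\otimes A)\rho(R)=(R\otimes A)E$, and both define $\rho(m)$ as a pair of one-sided multipliers --- but your defining formulas have a genuine gap: they only determine $\overline{\rho(m)}$ on the subspace $\rho(R)(1\otimes A)=E(R\otimes A)$ and $\overline{\overline{\rho(m)}}$ on $(1\otimes A)\rho(R)=(R\otimes A)E$. In the genuinely partial case these are \emph{proper} subspaces of $R\otimes A$ (by Proposition \ref{caract_comod}, equality with $R\otimes A$ forces $E=1$ and the coaction to be global), whereas a multiplier of $R\otimes A$ must consist of linear maps defined on all of $R\otimes A$. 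Your appeal to $E(R\otimes A)$ spanning ``a dense subspace'' has no force here: the setting is purely algebraic and there is no topology along which to extend. For the same reason the parenthetical ``$\rho(R)(1\otimes A)=E(R\otimes A)=R\otimes A$ modulo the covering conventions'' in your uniqueness step is simply false when $E\neq 1$; this is exactly the point where the partial case departs from the global extension \eqref{coaglobal} you are mirroring.

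The paper closes exactly this gap by defining the maps on \emph{all} of $R\otimes A$: it sets $\overline{\rho(m)}(x\otimes a)=\sum_i\rho(my_i)(1\otimes b_i)$ where $E(x\otimes a)=\sum_i\rho(y_i)(1\otimes b_i)$, and symmetrically $\overline{\overline{\rho(m)}}(x\otimes a)=\sum_j(1\otimes c_j)\rho(z_jm)$ where $(x\otimes a)E=\sum_j(1\otimes c_j)\rho(z_j)$. In other words, the multiplier first multiplies by $E$ and only then applies your formula; equivalently, it is your map on $E(R\otimes A)$ extended by zero on $(1-E)(R\otimes A)$. This is not cosmetic: it is what makes $\rho(1_{M(R)})=E$ come out, since with the paper's definition $\overline{\rho(1_{M(R)})}(x\otimes a)=E(x\otimes a)$ on the nose, whereas your prescription only shows that $\rho(1_{M(R)})$ acts as the identity on $E(R\otimes A)$ --- which $1_{M(R\otimes A)}$ also does, so the behaviour off that subspace is precisely what distinguishes $E$ from $1$. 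Once the definition is corrected in this way, the well-definedness, compatibility and homomorphism checks you outline do go through essentially as you describe.
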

  \begin{proof}	By assumption $E(R\otimes A)=\rho(R)(1\otimes A)$ and $(R\otimes A)E=(1\otimes A)\rho(R)$, then it is enough to define the following linear map
 	\begin{eqnarray*}
 		\rho: M(R)&\longrightarrow &M(R\otimes A)\\
 		m &\longmapsto &\rho(m)=(\overline{\rho(m)},\overline{\overline{\rho(m)}}),
 	\end{eqnarray*}
 	such that $\overline{\rho(m)}(x\otimes a)=\sum_i\rho(my_i)(1\otimes b_i)$, where $E(x\otimes a)=\sum_i\rho(y_i)(1\otimes b_i)$, and $\overline{\overline{\rho(m)}}(x\otimes a)=\sum_j (1\otimes c_j)\rho(z_jm)$, where $(x\otimes a)E=\sum_j (1\otimes c_j)\rho(z_j)$, for all $x\in R$ and $a\in A$.
 \end{proof}

 \begin{cor} Under the above hypothesis, the linear map $\rho: M(R)\longrightarrow M(R\otimes A)$ is injective.
 \end{cor}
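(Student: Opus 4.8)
The plan is to reduce the injectivity of the extended map to the injectivity of $\rho$ on $R$ together with the nondegeneracy of the product in $R$. Since $\rho\colon M(R)\to M(R\otimes A)$ is linear, it suffices to show that $\rho(m)=0$ forces $m=0$. Writing $\rho(m)=(\overline{\rho(m)},\overline{\overline{\rho(m)}})$, the hypothesis gives in particular that the left component $\overline{\rho(m)}$ vanishes, and this single component will already be enough.

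The key observation is how $\overline{\rho(m)}$ acts on elements already lying in $\rho(R)(1\otimes A)$. For $y\in R$ and $b\in A$, the element $\rho(y)(1\otimes b)$ belongs to $R\otimes A$, and by Lemma \ref{rhoigual_rhoe} one has $E\big(\rho(y)(1\otimes b)\big)=\rho(y)(1\otimes b)$; thus $\rho(y)(1\otimes b)$ serves as its own decomposition in the defining formula of Proposition \ref{pro_extcoapart}, giving
\[
\overline{\rho(m)}\big(\rho(y)(1\otimes b)\big)=\rho(my)(1\otimes b).
\]
From $\overline{\rho(m)}=0$ I would therefore conclude that $\rho(my)(1\otimes b)=0$ for all $y\in R$ and $b\in A$.

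Next I would strip off the comodule structure using the counit. Choosing $b\in A$ with $\varepsilon(b)=1_{\Bbbk}$ and applying $(\imath\otimes\varepsilon)$ to the identity $\rho(my)(1\otimes b)=0$, the counit property of Proposition \ref{pro_coavarepsi} yields $my=0$ for every $y\in R$; that is, the left multiplier component $\overline{m}$ is zero. Finally, the compatibility relation $a\,\overline{m}(b)=\overline{\overline{m}}(a)\,b$ recorded in the preliminaries gives $\overline{\overline{m}}(a)\,b=0$ for all $a,b\in R$, and nondegeneracy of the product in $R$ then forces $\overline{\overline{m}}=0$ as well, whence $m=0$.

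I do not expect a serious obstacle here: the only point requiring care is the justification of the displayed identity for $\overline{\rho(m)}$, namely that one may use $\rho(y)(1\otimes b)$ itself as the decomposition $E(x\otimes a)=\sum_i\rho(y_i)(1\otimes b_i)$ appearing in the definition of the extension. This is precisely where Lemma \ref{rhoigual_rhoe} (in the form $E\rho(y)=\rho(y)$) and the equality $E(R\otimes A)=\rho(R)(1\otimes A)$ of Proposition \ref{pro_bijetivcoacao} are used, while the well-definedness of the extension established in Proposition \ref{pro_extcoapart} guarantees that the value computed this way is indeed the correct one.
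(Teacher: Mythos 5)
Your argument is correct. The paper states this corollary without proof, so there is nothing to compare against line by line; your route --- evaluating $\overline{\rho(m)}$ on $\rho(y)(1\otimes b)$ via Lemma \ref{rhoigual_rhoe} and Proposition \ref{pro_bijetivcoacao} to get $\rho(my)(1\otimes b)=0$, killing $my$ with the counit identity of Proposition \ref{pro_coavarepsi}, and then recovering $\overline{\overline{m}}=0$ from the multiplier compatibility relation and nondegeneracy of $R$ --- is exactly the natural argument the paper's setup is designed to support, and every step is justified by results already established.
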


Our approach differs from \cite{timmermann}, where the existence of an extension for $\rho$ to the multiplier algebra is given as an hypothesis in the definition of the partial coaction. There, this extension follows directly from properties of continuous morphisms between $C^*$ algebras and the existence of approximate units. In the purely algebraic context it is not natural to assume a priori the existence of such an extension. Moreover, the equality in Proposition \ref{pro_bijetivcoacao} and the extension itself, in Proposition \ref{pro_extcoapart}, must be put by hand in their approach, while in our case these results are natural consequences of the definition.

\subsection{Examples of partial coactions}
\begin{pro}\label{coalambda} Let $\rho:R\rightarrow M(R\otimes A)$ be a linear map given by $\rho(x)=x\otimes m$, where $m\in M(A)$ and $m\neq 0$. Then $\rho$ is a partial coaction of $A$ to $R$ if and only if $m$ satisfies:
\begin{enumerate}
\item[(i)] $m^2=m$;
\item[(ii)] $m\otimes m=(m\otimes1)\Delta(m)$.
\end{enumerate}
Furthermore, the condition of symmetry of Definition \ref{def_comoalgparcarcial_multip} is equivalent to $m \otimes m= \Delta(m)(m \otimes 1)$.

	\end{pro}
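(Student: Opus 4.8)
The plan is to exhibit $E = 1_{M(R)}\otimes m$ as the idempotent witnessing the partial comodule structure and to check, for this choice, that each axiom of Definition \ref{def_comoalgparcarcial_multip} is equivalent to one of the stated conditions on $m$. This $E$ is the only reasonable candidate, as it should play the role of $\rho(1_{M(R)})$; I will also verify in the forward direction that any admissible $E$ is forced to equal $1\otimes m$.

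First the routine bookkeeping. Since $\rho(x)\rho(y) = (x\otimes m)(y\otimes m) = xy\otimes m^2$ while $\rho(xy) = xy\otimes m$, and $R^2\neq 0$ by nondegeneracy, $\rho$ is an algebra homomorphism exactly when $m^2 = m$; this is also precisely the idempotency of $E = 1\otimes m$, so condition (i) of the proposition is accounted for. Injectivity of $\rho$ is immediate from $m\neq 0$. Conditions (i) and (ii) of the definition then hold for free once $m\in M(A)$: one has $(1\otimes A)E = 1\otimes Am$ and $E(1\otimes A)= 1\otimes mA$, both inside $M(R)\otimes A$, and a direct computation gives $\rho(R)(1\otimes A) = R\otimes mA = E(R\otimes A)$ together with $(1\otimes A)\rho(R) = R\otimes Am = (R\otimes A)E$.

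The heart of the matter is condition (iii). Extending $\rho\otimes\imath$ and $\imath\otimes\Delta$ to the multiplier algebras, I compute $(\rho\otimes\imath)(\rho(x)) = x\otimes m\otimes m$, whereas $(E\otimes 1)(\imath\otimes\Delta)(\rho(x)) = (1\otimes m\otimes 1)(x\otimes\Delta(m)) = x\otimes(m\otimes 1)\Delta(m)$. Since $R\neq 0$, stripping the first tensor leg (by nondegeneracy, after covering with $1\otimes 1\otimes b$ to land in $R\otimes A\otimes A$) shows that condition (iii) is equivalent to $m\otimes m = (m\otimes 1)\Delta(m)$, which is condition (ii) of the proposition. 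The identical computation for condition (iv) replaces $(E\otimes 1)(\imath\otimes\Delta)(\rho(x))$ by $(\imath\otimes\Delta)(\rho(x))(E\otimes 1) = x\otimes\Delta(m)(m\otimes 1)$, so symmetry is equivalent to $m\otimes m = \Delta(m)(m\otimes 1)$, as claimed.

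The main obstacle is the forward direction, where a priori one is only handed \emph{some} idempotent $E$ and must identify it. From the algebra homomorphism property we already have $m^2 = m$. The explicit description of $E$ furnished by Proposition \ref{pro_bijetivcoacao}, applied to $\rho(x) = x\otimes m$, evaluates (using that $S$ is an anti-homomorphism) to $E(x\otimes b) = x\otimes mS(m)b$; by nondegeneracy of the product on $R\otimes A$ this pins down $E = 1_{M(R)}\otimes mS(m)$. Now Lemma \ref{rhoigual_rhoe}, in the form $\rho(x)E = \rho(x)$, combined with $m^2 = m$, yields $mS(m) = m$, so in fact $E = 1\otimes m$; with $E$ thus identified, condition (iii) becomes exactly the computation of the previous paragraph and returns $(m\otimes 1)\Delta(m) = m\otimes m$. (One can alternatively reach $mS(m)=m$ afterwards from the antipode identity $m(\imath\otimes S)((a\otimes 1)\Delta(b)) = \varepsilon(b)a$, since $\varepsilon(m)=1$ by Proposition \ref{pro_coavarepsi}.) I expect the genuinely delicate points throughout to be the legitimacy of the multiplier-level tensor manipulations, namely the extensions of $\rho\otimes\imath$ and $\imath\otimes\Delta$ to $M(\,\cdot\,)$, and the covering and nondegeneracy steps used to cancel the $R$- and $A$-legs.
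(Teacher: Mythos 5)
Your proposal is correct and takes essentially the same approach as the paper, whose entire proof reads ``The proof is immediate by taking $E=1\otimes m$.'' The only material you add beyond the paper is the explicit identification of $E$ in the forward direction via Proposition \ref{pro_bijetivcoacao} and Lemma \ref{rhoigual_rhoe}, a point the paper leaves implicit, and that argument is sound.
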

\begin{proof}
	The proof is immediate by taking $E=1\otimes m$.
\end{proof}	

\begin{rem} If $m\in M(A)$ satisfies the condition  $m\otimes m
	=(m\otimes1)\Delta(m)$, then $m^2=m$ if and only if $\varepsilon(m)=1_{\Bbbk}.$ 
\end{rem}

\begin{exa} Consider the algebra $A_G$ as in Example \ref{chato} and $R$ any algebra with a nondegenerate product. The linear map $\rho: R\longrightarrow M(R\otimes A_G)$ given by $\rho(x)=x\otimes m\in R\otimes M(A_G) \subseteq M(R \otimes A_G)$  is a symmetric partial coaction if and only if
	\begin{eqnarray*}
		m: G & \longrightarrow & \Bbbk\\
		g & \longmapsto & \left\{
		\begin{array}{rl}
			1, & \text{ if } g\in N\\
			0, & \text{ otherwise, }
		\end{array} \right.
	\end{eqnarray*}
	where $N$ is any subgroup of $G$.
	\label{ex_mlambda}
	\end{exa}

\begin{exa}\label{exestrito}
	Under the same above condition, $R$ is a symmetric partial $A_G$-comodule algebra via
\begin{eqnarray*}
\rho: R &\longrightarrow &R\otimes A_G\\
x &\longmapsto & x\otimes\delta_{1_G},
\end{eqnarray*}	
where $1_G$ denotes the identity element of $G$.
\end{exa}

The next proposition extends the notion of induced partial coaction, presented in \cite{MunizandBatista}, for the context of regular multiplier Hopf algebras.

\begin{pro}\textnormal{(Induced Partial Coaction)} \label{pro_coainduzido} Let $R$ be an $A$-comodule algebra via $\rho$ and $L$ a right ideal of $R$ with identity $1_L$. Then
	\begin{eqnarray*}
		\beta: L &\longrightarrow & M(L\otimes A)\\
		l &\longmapsto &\beta(l):=(1_L\otimes 1_{M(A)})\rho(l)=(1_L\otimes 1)\rho(l)
	\end{eqnarray*}
	is a partial coaction of $A$ on $L$. In this case, $\beta$ is called an induced partial coaction.
\end{pro}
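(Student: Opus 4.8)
The plan is to verify that $\beta$ satisfies the three conditions of Definition \ref{def_comoalgparcarcial_multip}, with the candidate idempotent $E := (1_L \otimes 1)\rho(1_L) \in M(L \otimes A)$. First I would check that $\beta$ is a well-defined injective algebra homomorphism: since $L$ is a right ideal with identity $1_L$, multiplication by $1_L \otimes 1$ is a projection onto $M(L \otimes A)$, and $\rho$ is an algebra homomorphism by Definition \ref{defcoacao}, so $\beta(l)\beta(l') = (1_L \otimes 1)\rho(l)(1_L\otimes 1)\rho(l')$; the key computation here is that $\rho(l)(1_L \otimes 1)$ reabsorbs the projection because $l \in L$ and $L$ is a right ideal, giving $\beta(ll')$. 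Injectivity follows from injectivity of $\rho$ together with $(\imath \otimes \varepsilon)\beta(l) = 1_L \cdot (\imath \otimes \varepsilon)\rho(l) = 1_L l = l$, using the global counit property (the Proposition after Definition \ref{defcoacao}).

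Next I would confirm that $E$ is idempotent and satisfies condition (i) of Definition \ref{def_comoalgparcarcial_multip}. Idempotency should reduce to $\rho(1_L)(1_L \otimes 1)\rho(1_L) = \rho(1_L)$, which again uses that $1_L 1_L = 1_L$ and the right-ideal property to collapse the inserted projection. Condition (i), namely $(1\otimes A)E \subseteq M(L)\otimes A$ and $E(1\otimes A)\subseteq M(L)\otimes A$, follows from the global comodule coaction properties $\rho(R)(1\otimes A) = R \otimes A = (1\otimes A)\rho(R)$ recorded after Proposition \ref{procoacaobij}, intersected with the left factor $1_L \otimes 1$. For condition (ii), $\beta(L)(1\otimes A) \subseteq E(L\otimes A)$, I would insert $1_L \otimes 1$ appropriately and use $\beta(l) = (1_L \otimes 1)\rho(l) = (1_L \otimes 1)\rho(1_L)\rho(l) = E\rho(l)$ (here $\rho(1_L l) = \rho(1_L)\rho(l)$), so that $\beta(l)(1\otimes a) = E\rho(l)(1\otimes a) \in E(L\otimes A)$ by the global property.

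The main obstacle will be condition (iii), the partial coassociativity $(\beta \otimes \imath)(\beta(l)) = (E\otimes 1)(\imath \otimes \Delta)(\beta(l))$. The strategy is to expand the left-hand side using $(\rho \otimes \imath)\rho = (\imath \otimes \Delta)\rho$ from the global Definition \ref{defcoacao}(ii), carefully tracking the two copies of the projector $1_L \otimes 1$ introduced by the two applications of $\beta$. Writing $\beta(l) = (1_L \otimes 1)\rho(l)$ and applying $\beta \otimes \imath$ introduces a factor $(1_L \otimes 1 \otimes 1)(\rho \otimes \imath)(\cdot)$; then $(\rho\otimes\imath)((1_L\otimes 1)\rho(l))$ must be rewritten using that $\rho$ is a homomorphism and the global coassociativity, after which the emerging term $(\rho(1_L)\otimes 1)$ should be matched against $E \otimes 1 = ((1_L\otimes 1)\rho(1_L))\otimes 1$. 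The delicate point is confirming that the leftover $1_L$-projectors on the first tensor leg agree on both sides and can be absorbed; this is where I would need the right-ideal hypothesis and Lemma \ref{lema_rhoigroe}-type absorption identities most heavily, and I expect to cover all multiplier expressions by a suitable element $b \in A$ on the last leg (as in equation \eqref{escritacoa1}) to keep everything inside $L \otimes A \otimes A$ rather than the multiplier algebra. Symmetry of $\beta$ need not be claimed, matching the statement, which asserts only that $\beta$ is a partial coaction.
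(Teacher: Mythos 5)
Your proposal is correct and follows exactly the paper's route: the paper's entire proof consists of choosing the same idempotent $E=(1_L\otimes 1)\rho(1_L)$ and declaring the verifications routine, while you supply the (correct) details — in particular the absorption identity $1_L r 1_L = 1_L r$ coming from $1_L R\subseteq L$ together with $1_L$ being a right unit for $L$, which is indeed the one point where the right-ideal hypothesis is genuinely used for multiplicativity, idempotency of $E$, and the partial coassociativity.
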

\begin{proof}
	It easily follows by taking $E=(1_L\otimes 1)\rho(1_L)\in M(L\otimes A)$.
\end{proof}

Observe that, if the algebra $L$ is a bilateral ideal of $R$, then its unit $1_L$ is a central idempotent element in $R$. In this case, the induced partial coaction is symmetric.

\begin{exa} Consider $A_G$ as the $A_G$-comodule algebra via its coproduct $\Delta$, where $A_G$ was defined in Example \ref{chato}. Take $N$ a finite subgroup of $G$, $f_N=\sum\limits_{n\in N}\delta_n\in A_G$ a central idempotent  and $L=f_NA_G$. By Proposition \ref{pro_coainduzido}, $L$ is a symmetric partial $A_G$-comodule algebra via
	\begin{eqnarray*}
		\beta: L&\longrightarrow & M(L\otimes A)\\
		f_N\delta_p &\longmapsto &(f_N\otimes 1)\Delta(f_N\delta_p),
	\end{eqnarray*}
	where $E=(f_N\otimes 1)\Delta(f_N)$. Note that $\beta$ is not global, since given $h\in N$ and $p\in G$, such that $p \notin N$,
	\begin{eqnarray*}
		E(h\otimes p)=(f_N\otimes 1)\Delta(f_N)(h\otimes p)=\sum_{m,n\in N} \delta_m(h)\delta_n(hp)=0,
	\end{eqnarray*}
	and, on the other hand, 
	$$(1_S\otimes 1)(h \otimes p)=(\sum\limits_{n\in N}\delta_n\otimes 1)(h\otimes p)=\sum\limits_{n\in N}\delta_n(h)=1,$$
	which ensures that $E\neq (1_S \otimes 1)$.
	\label{ex_concretstrito}
\end{exa}
  
\section{Partial Actions}

\subsection{Global actions}

Let $A$ be a regular multiplier Hopf algebra and $R$ a nondegenerated algebra. We start recalling the definition of a (global) module algebra and some of its properties that we will need in the text. The stated definitions and propositions are from reference \cite{Action}.

\begin{defi} We call $R$ a \textit{left $A$-module algebra} if there exists a linear map
		\begin{eqnarray*}
	\triangleright : A\otimes R &\longrightarrow &R\\
	a\otimes x &\longmapsto & a\triangleright x
\end{eqnarray*}
satisfying: 
		\begin{enumerate}
		\item[(i)] $a\triangleright b\triangleright x=ab\triangleright x,\,\,\,\text{for all}\,\,\, a,b\in A\,\,\,\text{and}\,\,\, x\in R$;
		\item[(ii)] $R$ is \emph{unitary}, that is, $A\triangleright R=R$;
		\item[(iii)] $a\triangleright (xy)=(a_{(1)}\triangleright x)(a_{(2)}\triangleright y)$, for all $x,y\in R$ and $a\in A$.
	\end{enumerate}
	\label{defmodalgebra}
\end{defi}
 In this case, the map $\triangleright$ is called the \emph{action} of $A$ on $R$.

The identity (iii) make sense, since we have that $R$ is a unitary, then, for all $y\in R$, we can write $y=\sum_i b_i\triangleright y_i$, thus
\begin{eqnarray}
	a\triangleright (xy)&=& a\triangleright ( x(\sum_ib_i\triangleright y_i)) \nonumber\\
	&=&\sum_i(a_{(1)}\triangleright x)(a_{(2)}b_i\triangleright y_i) \label{global10} \\ 
	&=& (a_{(1)}\triangleright x)(a_{(2)}\triangleright y), \nonumber
\end{eqnarray}
for every $a\in A$, $x\in R.$

We say that $\triangleright$ is \emph{nondegenerate} if the following holds: $A\triangleright x=0$ if and only if $x=0$. In particular, the action of $A$ on itself via its multiplication is nondegenerate.
\begin{pro} If $R$ is a unitary left $A$-module algebra then the action of $A$ on $R$ is nondegenerate.
	
	\label{acaodegenerada}
\end{pro}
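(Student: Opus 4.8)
The plan is to prove the only nontrivial implication, namely $A\triangleright x=0 \Rightarrow x=0$; the reverse implication $x=0\Rightarrow A\triangleright x=0$ is immediate from linearity of $\triangleright$. So I would fix $x\in R$ with $a\triangleright x=0$ for \emph{every} $a\in A$, and aim to conclude $x=0$.

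First I would exploit the hypothesis that $R$ is unitary. Since $A\triangleright R=R$, the element $x$ itself admits a finite presentation $x=\sum_{i=1}^{n} a_i\triangleright r_i$ with $a_i\in A$ and $r_i\in R$. The guiding idea is that, although $A$ carries no global unit, this finite list $a_1,\dots,a_n$ can be absorbed by a single element of $A$ acting as a local identity. Concretely, I would invoke the existence of bilateral local units in the multiplier Hopf algebra $A$, recalled in Section~2: there is $e\in A$ such that $e a_i=a_i$ for all $1\le i\le n$.

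Applying $e\triangleright(-)$ to the presentation of $x$ and using axiom (i), $a\triangleright(b\triangleright x)=ab\triangleright x$, I would then compute
\[
e\triangleright x=\sum_i e\triangleright(a_i\triangleright r_i)=\sum_i (e a_i)\triangleright r_i=\sum_i a_i\triangleright r_i=x .
\]
In other words, every element of a unitary module is fixed by \emph{some} element of $A$ acting through $\triangleright$. Combining this with the hypothesis $A\triangleright x=0$ finishes the argument: since $e\in A$ we have $e\triangleright x=0$, while the display above gives $e\triangleright x=x$, whence $x=0$, as desired.

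The main obstacle—indeed essentially the only subtlety—is the passage from the unitary condition $A\triangleright R=R$ to a single $e\in A$ fixing $x$: one must first choose a finite presentation of $x$ and then cover \emph{all} the occurring factors $a_i$ simultaneously by one local unit. This is exactly the point where the local-unit property of the multiplier Hopf algebra $A$ is used, and once this observation is in place the remainder of the proof is purely formal.
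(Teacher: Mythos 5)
Your proof is correct, and it is essentially the argument the paper relies on: the paper states this proposition without proof (it is recalled from the reference on actions of multiplier Hopf algebras), but the remark immediately following it records exactly your key step, namely that a finite family of elements of a unitary module is fixed by a single local unit $e\in A$ acting via $\triangleright$. Your derivation of that fact from $A\triangleright R=R$, the local units of $A$, and the axiom $a\triangleright(b\triangleright x)=ab\triangleright x$ is the standard and intended route.
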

\begin{rem} If $R$ is a unitary left $A$-module algebra, then given $a_1, ..., a_n\in A$ and $x_1, x_2, ..., x_m\in R$ there exists an element $e\in A$ such that $ea_i=a_i=a_ie$, for all $1\leq i\leq n$, and $e\triangleright x_j=x_j$, for all $1\leq j\leq m$.
	\label{obs_unidacao}
\end{rem}

\begin{pro} Assume that $A$ is a regular multiplier Hopf algebra and $R$ is a left $A$-module algebra. Then the action of $A$ on $R$ can be uniquely extended to a nondegenerate  action of $A$ on $M(R)$ as follows:
	\begin{eqnarray*}
		(a\triangleright m)x&=&a_{(1)}\triangleright(m(S(a_{(2)})\triangleright x)),\\
		x(a\triangleright m)&=&  a_{(2)}\triangleright ((S^{-1}(a_{(1)})\triangleright x)m),
	\end{eqnarray*}
	for all $a\in A$, $m\in M(R)$ and $x\in R$. Moreover, $a\triangleright 1_{M(R)}=\varepsilon(a)1_{M(R)}$, for all $a\in A$.
	\label{pro_extacao}
\end{pro}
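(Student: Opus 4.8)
The plan is to construct, for each $a\in A$ and $m\in M(R)$, a multiplier $a\triangleright m\in M(R)$ by prescribing its left and right multiplications through the two displayed formulas, and then to verify in turn that (a) these prescriptions are well defined, (b) they assemble into a genuine element of $M(R)$, (c) the resulting map is an action that restricts to $\triangleright$ on $R$ and sends $1_{M(R)}$ to $\varepsilon(a)1_{M(R)}$, and (d) this action is nondegenerate and is the unique extension with these properties.

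First I would check that the formulas make sense. Reading $a_{(1)}\triangleright(m(S(a_{(2)})\triangleright y))$ by covering, write $y=\sum_j b_j\triangleright y_j$ with $b_j\in A$ (possible since $R$ is unitary); then $S(a_{(2)})\triangleright y=\sum_j (S(a_{(2)})b_j)\triangleright y_j$, and the leg $a_{(1)}\otimes S(a_{(2)})b_j=(\imath\otimes S)((1\otimes S^{-1}(b_j))\Delta(a))$ lies in $A\otimes A$ because regularity guarantees $(1\otimes c)\Delta(a)\in A\otimes A$ and $S(A)=A$. As $m\in M(R)$ and $S(a_{(2)})\triangleright y\in R$, the product $m(S(a_{(2)})\triangleright y)$ lies in $R$, so $y\mapsto (a\triangleright m)y$ is a well-defined linear endomorphism of $R$; the map $x\mapsto x(a\triangleright m)$ is handled symmetrically, using $S^{-1}$ in place of $S$.

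The main obstacle is to prove that these two maps are the components of an honest multiplier, i.e. that $x\cdot\big((a\triangleright m)y\big)=\big(x(a\triangleright m)\big)\cdot y$ for all $x,y\in R$. I plan to establish this through the bridging element
\[
E=a_{(2)}\triangleright\big[(S^{-1}(a_{(1)})\triangleright x)\,m\,(S(a_{(3)})\triangleright y)\big]\in R,
\]
where coassociativity is used to write $\Delta^{(2)}(a)=a_{(1)}\otimes a_{(2)}\otimes a_{(3)}$. Expanding $E$ with the module-algebra rule (iii) after grouping the bracket as $(S^{-1}(a_{(1)})\triangleright x)\cdot\big(m(S(a_{(3)})\triangleright y)\big)$ and then collapsing the first two legs through $\sum a_{(2)}S^{-1}(a_{(1)})=\varepsilon(a)1_{M(A)}$ (together with $1_{M(A)}\triangleright x=x$) yields $x\cdot\big(a_{(1)}\triangleright(m(S(a_{(2)})\triangleright y))\big)=x\cdot\big((a\triangleright m)y\big)$. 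Grouping instead as $\big((S^{-1}(a_{(1)})\triangleright x)m\big)\cdot(S(a_{(3)})\triangleright y)$ and collapsing the last two legs through $\sum a_{(1)}S(a_{(2)})=\varepsilon(a)1_{M(A)}$ yields $\big(a_{(2)}\triangleright((S^{-1}(a_{(1)})\triangleright x)m)\big)\cdot y=\big(x(a\triangleright m)\big)\cdot y$. Since both computations evaluate the same $E$, compatibility follows and $a\triangleright m\in M(R)$. The careful leg-counting in these two counit collapses is where the real work lies.

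It then remains to record the structural properties, each a short computation of the same flavour. For $m\in R$, applying (iii) to $(a\triangleright m)y=a_{(1)}\triangleright(m(S(a_{(2)})\triangleright y))$ and collapsing via $\sum b_{(1)}S(b_{(2)})=\varepsilon(b)1$ recovers $(a\triangleright m)\cdot y$ with the original action, so the extension restricts correctly on $R\subseteq M(R)$; taking $m=1_{M(R)}$ gives $(a\triangleright 1_{M(R)})y=(a_{(1)}S(a_{(2)}))\triangleright y=\varepsilon(a)y$, i.e. $a\triangleright 1_{M(R)}=\varepsilon(a)1_{M(R)}$. Multiplicativity $a\triangleright(b\triangleright m)=ab\triangleright m$ I would verify on left multiplications: expanding $(a\triangleright(b\triangleright m))y=a_{(1)}\triangleright\big((b\triangleright m)(S(a_{(2)})\triangleright y)\big)$ by the definition of $b\triangleright m$, using the anti-multiplicativity of $S$ and $\Delta(ab)=\Delta(a)\Delta(b)$, reproduces $(ab)_{(1)}\triangleright(m(S((ab)_{(2)})\triangleright y))=(ab\triangleright m)y$. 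For uniqueness, any extension $\tilde\triangleright$ satisfying the Leibniz rule is forced to obey the displayed formulas, since $m(S(a_{(2)})\triangleright y)\in R$ gives $a_{(1)}\tilde\triangleright\big(m(S(a_{(2)})\triangleright y)\big)=(a_{(1)}\tilde\triangleright m)\big(a_{(2)}S(a_{(3)})\triangleright y\big)=(a\tilde\triangleright m)y$, which is exactly the prescription for left multiplication. Finally, for nondegeneracy, assuming $a\triangleright m=0$ for all $a$, I would fix $z\in R$, choose a local unit $e$ with $e\triangleright(mz)=mz$, and use the surjectivity of the canonical map $A\otimes R\to A\otimes R$, $a\otimes y\mapsto a_{(1)}\otimes(S(a_{(2)})\triangleright y)$ (a consequence of regularity and the bijectivity results for unitary module algebras in \cite{Action}) to write $\sum_i a^i_{(1)}\otimes(S(a^i_{(2)})\triangleright y_i)=e\otimes z$; then $0=\sum_i a^i_{(1)}\triangleright\big(m(S(a^i_{(2)})\triangleright y_i)\big)=e\triangleright(mz)=mz$, and symmetrically $zm=0$, whence $m=0$.
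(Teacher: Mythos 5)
The paper states Proposition \ref{pro_extacao} without proof, quoting it from Drabant--Van Daele--Zhang \cite{Action}, so there is no in-paper argument to compare against; your proposal is a correct reconstruction of the standard proof from that reference. The key points all check out: the bridging element $a_{(2)}\triangleright\bigl[(S^{-1}(a_{(1)})\triangleright x)\,m\,(S(a_{(3)})\triangleright y)\bigr]$ with its two groupings and the counit collapses $a_{(2)}S^{-1}(a_{(1)})=\varepsilon(a)1$ and $a_{(1)}S(a_{(2)})=\varepsilon(a)1$ is exactly the right mechanism for the multiplier compatibility, the covering via $y=\sum_j b_j\triangleright y_j$ and regularity is handled correctly, and the nondegeneracy argument via the bijectivity of $a\otimes b\mapsto a_{(1)}\otimes S(a_{(2)})b$ (the inverse of $T_1$) is sound.
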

Nevertheless, in general $M(R)$ is not unitary. 

\vu

\subsection{Partial actions}
Throughout the text partial actions of $A$ on $R$ will be always consider on the left. The partial actions on the right are defined in a similar way. We start the section with the classical definition of partial action, i.e., in the case that $A$ and $R$ are both unital, according to \cite{Caenepeel}.

\begin{defi} \label{part_mod_algHopf} Let $A$ be a Hopf algebra and $R$ a unital algebra. We say that $R$ is a \emph{partial $A$-module algebra} if there exists a linear map $\cdot:A \otimes R \to R$ such that, for all $x,y \in R$ and $a,b \in A$:
		\begin{itemize}
			\item[(i)] $1_A \cdot x = x$;
			\item[(ii)] $a \cdot (xy) = (a_{(1)} \cdot x)(a_{(2)} \cdot y)$;
		\item[(iii)] $a \cdot (b \cdot x) = (a_{(1)} \cdot 1_R)(a_{(2)}b \cdot x)$.
		\end{itemize}
	
	In this case, the linear map $\cdot$ is called a 	\emph{partial action} of $A$ on $R$.
	Furthermore, we say that the partial action is \emph{symmetric} (or, $R$ is a \emph{symmetric partial $A$-module algebra}) if the additional condition also holds:
	
	\begin{itemize}
		\item[(iv)] $a \cdot (b \cdot x) = (a_{(1)}b \cdot x)(a_{(2)} \cdot 1_R)$.
	\end{itemize}
\end{defi}

	Observe that, assuming the condition (i), the conditions (ii) and (iii) in Definition \ref{part_mod_algHopf} are
	equivalent to $$a \cdot (x(b \cdot y)) = (a_{(1)} \cdot x)(a_{(2)}b \cdot y).$$

For the context where both algebras are nonunital, clearly, items (i) and (iii) do not make sense anymore. In addition, the right hand side of the item (ii) is also not well defined for the context of multiplier Hopf algebras. In this way, for extending the notion of partial actions of a regular multiplier Hopf algebra $A$ on an algebra $R$ with a nondegenerate product, we need some extra conditions.

 \begin{defi}\label{def27}
 Let $A$ be a regular multiplier Hopf algebra and $R$ a nondegenerate algebra. A triple $(R, \cdot, \e)$ is a \emph{partial $A$-module algebra} if $\cdot$ is a linear map
 		\begin{eqnarray*}
 			\cdot: A\otimes R & \longrightarrow & R\\
 			a\otimes x & \longmapsto & a\cdot x
 		\end{eqnarray*}
 and $\e$ is a linear map $\e: A\longrightarrow M(R)$ satisfying the following conditions, for all $a,b\in A$ and $x,y\in R$:
 \begin{enumerate}
 	\item[(i)] $a\cdot (x(b\cdot y)) = (a_{(1)}\cdot x)(a_{(2)}b\cdot y)$;
 	
 	\vu
 	
 	\item[(ii)] $\mathfrak{e}(a)(b\cdot x)=a_{(1)}\cdot (S(a_{(2)})b\cdot x)$ and $\mathfrak{e}(A)R\subseteq A\cdot R$;
 	
 	\vu
 	
 	\item[(iii)] given $a_1,...,a_n\in A$ and  $x_1, ..., x_m\in R$ there exists $b\in A$ such that $a_ib=a_i=ba_i$ and $a_i\cdot x_j=a_i\cdot (b\cdot x_j)$, for all $1\leq i\leq n$ and $1\leq j\leq m$;
 	
 	\vu
 	
 	\item[(iv)] $A\cdot x=0$ if and only if $x=0$, that is, $\cdot$ is a \emph{nondegenerate action}.
 \end{enumerate}

 Under these conditions, the map $\cdot$ is called a \emph{partial action} of $A$ on $R$, and we say that it is \emph{symmetric} if the following additional conditions also hold:
 \begin{enumerate}
 \item[(v)] $a\cdot ((b\cdot x)y)=(a_{(1)}b\cdot x)(a_{(2)}\cdot y)$;
 
 \vu
 
 \item[(vi)] $(b\cdot x)\mathfrak{e}(a)=a_{(2)}\cdot(S^{-1}(a_{(1)})b\cdot x)$;
 
 \vu
 
 \item[(vii)]$R \mathfrak{e}(A)\subseteq A \cdot R $,
 \end{enumerate}
 for all $x,y\in R$ and $a,b\in A$.
 \label{def_acaoparcialmulti}
 \end{defi}

\begin{rem} Some considerations must be made about Defintion \ref{def27}:
	\begin{enumerate}
		\item The regularity condition is needed for the symmetry to make sense, since items (ii) and (vi) require that $S(A)=A$.
		
		\item The motivation to define the linear map $\e$ of the item (ii) was inspired on the partial representation theory presented in \cite{representation}. It was necessary to overcome the issue that appears in the expression $(a \cdot 1_{R})$, in Definition \ref{part_mod_algHopf}. We call attention to the fact that we could not directly define the liner map $\mathfrak{e}$ as $\mathfrak{e}(a)(x)=a_{(1)}\cdot (S(a_{(2)}) \cdot x)$, since the right side of the equality does not make sense whereas in the case of multiplier Hopf algebras $\Delta(a)$ does not lie in $A \otimes A$. Recall that the Sweedler notation needs an additional element in $A$ to cover $\Delta(a)$.

		\item A natural condition to be required would be $ A \cdot R = R $. But, with this condition it is not possible to show equivalence between Definitions	\ref{def_acaoparcialmulti} and \ref{part_mod_algHopf} when $A$ and $R$ are both unital. Moreover, $ A \cdot R = R $ does not imply that the partial action is nondegenerate. In this way, the items (ii), (iii) and (iv) are added to the definition aiming to overcome these problems, as we shall see in the following result.
		
	\end{enumerate}	
\end{rem}

\begin{pro}
If $A$ and $R$ are unital algebras, then the Definitions \ref{part_mod_algHopf} and \ref{def_acaoparcialmulti} are equivalent.	
\end{pro}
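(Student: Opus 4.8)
The plan is to prove the equivalence in both directions through the correspondence $\mathfrak{e}(a)=a\cdot 1_R$, which is legitimate because for a unital $R$ the canonical map $R\to M(R)$ is an isomorphism, so $\mathfrak{e}\colon A\to M(R)=R$ is simply a map into $R$. Throughout I would freely use that $1_A\cdot x=x$ (available in Definition \ref{part_mod_algHopf}) forces $A\cdot R=R$.

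First I would show that Definition \ref{part_mod_algHopf} implies Definition \ref{def_acaoparcialmulti} with $\mathfrak{e}(a)=a\cdot 1_R$. Item (i) of Definition \ref{def_acaoparcialmulti} is precisely the identity $a\cdot(x(b\cdot y))=(a_{(1)}\cdot x)(a_{(2)}b\cdot y)$ recorded right after Definition \ref{part_mod_algHopf} as being equivalent, under item (i) there, to items (ii) and (iii). For the first equation in item (ii), I would apply item (iii) of Definition \ref{part_mod_algHopf} to $a_{(1)}\cdot(S(a_{(2)})b\cdot x)$ and then collapse the inner antipode term via $\sum a_{(1)}\otimes a_{(2)}S(a_{(3)})=a\otimes 1_A$, arriving at $(a\cdot 1_R)(b\cdot x)=\mathfrak{e}(a)(b\cdot x)$; the inclusions $\mathfrak{e}(A)R\subseteq A\cdot R$ in item (ii) and $R\mathfrak{e}(A)\subseteq A\cdot R$ in item (vii) are then automatic since $A\cdot R=R$. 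Items (iii) and (iv) of Definition \ref{def_acaoparcialmulti} follow immediately by choosing the local unit $b=1_A$ and using the nondegeneracy that $1_A\cdot x=x$ provides.

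The reverse implication is the delicate one, and its crux, which I expect to be the main obstacle, is recovering $1_A\cdot x=x$ from Definition \ref{def_acaoparcialmulti} alone. My idea is to feed the list $\{1_A,a\}$ together with $\{x\}$ into item (iii): the requirement $a_i b=a_i=ba_i$ applied to the entry $1_A$ forces $b=1_A$, so the local unit whose existence is guaranteed must be $1_A$ itself, and item (iii) then yields $a\cdot x=a\cdot(1_A\cdot x)$ for every $a\in A$ and $x\in R$. Consequently $a\cdot(x-1_A\cdot x)=0$ for all $a$, and the nondegeneracy in item (iv) gives $1_A\cdot x=x$, which is item (i) of Definition \ref{part_mod_algHopf}. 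With this established, item (ii) of Definition \ref{part_mod_algHopf} follows from item (i) of Definition \ref{def_acaoparcialmulti} by setting $b=1_A$, and item (iii) of Definition \ref{part_mod_algHopf} by setting $x=1_R$; moreover, evaluating item (ii) at $1_R$ shows that $\mathfrak{e}$ is forced to be $a\mapsto a\cdot 1_R$, so the two structures genuinely match.

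Finally, the symmetric conditions fall out cheaply once this dictionary is in place. Assuming item (iv) of Definition \ref{part_mod_algHopf}, I would obtain item (v) of Definition \ref{def_acaoparcialmulti} by expanding $a\cdot((b\cdot x)y)$ first through item (ii) and then item (iv), reabsorbing $(a_{(2)}\cdot 1_R)(a_{(3)}\cdot y)=a_{(2)}\cdot y$; and item (vi) by applying item (iv) to $a_{(2)}\cdot(S^{-1}(a_{(1)})b\cdot x)$ and collapsing $\sum a_{(2)}S^{-1}(a_{(1)})\otimes a_{(3)}=1_A\otimes a$, which lands on $(b\cdot x)\mathfrak{e}(a)$. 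Conversely, item (iv) of Definition \ref{part_mod_algHopf} is recovered at once from item (v) of Definition \ref{def_acaoparcialmulti} by putting $y=1_R$. The only genuinely subtle point in the whole argument is the passage through nondegeneracy to secure $1_A\cdot x=x$; all remaining verifications are routine bookkeeping with the counit and antipode axioms.
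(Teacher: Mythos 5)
Your proposal is correct and follows essentially the same route as the paper: the forward direction takes $\mathfrak{e}(a)=a\cdot 1_R$, and the converse hinges on exactly the paper's trick of feeding $1_A$ into condition (iii) of Definition \ref{def_acaoparcialmulti} to force the local unit to be $1_A$, then invoking nondegeneracy (iv) to get $1_A\cdot x=x$. The extra verifications you supply (the antipode collapse for item (ii), the symmetric conditions, and the observation that $\mathfrak{e}$ is forced to equal $a\mapsto a\cdot 1_R$) are correct elaborations of details the paper leaves implicit.
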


\begin{proof}
Indeed, Definition \ref{part_mod_algHopf} implies Definition \ref{def_acaoparcialmulti} taking the linear map $\mathfrak{e}: A\longrightarrow M(R)=R$ given by $\mathfrak{e}(a)=a\cdot 1_R$, for all $a\in A$. Conversely, it is enough to check that $1_A\cdot x=x$, for all $x\in R$. To do this take $a, 1_A\in A$ and $x\in R$. By (iii) of Definition \ref{def_acaoparcialmulti} there exists an element $b\in A$ such that $ba=a=ab$, $b1_A=1_A=1_Ab$ and $a\cdot b\cdot x=a\cdot x$. However, $1_A$ is the identity element of $A$, hence $b=1_Ab=1_A$ and $a\cdot 1_A\cdot x=a\cdot x$. Then we have $a\cdot 1_A\cdot x=a\cdot x$, for all $a\in A$ and using (iv) of Definition \ref{def_acaoparcialmulti} we conclude $1_A\cdot x=x$, for all $x\in R$.
\end{proof}

It is immediate to check that any (global) action is a particular example of a partial action with the linear map $\mathfrak{e}: A\longrightarrow M(R)$ defined by  $\e(a)= \varepsilon(a)1_{M(R)}$, for all $a \in A$.
The next proposition characterizes under what condition a partial action is a global one.

\begin{pro}
Assume that  $(R, \cdot, \e)$ is a partial $A$-module algebra. Then $R$ is an $A$-module algebra if and only if $\mathfrak{e}(a)=\varepsilon(a)1_{M(R)}$, for all $a \in A$.
\end{pro}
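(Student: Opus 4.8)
The plan is to prove the two implications separately, using condition (ii) of Definition~\ref{def_acaoparcialmulti} as the bridge between the linear map $\mathfrak{e}$ and the action $\cdot$. Only the non-symmetric axioms (i)--(iv) will be needed. The recurring computational device is the antipode relation $\sum_{(a)} a_{(1)} S(a_{(2)}) b = \varepsilon(a) b$, valid in any regular multiplier Hopf algebra; this is precisely the covering that makes the right-hand side of (ii), namely $a_{(1)}\cdot(S(a_{(2)})b\cdot x)$, collapse as soon as the action is multiplicative.

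Forward direction. Suppose $R$ is an $A$-module algebra via $\cdot$ in the sense of Definition~\ref{defmodalgebra}. First I would rewrite the right-hand side of (ii) using the module-algebra axiom $a\triangleright(c\triangleright x)=ac\triangleright x$: this turns $a_{(1)}\cdot(S(a_{(2)})b\cdot x)$ into $\big(\sum_{(a)} a_{(1)} S(a_{(2)}) b\big)\cdot x=\varepsilon(a)(b\cdot x)$. Hence $\mathfrak{e}(a)(b\cdot x)=\varepsilon(a)(b\cdot x)$ for all $b\in A$, $x\in R$. Since an $A$-module algebra is unitary, $A\cdot R=R$, so the elements $b\cdot x$ span $R$ and therefore $\mathfrak{e}(a)\,r=\varepsilon(a)\,r$ for every $r\in R$. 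As $\mathfrak{e}(a)\in M(R)$ and a multiplier on a nondegenerate algebra is determined by its left multiplication, this forces $\mathfrak{e}(a)=\varepsilon(a)1_{M(R)}$.

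Reverse direction. Assume $\mathfrak{e}(a)=\varepsilon(a)1_{M(R)}$ for all $a$, and verify the three axioms of Definition~\ref{defmodalgebra}. For \emph{unitarity}, the condition $\mathfrak{e}(A)R\subseteq A\cdot R$ reads $R\subseteq A\cdot R$ once I note that $\varepsilon$ is surjective (otherwise the counit identity would give $A^2=0$, contradicting $A^2=A$): picking $a_0$ with $\varepsilon(a_0)=1$ gives $\mathfrak{e}(a_0)R=R$, whence $A\cdot R=R$. For \emph{associativity} $c\cdot(d\cdot x)=cd\cdot x$, condition (ii) now reads $\sum_{(a)} a_{(1)}\cdot(S(a_{(2)})b\cdot x)=\varepsilon(a)(b\cdot x)$, and I would use that $\Phi\colon a\otimes b\mapsto \sum_{(a)} a_{(1)}\otimes S(a_{(2)})b$ is surjective onto $A\otimes A$ (a consequence of the regularity of $A$, via bijectivity of $S$ and of the canonical maps $T_1,T_2$). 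Writing $c\otimes d=\sum_i\Phi(a_i\otimes b_i)$ and using bilinearity of $(c,d)\mapsto c\cdot(d\cdot x)$, condition (ii) yields $c\cdot(d\cdot x)=\sum_i\varepsilon(a_i)\,b_i\cdot x$; on the other hand $cd=\sum_i\sum_{(a_i)}a_{i(1)}S(a_{i(2)})b_i=\sum_i\varepsilon(a_i)b_i$ by the antipode relation, so $cd\cdot x$ equals the same expression. Finally, \emph{multiplicativity} $a\cdot(xy)=(a_{(1)}\cdot x)(a_{(2)}\cdot y)$ follows from the associativity just established together with condition (i) of Definition~\ref{def_acaoparcialmulti} and unitarity, exactly as in the computation \eqref{global10}: write $y=\sum_i b_i\cdot y_i$ and expand $a\cdot\big(x(b_i\cdot y_i)\big)$ via (i).

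Main obstacle. The delicate point is the associativity step of the reverse direction, where I must be sure that $\Phi\colon a\otimes b\mapsto\sum a_{(1)}\otimes S(a_{(2)})b$ is genuinely surjective with all coverings landing in $A\otimes A$ (this is where regularity, as opposed to the classical Hopf case, is essential), and that the antipode relation $\sum a_{(1)}S(a_{(2)})b=\varepsilon(a)b$ is applied with the correct covering so that $\sum a_{(1)}S(a_{(2)})$ may be treated as the multiplier $\varepsilon(a)1_{M(A)}$. Everything else is a routine transcription of the classical unital argument.
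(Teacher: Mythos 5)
Your proof is correct, and in the forward direction, the unitarity argument, and the multiplicativity step it coincides with the paper's proof. The one place where you genuinely diverge is the associativity step $a\cdot(b\cdot x)=ab\cdot x$ of the reverse direction. The paper handles it by a local covering trick: it picks $c\in A$ with $\varepsilon(c)=1_{\Bbbk}$, inserts the scalar $\varepsilon(ca_{(2)})$ to cover the second Sweedler leg, expands it as $S(c_{(1)}a_{(2)})c_{(2)}a_{(3)}$, and collapses everything to $a_{(1)}\cdot(S(a_{(2)})a_{(3)}b\cdot x)=\mathfrak{e}(a_{(1)})(a_{(2)}b\cdot x)=ab\cdot x$, reading axiom (ii) of Definition \ref{def_acaoparcialmulti} backwards. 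You instead decompose an arbitrary $c\otimes d$ through the map $\Phi(a\otimes b)=a_{(1)}\otimes S(a_{(2)})b$ and compare both sides termwise, reading axiom (ii) forwards. Both arguments rest on the same canonical bijection, since your $\Phi$ is exactly $T_1^{-1}$ (one checks $T_1\circ\Phi=\Phi\circ T_1=\imath$ using $a_{(1)}S(a_{(2)})b=\varepsilon(a)b$); the paper uses it implicitly through the Sweedler calculus, while you invoke it explicitly. This also disposes of the concern in your final paragraph: surjectivity of $\Phi$ and the legitimacy of the coverings in $\sum_i a_{i(1)}\otimes S(a_{i(2)})b_i$ are automatic, because $T_1$ is bijective by the definition of a multiplier Hopf algebra and $S(A)=A$ by regularity. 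Your version makes the covering issue visible and isolates it in one clean statement; the paper's version is shorter and purely computational. Either is acceptable.
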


\begin{proof}
	Suppose that $R$ is an $A$-module algebra, then by (ii) of Definition \ref{def_acaoparcialmulti} we have
	\begin{eqnarray*}
		\mathfrak{e}(a)(b\cdot x)&=&a_{(1)}\cdot(S(a_{(2)})b\cdot x)\\
		&=& a_{(1)}S(a_{(2)})b\cdot x\\
		&=& \varepsilon(a)b\cdot x\\
		&=&\varepsilon(a)1_{M(R)}(b\cdot x),
	\end{eqnarray*}
	for all $a,b\in A$ and $x\in R$. Therefore, as the action is unitary it follows that $\mathfrak{e}(a)x=\varepsilon(a)1_{M(R)}x$, for all $x\in R$ and, consequently, $\mathfrak{e}(a)=\varepsilon(a)1_{M(R)}$, for all $a\in A$.
	
	\vu
	
	Conversely, take $c\in A$ such that $\varepsilon(c)=1_{\Bbbk}$ and $x\in R$. Then
	\begin{eqnarray*}
	a\cdot (b\cdot x)&=& a_{(1)}\cdot(\varepsilon(ca_{(2)})b\cdot x)\nonumber\\
	&=& a_{(1)}\cdot (S(c_{(1)}a_{(2)})c_{(2)}a_{(3)}b\cdot x)\nonumber\\
	&=& a_{(1)}\cdot (S(a_{(2)})S(c_{(1)})c_{(2)}a_{(3)}b\cdot x)\nonumber\\
	&=& a_{(1)}\cdot (S(a_{(2)})\varepsilon(c)a_{(3)}b\cdot x)\nonumber\\
	&=& a_{(1)}\cdot (S(a_{(2)})a_{(3)}b\cdot x)\nonumber\\
	&\stackrel{\ref{def_acaoparcialmulti}(ii)}{=}&\mathfrak{e}(a_{(1)})(a_{(2)}b\cdot x)\nonumber\\
	&=& \varepsilon(a_{(1)})1_{M(R)}(a_{(2)}b\cdot x)\nonumber\\
	&=& ab\cdot x,
	\label{eqcoberturacom_c}
	\end{eqnarray*}
	for all $a,b\in A$. 
	
Furthermore, by condition (ii) of Definition \ref{def_acaoparcialmulti} we have that $x= \varepsilon(c)1_{M(R)}x=\mathfrak{e}(c)x\in A\cdot R$,  for every $x\in R$. Hence, $R$ is unitary. 

\vu

Finally, given $x,y\in R$ with $y= \sum_i b_i\cdot y_i$ (notice that $R=A\cdot R)$, then 
\begin{eqnarray*}
	a\cdot (xy)&=&a\cdot (x(\sum_ib_i\cdot y_i))\\
	&\stackrel{\ref{def_acaoparcialmulti}(i)}{=}& \sum_i (a_{(1)}\cdot x)(a_{(2)}b_i\cdot y_i)\\
	&=& (a_{(1)}\cdot x)(a_{(2)}\cdot y),
\end{eqnarray*}
for all $a\in A$. Therefore, $R$ is an $A$-module algebra.
\end{proof}

\subsection{Examples of partial actions}
The following result give us the necessary and sufficient conditions to provide a family of  examples of partial actions.

\begin{pro}
	Let $A$ be a regular multiplier Hopf algebra, $R$ a nondegenerate algebra and $\lambda: A \longrightarrow \Bbbk$ be a linear map. Then
	\begin{eqnarray*}
		\cdot: A \otimes R & \longrightarrow & R\\
		a \otimes x & \longmapsto & a \cdot x = \lambda(a)x
	\end{eqnarray*}
and $\mathfrak{e}(a)=\lambda(a)1_{M(R)}$ is a partial action of $A$ on $R$ if and only if
	\begin{enumerate}
		\item[(i)] $\lambda(a) \lambda(b)=\lambda(a_{(1)})\lambda(a_{(2)} b)$, for all $a,b\in A$, where $\Delta(a)(1 \otimes b) = a_{(1)} \otimes a_{(2)} b$;
		
		\vu
		
		\item[(ii)] given $a_1, ..., a_n \in A$, there exists $b\in A$ such that $a_i b =a_i=b a_i$ and $\lambda(a_i)\lambda(b)= \lambda(a_i)$,  for all $1\leq i\leq n$.
	\end{enumerate}
	\label{pro_acaolambda}

Furthermore, the conditions of symmetry of Definition \ref{def_acaoparcialmulti} are equivalent to $\lambda(a) \lambda(b)=\lambda(a_{(1)}b)\lambda(a_{(2)})$, for all $a,b\in A$, where $\Delta(a)(b \otimes 1) = a_{(1)}b \otimes a_{(2)} $.
\end{pro}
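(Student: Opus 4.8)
The guiding observation is that here both structure maps are essentially scalar: with $a\cdot x=\lambda(a)x$ and $\mathfrak{e}(a)=\lambda(a)1_{M(R)}$, every word built from $\cdot$ and $\mathfrak{e}$ applied to elements $x,y,\dots\in R$ factors as a scalar in the values of $\lambda$ times a product of the $R$-entries. Since the product of $R$ is nondegenerate, whenever $R\neq 0$ there exist $x,y\in R$ with $xy\neq 0$, and a scalar annihilating such a product must vanish. Thus the plan is to substitute the two formulas into each item of Definition \ref{def_acaoparcialmulti}, cancel the common $R$-factors, and read off an identity in $\Bbbk$; the case $R=0$ being vacuous, I assume $R\neq 0$ throughout.

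Carrying this out on item (i) turns its two sides into $\lambda(a)\lambda(b)\,xy$ and $\lambda(a_{(1)})\lambda(a_{(2)}b)\,xy$, so (i) is equivalent to $\lambda(a)\lambda(b)=\lambda(a_{(1)})\lambda(a_{(2)}b)$, which is condition (i) of the statement. For item (iii), since the action is scalar its dependence on the $x_j$ is trivial; testing against a single nonzero $x\in R$ shows that (iii) holds if and only if for all $a_1,\dots,a_n$ there is $b$ with $a_ib=a_i=ba_i$ and $\lambda(a_i)\lambda(b)=\lambda(a_i)$, i.e. condition (ii). The inclusions $\mathfrak{e}(A)R\subseteq A\cdot R$ (and, later, $R\mathfrak{e}(A)\subseteq A\cdot R$) are automatic, as both sides equal $\operatorname{span}_{\Bbbk}\{\lambda(a)x\}$; and item (iv) amounts to $\lambda\neq 0$, automatic in the forward direction and the only place in the converse where one uses that $\lambda$ is not identically zero. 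This already proves the forward implication: a partial action satisfies items (i) and (iii), so its $\lambda$ satisfies conditions (i) and (ii).

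For the converse it remains to recover item (ii) from conditions (i)--(ii); its equational part, after cancelling $x$, is
\[
\lambda(a)\lambda(b)=\lambda(a_{(1)})\lambda(S(a_{(2)})b).
\]
I would obtain this from condition (i) and the antipode/counit axioms of Section 2. Applying condition (i) to the pair $(a_{(1)},\,S(a_{(2)})b)$ and relabelling by coassociativity yields
\[
\lambda(a_{(1)})\lambda(S(a_{(2)})b)=\lambda(a_{(1)})\lambda\big(a_{(2)}S(a_{(3)})\,b\big);
\]
the antipode identity $\sum a_{(1)}S(a_{(2)})=\varepsilon(a)1_{M(A)}$ collapses the inner factor to $\varepsilon(a_{(2)})$, and $(\imath\otimes\varepsilon)\Delta=\imath$ then gives $\lambda(a)\lambda(b)$. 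The reverse passage, producing condition (i) back from this identity, runs the same computation with $S^{-1}$, which exists because $A$ is regular.

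The symmetric axioms are treated identically. Item (v) becomes $\lambda(a)\lambda(b)\,xy=\lambda(a_{(1)}b)\lambda(a_{(2)})\,xy$, equivalent to $\lambda(a)\lambda(b)=\lambda(a_{(1)}b)\lambda(a_{(2)})$, the asserted symmetry condition. Item (vi), after cancellation, reads $\lambda(a)\lambda(b)=\lambda(a_{(2)})\lambda(S^{-1}(a_{(1)})b)$ and follows from (v) by the mirror argument: apply the symmetry identity to $(a_{(2)},\,S^{-1}(a_{(1)})b)$, relabel, and collapse via the co-opposite antipode identity $\sum a_{(2)}S^{-1}(a_{(1)})=\varepsilon(a)1_{M(A)}$ (valid by regularity); item (vii) is automatic. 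The genuine difficulty is not the algebra of $\lambda$ but the covering bookkeeping: each expression $S(a_{(2)})b$, $a_{(2)}S(a_{(3)})b$, $S^{-1}(a_{(1)})b$ involves legs of iterated coproducts that must be verified to be covered, so that the antipode and counit are being evaluated on honest elements of $A$ rather than merely formally; this is where I would take the most care, adhering to the Sweedler conventions fixed in Section 2.
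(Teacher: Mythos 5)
Your proof is correct and supplies exactly the verification that the paper leaves implicit---its entire proof of this proposition is ``It is immediate''---with the substitution-and-cancellation reduction, the recovery of the $\e$-axiom of Definition \ref{def_acaoparcialmulti}(ii) from condition (i) via the antipode identity, and the mirror argument for the symmetric axioms all being the intended mechanism. Your remark that the converse additionally requires $\lambda\neq 0$ (to secure nondegeneracy, item (iv) of Definition \ref{def_acaoparcialmulti}) is a genuine point the statement glosses over, consistent with the explicit hypothesis $m\neq 0$ in the coaction analogue, Proposition \ref{coalambda}.
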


\begin{proof}
It is immediate.
\end{proof}

\begin{exa}\label{chato}
	Let $R$ be an algebra with a nondegenerate product, $A_G$ the algebra of the linear functions from $G$ to $\Bbbk$ with finite support, and $N$ a finite subgroup of $G$ such that the characteristic of $\Bbbk$ does not divide the order of $N$. Notice that $\{\delta_p\}_{p \in G}$ given by $\delta_p(g)=\delta_{p,g}$ (the Kroneker symbol), for all $g\in G$, is a basis for $A_G$. Define the linear map
	\begin{eqnarray*}
		\lambda: A_G & \longrightarrow & \Bbbk\\
		\delta_g & \longmapsto & \left\{
		\begin{array}{rl}
			\frac{1}{|N|} & \text{if}\quad g\in N\\
			0 & \text{otherwise. }
		\end{array} \right.
	\end{eqnarray*}
	Thus, $R$ is a symmetric partial $A_G$-module algebra with the action given by $\delta_g\cdot x = \lambda(\delta_g)x$ and $\mathfrak{e}(\delta_g)=\lambda(\delta_g)1_{M(R)}$, for all $\delta_g\in A_G$ and $x\in R$.
\end{exa}

For the next example, consider the dual algebra $(\widehat{A}, \widehat{\Delta})$ as defined in (\ref{achapeu}).\\

\begin{exa}\label{exemplochapeu}
	Let $A_G$ and  $R$ be the algebras of the Example \ref{chato} and  $f\in M(A_G)$  defined by
	\begin{eqnarray*}
		f: G & \longrightarrow & \Bbbk\\
		g & \longmapsto & \left\{
		\begin{array}{rl}
			1 & \text{if} \quad g\in N\\
			0 & \text{otherwise},
		\end{array} \right.
	\end{eqnarray*}
	where $N$ is a given subgroup of $G$. Notice that, $f$ is an idempotent element of $M(A_G)$ and $(f\otimes 1)\Delta(f)=f\otimes f$.  Then $R$ is a partial $\widehat{A_G}$-module algebra via the partial action defined by $\varphi(\underline{\hspace{0.3cm}} h) \cdot x = x \varphi(fh)$ and $\mathfrak{e}(\varphi(\underline{\hspace{0.3cm}} h))=\varphi(fh)1_{M(R)}$, for all $x\in R$ and $\varphi(\underline{\hspace{0.3cm}} h)\in \widehat{A_G}$.
\end{exa}

The next proposition extends the notion of induced partial action, presented in \cite{MunizandBatista}, for the context of multiplier Hopf algebras.

 \begin{pro}[Induced Partial Actions]
	Assume that $R$ is an $A$-module algebra via a global action $\triangleright$ and let $L\subset R$ be  a right and unital ideal of $R$ with identity element $1_L$. Then $L$ is a partial $A$-module algebra via $a \cdot x = 1_L(a \triangleright x)$ and $\e(a)=a\cdot 1_L$ for all $a\in A$ and $x\in L$.
	\label{pro_induzida}
\end{pro}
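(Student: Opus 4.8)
The plan is to verify directly that the triple $(L,\cdot,\e)$ satisfies the four axioms (i)--(iv) of Definition \ref{def_acaoparcialmulti}; note that symmetry is \emph{not} asserted, in agreement with the remark after Proposition \ref{pro_coainduzido} that symmetry should require $L$ to be two-sided. Before touching the axioms I would record the structural facts used throughout. First, well-definedness: since $L$ is a right ideal, $a\cdot x=1_L(a\triangleright x)\in LR\subseteq L$, and $\e(a)=1_L(a\triangleright 1_L)\in L$; as $L$ is unital we identify $M(L)\cong L$, so $\e$ indeed maps into $M(L)$. Second, the two ``$1_L$-handling'' identities: for $w\in L$ and $r\in R$ one has $w1_L=w$, hence $wr=w(1_Lr)$ (absorption of $1_L$ \emph{on the right of an element of $L$}, which is all the right-ideal hypothesis grants); and, for $x\in L$, Definition \ref{defmodalgebra}(iii) together with $x1_L=x$ gives the merging rule $(c_{(1)}\triangleright x)(c_{(2)}\triangleright 1_L)=c\triangleright(x1_L)=c\triangleright x$.

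For axiom (i) I would expand $a\cdot\big(x(b\cdot y)\big)=1_L\big(a\triangleright(x(b\cdot y))\big)$ using Definition \ref{defmodalgebra}(iii),(i) and coassociativity, obtaining $1_L\sum(a_{(1)}\triangleright x)(a_{(2)}\triangleright 1_L)(a_{(3)}b\triangleright y)$; the merging rule collapses the first two factors to $a_{(1)}\triangleright x$, leaving $1_L\sum(a_{(1)}\triangleright x)(a_{(2)}b\triangleright y)$. On the other side, $(a_{(1)}\cdot x)(a_{(2)}b\cdot y)=\sum\big[1_L(a_{(1)}\triangleright x)\big]\big[1_L(a_{(2)}b\triangleright y)\big]$, and the absorption identity applied to $w=1_L(a_{(1)}\triangleright x)\in L$ inserts exactly the missing inner $1_L$, matching the two expressions term by term.

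For the $\e$-identity in axiom (ii) I would compute $a_{(1)}\cdot\big(S(a_{(2)})b\cdot x\big)$ by expanding with Definition \ref{defmodalgebra}(iii),(i) and coassociativity into $1_L\sum(a_{(1)}\triangleright 1_L)\big(a_{(2)}S(a_{(3)})b\triangleright x\big)$, then apply the antipode collapse $\sum a_{(1)}\otimes a_{(2)}S(a_{(3)})=a\otimes 1$ (the covering element $b$ keeping every Sweedler leg legitimate) to reach $1_L(a\triangleright 1_L)(b\triangleright x)$; the same expression comes out of $\e(a)(b\cdot x)=\big[1_L(a\triangleright 1_L)\big]\big[1_L(b\triangleright x)\big]$ after one absorption step. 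The covering clause $\e(A)L\subseteq A\cdot L$ I would get from the equality $A\cdot L=L$: for $x\in L$, Remark \ref{obs_unidacao} yields $e\in A$ with $e\triangleright x=x$, whence $e\cdot x=1_L(e\triangleright x)=1_Lx=x\in A\cdot L$, so $L\subseteq A\cdot L\subseteq L$ and $\e(A)L\subseteq L=A\cdot L$.

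Axioms (iii) and (iv) are then quick. For (iii), given $a_1,\dots,a_n$ and $x_1,\dots,x_m$, take the module local unit $e$ of Remark \ref{obs_unidacao} with $ea_i=a_i=a_ie$ and $e\triangleright x_j=x_j$; since $x_j\in L$, $e\cdot x_j=1_L(e\triangleright x_j)=1_Lx_j=x_j$, so $a_i\cdot(e\cdot x_j)=a_i\cdot x_j$, and $b=e$ works. Axiom (iv) follows from the same computation $e\cdot x=x$, forcing $x=0$ whenever $A\cdot x=0$. I expect the only genuinely delicate point to be the bookkeeping of \emph{covering} in the multiplier-Hopf setting --- keeping a covering element present so that the three-fold Sweedler expressions, the factor $a\triangleright 1_L$, and the antipode collapse in (ii) are all legitimate --- together with the discipline of only ever absorbing $1_L$ on the right of an element already known to lie in $L$, since $L$ is merely a right ideal.
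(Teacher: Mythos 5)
Your proposal is correct and follows essentially the same route as the paper's proof: a direct verification of axioms (i)--(iv) of Definition \ref{def_acaoparcialmulti} using $x1_L=x$ for $x\in L$, the absorption $1_LR\subseteq L$, and the module local units of Remark \ref{obs_unidacao} for (iii) and (iv). The only differences are cosmetic --- you expand (i) through a three-fold Sweedler expression and a merging rule where the paper simply cancels the inner $1_L$ first, and you spell out the computation for the identity $\e(a)(b\cdot x)=a_{(1)}\cdot(S(a_{(2)})b\cdot x)$ and the inclusion $\e(A)L\subseteq A\cdot L$, which the paper dismisses with ``it follows from the definition of $\e$.''
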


\begin{proof}
	
	Indeed,
	\begin{itemize}
		\item [(i)]	for $a,b\in A$ and $x,y\in L$,
		\begin{eqnarray*}
			a\cdot(x(b\cdot y))&=&1_L(a\triangleright(x1_L(b\triangleright y)))\\
			&=&1_L(a\triangleright(x(b\triangleright y)))\\
			&=&1_L(a_{(1)}\triangleright x)(a_{(2)}b\triangleright y)\\
			&=& 1_L(a_{(1)}\triangleright x)1_L(a_{(2)}b\triangleright y)\\
			&=& (a_{(1)}\cdot x)(a_{(2)}b\cdot y).
		\end{eqnarray*}
		
		\item [(ii)] It follows from the definition of $\e: A\longrightarrow L$.
		
		\vu
		
		\item [(iii)] Given $a_1, ..., a_n\in A$ and $x_1, ..., x_m\in L$, by Remark \ref{obs_unidacao} there is an element $b\in A$ such that $ba_i=a_i=a_ib$ and $b\triangleright x_j=x_j$, thus
		\begin{eqnarray*}
			a_i\cdot b\cdot x_j&=& 1_L(a_i\triangleright (1_L(b\triangleright x_j))\\
			&=& 1_L(a_i\triangleright x_j)\\
			&=& a_i\cdot x_j,
		\end{eqnarray*}
		for all $1\leq i\leq n$ and $1\leq j\leq m$.
		
		\vu
		
		\item [(iv)] If $x\in L$ is such that $a\cdot x=0$, for all $a\in A$, then again by Remark \ref{obs_unidacao} there exists $b\in A$ such that $b\triangleright x=x$ and hence $0=b\cdot x=1_L(b\triangleright x)=1_Lx=x$.
	\end{itemize}
	
\end{proof}

Observe that, if the algebra $L$ is a bilateral ideal of $R$, then its unit $1_L$ is a central idempotent element in $R$. In this case, the induced partial action is symmetric.
 
The following example illustrates the previous proposition.	

\begin{exa} Let $A_G$ be the algebra given by Example \ref{chato} and $R$ the group algebra $\Bbbk G$. Suppose that $R$ is the $A_G$-module algebra via the action $\delta_p\triangleright h=\delta_p(h)h$, for all $p,h\in G$. Consider a finite and normal subgroup $N\neq \{1_G\}$ of $G$, with order $|N|$  not divisible by the characteristic of $\Bbbk$, and $f_N=\frac{1}{|N|}\sum\limits_{n\in N}n$ a central idempotent in $R$. Thus, the algebra $S=f_NR$ is a symmetric partial $A_G$-module algebra given by  
\begin{eqnarray*}
\delta_p\cdot (f_Nh)&=&f_N(\delta_p\triangleright(f_Nh))\\
&=& \left\{
			\begin{array}{rl}
			\frac{1}{|N|} f_Np & \text{if}\quad ph^{-1}\in N\\
				0 \ \ \ \  & \text{otherwise}.
							\end{array} \right.
\end{eqnarray*}
and $\mathfrak{e}(\delta_p)=\delta_p\cdot f_N=\frac{1}{|N|}f_N$. Notice that taking $h=1_G$ and $1_G\neq p\in N$, then 
\begin{center}
$\varepsilon(\delta_p)f_N=\delta_{p,1_G}f_N=0$,
\end{center}
i.e., $\mathfrak{e}(\delta_p)\neq \varepsilon(\delta_p)f_N$. Hence, the induced partial action is not global.
\end{exa}

\subsection{Extensions of a Partial Action}

In this section,  given $(R, \cdot, \e)$ a symmetric partial action of $A$, our purpose is to construct a linear map $\cdot: A \otimes M(R) \rightarrow M(A \cdot R)$. This new linear map will not characterize, however, a structure of partial module algebra on $M(R)$, but it helps, for example, to show that $\e(a)|_{A\cdot R}= a\cdot 1_{M(R)}$. The next result is crucial to define this linear map. 

 \begin{lem}
 	Let $(R, \cdot, \e)$ be a symmetric partial $A$-module algebra. Then
 	\begin{enumerate}
 		\item[(i)] $(a \cdot x)(b \cdot y) = a_{(1)} \cdot (x(S(a_{(2)})b \cdot y))$;
 		
 		\vu
 		
 		\item[(ii)] $(a \cdot x)(b \cdot y) = b_{(2)} \cdot ((S^{-1}(b_{(1)})a \cdot x)y)$,
 	\end{enumerate}
 	for all $a,b\in A$ and $x,y\in R$.
 \end{lem}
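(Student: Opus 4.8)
The plan is to establish both identities by starting from the right-hand side and collapsing it to the left-hand side using a single defining axiom of a symmetric partial module algebra, followed by coassociativity and an antipode relation. The two parts are mirror images of one another: part (i) is driven by axiom (i) of Definition~\ref{def_acaoparcialmulti} together with $S$, whereas part (ii) is driven by the symmetry axiom (v) together with $S^{-1}$.

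For part (i), I would read the right-hand side $a_{(1)}\cdot(x(S(a_{(2)})b\cdot y))$ as an instance of $a'\cdot(x(b'\cdot y))$ with outer acting element $a'=a_{(1)}$ and inner element $b'=S(a_{(2)})b$, and apply axiom (i). This produces $(a_{(1)(1)}\cdot x)(a_{(1)(2)}S(a_{(2)})b\cdot y)$; relabelling the three legs of $a$ by coassociativity as $a_{(1)}\otimes a_{(2)}\otimes a_{(3)}$ turns this into $(a_{(1)}\cdot x)(a_{(2)}S(a_{(3)})b\cdot y)$. The antipode relation $a_{(1)}\otimes a_{(2)}S(a_{(3)})=a\otimes 1$ then collapses the middle factor and leaves exactly $(a\cdot x)(b\cdot y)$. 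Part (ii) is entirely analogous: writing the right-hand side as $b_{(2)}\cdot((S^{-1}(b_{(1)})a\cdot x)y)$ and applying axiom (v) gives $(b_{(2)(1)}S^{-1}(b_{(1)})a\cdot x)(b_{(2)(2)}\cdot y)$, and after relabelling the legs of $b$ the relation $b_{(2)}S^{-1}(b_{(1)})\otimes b_{(3)}=1\otimes b$ reduces this to $(a\cdot x)(b\cdot y)$.

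I expect the genuine work to lie not in this formal algebra but in the bookkeeping of coverings forced by the multiplier setting, since the entangled expressions $S(a_{(2)})b$ and $S^{-1}(b_{(1)})a$ are a priori multipliers rather than elements of $A$. To make the manipulations rigorous I would first use regularity to record that $a_{(1)}\otimes S(a_{(2)})b=(\imath\otimes S)((1\otimes S^{-1}(b))\Delta(a))$ and $S^{-1}(b_{(1)})a\otimes b_{(2)}=(S^{-1}\otimes\imath)((S(a)\otimes 1)\Delta(b))$ both genuinely belong to $A\otimes A$. Writing such an element as a finite sum $\sum_i p_i\otimes q_i$, I can then apply the relevant axiom termwise to $\sum_i p_i\cdot(x(q_i\cdot y))$ for part (i), and to $\sum_i q_i\cdot((p_i\cdot x)y)$ for part (ii). Applying $\Delta\otimes\imath$ (respectively $\imath\otimes\Delta$) to the identity defining $\sum_i p_i\otimes q_i$ recovers the three-legged expression on which the antipode relation acts, giving $\sum_i p_{i(1)}\otimes p_{i(2)}q_i=a\otimes b$ (respectively $\sum_i q_{i(1)}p_i\otimes q_{i(2)}=a\otimes b$). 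This reduces every step to operations on honest elements of $A$ and $A\otimes A$ and legitimizes the Sweedler computations above.
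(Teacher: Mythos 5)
Your argument is correct and is exactly the computation the paper has in mind: the paper's own proof is just the one-line remark ``It is immediate by Definition \ref{def_acaoparcialmulti}'', and what you write out --- applying axiom (i) (resp.\ the symmetry axiom (v)) to the right-hand side, reassociating the legs of $a$ (resp.\ $b$), and collapsing $a_{(2)}S(a_{(3)})$ (resp.\ $b_{(2)}S^{-1}(b_{(1)})$) via the antipode axioms --- is the intended verification, with the covering bookkeeping via $a_{(1)}\otimes S(a_{(2)})b\in A\otimes A$ supplying the rigor the paper leaves implicit. No gaps.
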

\begin{proof}
It is immediate by Definition \ref{def_acaoparcialmulti}.
\end{proof}

For the rest of this section we will assume that the product in $A \cdot R$ is nondegenerate, which, in the global case, follows directly from the fact that $R$ is unitary. 
Notice, in particular,  that Propositions \ref{pro_induzida} and \ref{pro_acaolambda} provide examples of partial actions such that $A\cdot R$ has a nondegenerate product.

 \begin{lem}\label{lema_extparcial} Under these conditions, consider $m\in M(R),$ $a\in A$ and the linear maps
 \begin{eqnarray*}
 \overline{(a \cdot m)}(b\cdot x) &=& a_{(1)} \cdot (m (S(a_{(2)})b \cdot x))\nonumber \\ 
 \overline{\overline{(a \cdot m)}}(b \cdot x) &=& a_{(2)} \cdot ((S^{-1}(a_{(1)})b\cdot x)m),\label{def_exteparcial}
 \end{eqnarray*}
 for all $b\in A$ and $x\in R$. Then $a \cdot m=(\overline{a\cdot m},\overline{\overline{a\cdot m}})\in M(A\cdot R).$ 
  \end{lem}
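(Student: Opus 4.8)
The plan is to show that the pair $(\overline{a\cdot m}, \overline{\overline{a\cdot m}})$ satisfies the multiplier compatibility relation $p\,\overline{(a\cdot m)}(q) = \overline{\overline{(a\cdot m)}}(p)\,q$ for all $p,q \in A\cdot R$, since this is precisely the condition (established in the preliminaries) characterizing when a pair of linear maps lies in $M(A\cdot R)$. First I would verify that each of the two maps is well defined, i.e.\ that its value depends only on the element $b\cdot x \in A\cdot R$ and not on the particular representative $b\otimes x$; this follows from the symmetric partial action axioms, using that $A\cdot R$ is spanned by elements of the form $b\cdot x$ and invoking Lemma preceding (items (i) and (ii)) to rewrite products. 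I would also check that each map indeed lands in $A\cdot R$: for $\overline{(a\cdot m)}$, writing $m(S(a_{(2)})b\cdot x)$ as an element of $M(R)$ acting on something in $A\cdot R$ and applying the leftmost $a_{(1)}\cdot(-)$, the result is of the form $a_{(1)}\cdot(\text{element of }R)$, hence in $A\cdot R$.

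Next I would establish the compatibility relation. Take two typical elements $p = b\cdot x$ and $q = c\cdot y$ of $A\cdot R$. The left-hand side is
\begin{eqnarray*}
p\,\overline{(a\cdot m)}(q) &=& (b\cdot x)\bigl(a_{(1)}\cdot(m(S(a_{(2)})c\cdot y))\bigr),
\end{eqnarray*}
and I would use item (i) of the preceding Lemma (namely $(a\cdot x)(b\cdot y)=a_{(1)}\cdot(x(S(a_{(2)})b\cdot y))$), reading it with the roles adjusted, to collapse the product $(b\cdot x)(a_{(1)}\cdot(\cdots))$ into a single expression of the form $b_{(1)}\cdot\bigl(x(S(b_{(2)})a_{(1)}\cdot(\cdots))\bigr)$. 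On the right-hand side,
\begin{eqnarray*}
\overline{\overline{(a\cdot m)}}(p)\,q &=& \bigl(a_{(2)}\cdot((S^{-1}(a_{(1)})b\cdot x)m)\bigr)(c\cdot y),
\end{eqnarray*}
and I would similarly apply item (ii) of that Lemma to pull the trailing factor $(c\cdot y)$ inside. The goal is to massage both sides, using coassociativity, the antipode axioms, and the partial action identity (i) of Definition \ref{def_acaoparcialmulti}, until they meet at a common symmetric expression in which $m$ sits between $S^{-1}(a_{(1)})b\cdot x$ on the left and $S(a_{(2)})c\cdot y$ on the right, with an overall $a$-action wrapping everything.

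The main obstacle I anticipate is the bookkeeping of the Sweedler legs of $a$ together with the antipode and its inverse: the two maps $\overline{(a\cdot m)}$ and $\overline{\overline{(a\cdot m)}}$ are built using $S$ and $S^{-1}$ respectively, so matching them requires carefully tracking which leg of $\Delta(a)$ is acted upon and repeatedly using the defining antipode relations $m(S\otimes\imath)(\Delta(a)(1\otimes b))=\varepsilon(a)b$ and its $S^{-1}$-analogue, always remembering that in the multiplier Hopf algebra setting every application of Sweedler notation must be properly covered by an accompanying element of $A$ (as emphasized in the preliminaries). I would handle this by choosing, via item (iii) of Definition \ref{def_acaoparcialmulti}, a single local unit $e\in A$ covering all the relevant elements $a, b, c$ simultaneously before beginning the manipulation, so that all the coproducts appearing genuinely live in $A\otimes A$ and the computations are rigorous. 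Once both sides reduce to the same wrapped expression, the compatibility relation holds, and we conclude $a\cdot m = (\overline{a\cdot m}, \overline{\overline{a\cdot m}}) \in M(A\cdot R)$.
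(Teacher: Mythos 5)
Your proposal is correct and follows essentially the same route as the paper: the paper's proof consists precisely of verifying the compatibility relation $(b\cdot x)\,\overline{(a\cdot m)}(c\cdot y)=\overline{\overline{(a\cdot m)}}(b\cdot x)\,(c\cdot y)$ on generators of $A\cdot R$, passing through exactly the common expression you predict, namely $a_{(2)}\cdot\bigl(((S^{-1}(a_{(1)})b\cdot x)m)(S(a_{(3)})c\cdot y)\bigr)$, using the product identities of the preceding lemma and the partial-action axioms. Your additional remarks on well-definedness and on covering the Sweedler legs are reasonable precautions that the paper leaves implicit (well-definedness follows from the compatibility relation together with the standing assumption that the product in $A\cdot R$ is nondegenerate).
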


\begin{proof}
 It is enough to show the compatibility relation between these linear maps. Given $a,b,c\in A$, $x,y\in R$ and $m \in M(R)$,
\begin{eqnarray*}
	(b\cdot x)(\overline{(a\cdot m)}(c\cdot y))&=& (b\cdot x)(a_{(1)}\cdot(m(S(a_{(2)})c\cdot y)))\\
	&\stackrel{\ref{def_acaoparcialmulti}(iv)}{=}&a_{(2)}\cdot((S^{-1}(a_{(1)})b\cdot x)(m(S(a_{(3)})c\cdot y)))\\
	&{=}& a_{(2)}\cdot(((S^{-1}(a_{(1)})b\cdot x)m)(S(a_{(3)})c\cdot y))\\
	&\stackrel{\ref{def_acaoparcialmulti}(i)}{=}& (a_{(2)}\cdot ((S^{-1}(a_{(1)})b\cdot x)m))(a_{(3)}S(a_{(4)})c\cdot y)\\
	&=&   (a_{(2)}\cdot ((S^{-1}(a_{(1)})b\cdot x)m))(c\cdot y)\\
	&=&(\overline{\overline{(a\cdot m)}}(b\cdot x))(c\cdot y).
\end{eqnarray*}
\end{proof}

Then the linear map $\cdot:A\otimes M(R)\to M(A\cdot R)$, as defined in Lemma \ref{lema_extparcial}, is the mentioned extension.
In the next propositions we will see some properties of this extension.

\begin{pro} Let $(R, \cdot, \e)$ be a symmetric partial $A$-module algebra. Then
\begin{enumerate}
\item[(i)] $a \cdot (m(b \cdot n))= (a_{(1)} \cdot m)(a_{(2)}b \cdot n)$;
\item[(ii)] $a\cdot ((b\cdot m)n)=(a_{(1)}b\cdot m)(a_{(2)}\cdot n)$,
\end{enumerate}
for all $a,b\in A$ and $m,n\in M(R)$.
\label{pro_exteproacao}
\end{pro}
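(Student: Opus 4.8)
The plan is to extend the two identities from Lemma \ref{lema_extparcial} (the extension to $M(R)$) and Proposition \ref{pro_exteproacao} from the level of $R$ to the level of $M(R)$, using the defining formulas for the action on multipliers together with the nondegeneracy of the product on $A\cdot R$. Since every element of $M(A\cdot R)$ is determined by its action on $A\cdot R$, it suffices in each case to evaluate both sides against an element $c\cdot z$ (with $c\in A$, $z\in R$) and check they agree; nondegeneracy then yields the equality in $M(A\cdot R)$.

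For item (i), I would start from the left-hand side $a\cdot(m(b\cdot n))$ and unwind the extension formula $\overline{(a\cdot m')}(c\cdot z)=a_{(1)}\cdot(m'(S(a_{(2)})c\cdot z))$ with $m'=m(b\cdot n)$. The key move is to insert a covering: write the coproduct factors so that the Sweedler indices of $a$ split correctly, then use the compatibility of $\cdot$ on $M(R)$ with products (Definition \ref{def_acaoparcialmulti}(i), now read in its extended form) to peel off the factor $b\cdot n$. Concretely, I expect to manipulate $a_{(1)}\cdot\bigl(m\,(b\,S(a_{(2)})c\cdot n)\bigr)$ and regroup the antipode terms via $a_{(2)}S(a_{(3)})=\varepsilon(a_{(2)})$, so that the $b$ and $c$ recombine and the expression collapses to $(a_{(1)}\cdot m)(a_{(2)}b\cdot n)$ evaluated against $c\cdot z$. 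Item (ii) is handled symmetrically using the second component $\overline{\overline{(a\cdot m')}}$ and Definition \ref{def_acaoparcialmulti}(v)--(vi), exploiting the symmetry hypothesis exactly where (i) used the left-module side.

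The main obstacle, as usual in the multiplier Hopf algebra setting, is bookkeeping the coverings: every appearance of $\Delta$ must be saturated by an honest element of $A$ before Sweedler notation is legitimate, and one must be careful that all the intermediate expressions like $S(a_{(2)})c$ or $S^{-1}(a_{(1)})b$ remain genuinely in $A$ (not merely in $M(A)$) so that the extended action is well defined. I would therefore fix a local unit $e\in A$ covering all the relevant factors at the outset, carry it through the computation, and only remove it at the end; the regularity of $A$ (so that $S^{-1}$ exists and $S(A)=A$) is what guarantees each rewritten factor stays inside $A$. Once the coverings are controlled, the algebraic identities reduce to repeated application of Definition \ref{def_acaoparcialmulti} and the antipode relations, and the conclusion follows by nondegeneracy of the product on $A\cdot R$.
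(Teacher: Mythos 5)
Your proposal follows essentially the same route as the paper: evaluate both sides on a generic element $c\cdot x$ of $A\cdot R$, unwind the multiplier-extension formulas of Lemma \ref{lema_extparcial} twice (once for $m(b\cdot n)$ acting as a multiplier and once for $b\cdot n$ itself), insert $S(a_{(2)})a_{(3)}=\varepsilon(a_{(2)})$ to regroup the Sweedler legs, and conclude by nondegeneracy of the product on $A\cdot R$. Your intermediate expression $a_{(1)}\cdot\bigl(m(bS(a_{(2)})c\cdot n)\bigr)$ is slightly miswritten --- the correct step is $a_{(1)}\cdot\bigl(m(b_{(1)}\cdot(n(S(b_{(2)})S(a_{(2)})c\cdot x)))\bigr)$ --- but this is a notational slip in a sketch, and the strategy coincides with the paper's proof.
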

\begin{proof}
	(i) In view of Lemma \ref{lema_extparcial},
	\begin{eqnarray*}
		(a \cdot (m(b \cdot n)))(c\cdot x)&=& a_{(1)}\cdot (m(b \cdot n)(S(a_{(2)})c\cdot x))\\
		&=&a_{(1)}\cdot (m((b\cdot n)(S(a_{(2)})c\cdot x)))\\
		&=&a_{(1)}\cdot (m(b_{(1)}\cdot(n(S(b_{(2)})S(a_{(2)})c\cdot x))))\\
&=&a_{(1)}\cdot(m(S(a_{(2)})a_{(3)}b_{(1)}\cdot(n(S(b_{(2)})S(a_{(4)})c\cdot x))))\\	
&=& (a_{(1)} \cdot m)(a_{(2)}b_{(1)}\cdot(n(S(a_{(3)}b_{(2)})c\cdot x)))\\
&=&	(a_{(1)} \cdot m)(a_{(2)}b \cdot n)(c\cdot x),
\end{eqnarray*}
for all $c\in A$ and $x\in R$. Hence,  $(a \cdot (m(b \cdot n)))=(a_{(1)} \cdot m)(a_{(2)}b \cdot n)$, for all $a,b\in A$ and $m,n\in M(R)$. Similarly, one show (ii).
\end{proof}

\begin{rem} \label{obs_e_aigual_a_agindoem1}Notice that $\e(a)|_{A\cdot R}= a\cdot 1_{M(R)}$ for all $a\in A.$ Indeed,
\begin{center}
$(a\cdot 1_{M(R)})(b\cdot x)= a_{(1)}\cdot(1_{M(R)}(S(a_{(2)})b\cdot x))= a_{(1)}\cdot(S(a_{(2)})b\cdot x)=\e(a)(b\cdot x)$,
\end{center}
and conversely $(b\cdot x)(a\cdot 1_{M(R)})= (b\cdot x)\e(a)$, for all $b\in A$ and $x\in R$.
\end{rem}

\begin{pro} Let $(R, \cdot, \e)$  be a symmetric partial  $A$-module algebra. Then
\begin{enumerate}
\item[(i)] $A\cdot R=\e(A)R$;
\item[(ii)] $A\cdot R=R\e(A)$.
\end{enumerate}
\label{pro_igualextensaoacao}
\end{pro}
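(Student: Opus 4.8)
The plan is to prove both equalities by a pair of inclusions, in which one inclusion is already built into Definition~\ref{def_acaoparcialmulti} and the reverse one is assembled from Proposition~\ref{pro_exteproacao} and Remark~\ref{obs_e_aigual_a_agindoem1}, which sit immediately above the statement. For (i), the inclusion $\e(A)R\subseteq A\cdot R$ is exactly the second half of condition (ii) of Definition~\ref{def_acaoparcialmulti}, so only $A\cdot R\subseteq\e(A)R$ remains. Starting from a generator $a\cdot x$, I would first apply the covering condition (iii) of Definition~\ref{def_acaoparcialmulti} to the pair $(a,x)$, producing $b\in A$ with $ab=a=ba$ and $a\cdot x=a\cdot(b\cdot x)$; the role of this step is to replace the bare $x$ by the covered element $b\cdot x\in A\cdot R$, which can legitimately be fed into the extended action. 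Proposition~\ref{pro_exteproacao}(i) with $m=1_{M(R)}$ and $n=x$ then gives $a\cdot(b\cdot x)=(a_{(1)}\cdot 1_{M(R)})(a_{(2)}b\cdot x)$, and since $a_{(2)}b\cdot x\in A\cdot R$ I may invoke Remark~\ref{obs_e_aigual_a_agindoem1} to replace $a_{(1)}\cdot 1_{M(R)}$ by $\e(a_{(1)})$, exhibiting $a\cdot x=\e(a_{(1)})(a_{(2)}b\cdot x)\in\e(A)R$.

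Part (ii) is the mirror image. Here the inclusion $R\e(A)\subseteq A\cdot R$ is precisely condition (vii) of Definition~\ref{def_acaoparcialmulti}. For $A\cdot R\subseteq R\e(A)$, with the same covering element $b$ I would instead apply Proposition~\ref{pro_exteproacao}(ii) with $m=x$ and $n=1_{M(R)}$ to obtain $a\cdot(b\cdot x)=(a_{(1)}b\cdot x)(a_{(2)}\cdot 1_{M(R)})$, and then turn $a_{(2)}\cdot 1_{M(R)}$ into $\e(a_{(2)})$ by the right-handed form of Remark~\ref{obs_e_aigual_a_agindoem1} (valid because $a_{(1)}b\cdot x\in A\cdot R$), landing $a\cdot x$ in $R\e(A)$. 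Both directions thus stay within the symmetric hypothesis, as required by Proposition~\ref{pro_exteproacao}.

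The one place demanding care is the covering bookkeeping intrinsic to the multiplier setting: the expressions $\e(a_{(1)})(a_{(2)}b\cdot x)$ and $(a_{(1)}b\cdot x)\e(a_{(2)})$ must be read as genuine finite sums rather than formal Sweedler expressions. This is exactly what is secured by $\Delta(a)(1\otimes b)\in A\otimes A$ for the first and $\Delta(a)(b\otimes 1)\in A\otimes A$ for the second, the latter being available from the regularity of $A$; each display then unwinds to a finite sum $\sum_i\e(p_i)(q_i\cdot x)$, respectively $\sum_i(p_i\cdot x)\e(q_i)$, with all $p_i,q_i\in A$. I expect this coverage check to be the only genuine obstacle, the remainder being a direct combination of the two results proved just above.
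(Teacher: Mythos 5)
Your proof is correct and follows essentially the same route as the paper: the paper's own argument is exactly your chain $a\cdot x = a\cdot(b\cdot x) = (a_{(1)}\cdot 1_{M(R)})(a_{(2)}b\cdot x) = \e(a_{(1)})(a_{(2)}b\cdot x)$ via the covering element from Definition~\ref{def_acaoparcialmulti}(iii), Proposition~\ref{pro_exteproacao}(i) and Remark~\ref{obs_e_aigual_a_agindoem1}. You merely spell out what the paper leaves implicit, namely the reverse inclusions coming from conditions (ii) and (vii) of the definition and the mirror argument for part (ii).
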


\begin{proof}
Given $a\in A$ and $x\in R$, it follows from Definition \ref{def_acaoparcialmulti} that,  there is $b\in A$ such that $ab=a=ba$ and $a\cdot x=a\cdot b\cdot x$. Hence,
	\begin{eqnarray*}
		a\cdot x&=& a\cdot (b\cdot x)\\
		&\stackrel{\ref{pro_exteproacao}(i)}{=}&(a_{(1)}\cdot 1_{M(R)})(a_{(2)}b\cdot x)\\
		&\stackrel{\ref{obs_e_aigual_a_agindoem1}}{=}&\e(a_{(1)})(a_{(2)}b\cdot x)\in \e(A)R.
	\end{eqnarray*}
\end{proof}

\subsection{Duality between partial actions and partial coactions}

  In this section, we will establish a duality between partial actions and partial coactions for a regular multiplier Hopf algebra $(A, \Delta)$ with a left integral.

\begin{pro}\label{pro_dualocoacao} Let $(R,\rho,E)$ be a symmetric right partial $A$-comodule algebra. Then  $R$ is a  symmetric left partial $\widehat{A}$-module algebra given by
	\begin{eqnarray*}
		\cdot : \widehat{A}\otimes R &\longrightarrow & R\\
		\varphi(\underline{\hspace{0.3cm}} a)\otimes x&\longmapsto & \varphi(\underline{\hspace{0.3cm}} a)\cdot x := (\imath\otimes\varphi)(\rho(x)(1\otimes a))
	\end{eqnarray*}
and $\e : \widehat{A} \longrightarrow M(R)$ such that
\begin{eqnarray*}
	\e(\varphi(\underline{\hspace{0.3cm}} a))x &=& (\imath\otimes\varphi)(E(x\otimes a)), \\
	x\e(\varphi(b \underline{\hspace{0.3cm}} )) &=& (\imath\otimes\varphi)((x\otimes b)E).
\end{eqnarray*}
	\end{pro}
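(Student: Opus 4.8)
The plan is to verify that the prescribed data $(\,\cdot\,,\e)$ satisfies all seven axioms of Definition \ref{def_acaoparcialmulti}, translating each comodule-algebra identity for $\rho$ and $E$ into the corresponding module-algebra identity under the pairing $\varphi(\underline{\hspace{0.3cm}}a)\cdot x=(\imath\otimes\varphi)(\rho(x)(1\otimes a))$. The organizing principle is that $\widehat{A}$-products unfold via the coproduct $\widehat{\Delta}$ on the dual side, which by \eqref{escritadelta1} and \eqref{escritadelta2} converts back into the coproduct $\Delta$ on $A$ paired against iterated coactions $(\rho\otimes\imath)\rho$. So each axiom will be proved by expanding the left-hand side into an expression of the form $(\imath\otimes\varphi\otimes\psi)(\cdots)$, applying co-associativity in the form \eqref{escritacoa1}--\eqref{escritacoa4}, and recognizing the result as the right-hand side. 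Throughout one must keep the second legs of $\rho$ covered, introducing auxiliary local units as needed so that every $\varphi(\underline{\hspace{0.3cm}}\,\cdot\,)$ is genuinely an element of $\widehat{A}$.

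First I would establish the two ``covering'' computations that make $\e$ well-defined as a multiplier: that the prescribed $\overline{\e(\varphi(\underline{\hspace{0.3cm}}a))}$ and $\overline{\overline{\e(\varphi(b\,\underline{\hspace{0.3cm}}))}}$ satisfy the compatibility relation $y\,(\e(\widehat a)\,x)=(y\,\e(\widehat a))\,x$, using condition (i) of Definition \ref{def_comoalgparcarcial_multip} to guarantee $E(x\otimes a),\ (x\otimes a)E\in M(R)\otimes A$ and hence that the slices $(\imath\otimes\varphi)(E(x\otimes a))$ land in $M(R)$; here Lemma \ref{lema_rhoigroe} ($E\rho(x)=\rho(x)=\rho(x)E$) is what links the two one-sided formulas. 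Next I would check axiom (i), $\widehat a\cdot(x(\widehat b\cdot y))=(\widehat a_{(1)}\cdot x)(\widehat a_{(2)}\widehat b\cdot y)$, by writing both sides with two integrals and reducing via \eqref{escritacoa1} together with the description \eqref{deltadual1} of $\widehat{\Delta}$; this is the computation that encodes multiplicativity of $\rho$ and co-associativity simultaneously. Axiom (ii), $\e(\widehat a)(\widehat b\cdot x)=\widehat a_{(1)}\cdot(\widehat S(\widehat a_{(2)})\widehat b\cdot x)$, is where the auxiliary identity \eqref{escritadelta1} (or \eqref{escritadelta2}) enters decisively: it is exactly the tool that rewrites $\widehat\Delta(\varphi(\underline{\hspace{0.3cm}}a))$ in closed form so that the antipode $\widehat S$ on $\widehat A$ can be paired against $E$ and collapsed to the $(\imath\otimes\varphi)(E(x\otimes\cdot))$ defining $\e$.

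For axioms (iii) and (iv) I would use the existence of bilateral local units in $A$: given finitely many $\varphi(\underline{\hspace{0.3cm}}a_i)$ and $x_j$, choose $e\in A$ with $e a_i=a_i=a_i e$ covering all the relevant elements; the dual $\widehat e$ (built from the same integral) then serves as the covering multiplier on the $\widehat A$-side, and nondegeneracy of $\varphi$ against $A$ upgrades this to axiom (iv), $\widehat A\cdot x=0\Rightarrow x=0$ (a faithful-integral argument: $(\imath\otimes\varphi)(\rho(x)(1\otimes a))=0$ for all $a$ forces $\rho(x)=0$, whence $x=0$ by injectivity of $\rho$ and Proposition \ref{pro_coavarepsi}). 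The symmetric axioms (v)--(vii) are proved by the mirror-image computations, now invoking the symmetric relations \eqref{escritacoa3}--\eqref{escritacoa4}, the companion identity $(1\otimes A)\rho(R)=(R\otimes A)E$, and the right-handed formula for $\e$. I expect the main obstacle to be axiom (ii): unlike the Hopf case where one simply writes $a\cdot 1_R$, here $\e$ is defined through $E$ rather than through $\rho$, so the heart of the proof is showing that contracting $\widehat S(\widehat a_{(2)})$ against $\rho$ reproduces precisely the $E$-slice, which is exactly the content packaged into the nonstandard dual-coproduct identities \eqref{escritadelta1}--\eqref{escritadelta2} and which must be applied with careful attention to coverage of the Sweedler legs.
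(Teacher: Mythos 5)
Your overall strategy coincides with the paper's: every axiom of Definition \ref{def_acaoparcialmulti} is checked by expanding both sides into expressions of the form $(\imath\otimes\varphi\otimes\varphi)(\cdots)$, converting products in $\widehat{A}$ into the coproduct of $A$ via \eqref{escritadelta1}--\eqref{escritadelta2}, and collapsing iterated coactions with \eqref{escritacoa1}--\eqref{escritacoa4} and Lemma \ref{lema_rhoigroe}. Your treatment of axioms (i), (ii) and (iv) matches the paper's computations (in (iv) the paper, like you, uses elements $\varphi(a\,\underline{\hspace{0.3cm}}\,b)$, faithfulness of $\varphi$, and injectivity of $\rho$).

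There is, however, a step in your plan for axiom (iii) that fails as stated. You propose to take a bilateral local unit $e\in A$ for the elements $a_i$ and to let ``the dual $\widehat{e}$'' serve as the covering multiplier on the $\widehat{A}$-side. But the product of $\widehat{A}$ is the convolution product $(wu)(a)=(w\otimes u)\Delta(a)$, so $\varphi(\underline{\hspace{0.3cm}}\,e)\varphi(\underline{\hspace{0.3cm}}\,a_i)$ is in general not $\varphi(\underline{\hspace{0.3cm}}\,a_i)$: being a local unit in $A$ for $a_i$ does not transport to being a local unit in $\widehat{A}$ for $\widehat{a_i}$. The paper instead invokes the existence of local units in $\widehat{A}$ itself (it is again a regular multiplier Hopf algebra with integrals), and --- this is the delicate point your outline misses --- the local unit $\varphi(\underline{\hspace{0.3cm}}\,b)$ must be chosen to cover not only $\widehat{a_1},\dots,\widehat{a_n}$ but also the finitely many elements $\varphi(d_{ij}\,\underline{\hspace{0.3cm}})$ arising from the expansions $(1\otimes c_i)E=\sum_j m_{ij}\otimes d_{ij}$ (where $\varphi(c_i\,\underline{\hspace{0.3cm}})=\varphi(\underline{\hspace{0.3cm}}\,a_i)$); in addition an auxiliary local unit $f\in A$ is needed to cover the $a_i$ and the legs of $a_i\otimes b=\sum_l\Delta(e_{il})(e'_{il}\otimes 1)$. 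Only with both coverings do the two sides of $\widehat{a_i}\cdot(\widehat{b}\cdot x)=\widehat{a_i}\cdot x$ reduce to the same expression $(\imath\otimes\varphi\otimes\varphi)((1\otimes c_i\otimes 1)(E\otimes 1)(x^{(0)}\otimes\Delta(x^{(1)}f)(1\otimes b)))$. The remainder of your outline, including the symmetric axioms via the mirror identities, is sound.
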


\begin{proof} Notice that, using the sigma notation, $$\varphi(\underline{\hspace{0.3cm}} a)\cdot x= (\imath\otimes\varphi)(\rho(x)(1\otimes a))=x^{(0)}\varphi(x^{(1)}a),$$ for all $x\in R$ and $a\in A$, where $x^{(0)}\otimes x^{(1)}a\in A\otimes A$. Thus,
	
	\vu
	
	(i) for  $w=\varphi(\underline{\hspace{0.3cm}} a), u=\varphi(\underline{\hspace{0.3cm}} b)\in\widehat{A}$ and $x,y\in R$,
	\begin{eqnarray*}
		w\cdot(x(u\cdot y))	&=& (\imath\otimes\varphi\otimes\varphi)((xy^{(0)})^{(0)}\otimes (xy^{(0)})^{(1)}a\otimes y^{(1)}b)\\
		&=&  (\imath\otimes\varphi\otimes\varphi)((\rho(x)\otimes 1)(\rho\otimes\imath)(\rho(y)(1\otimes b))(1\otimes a\otimes 1))\\
		&\stackrel{(\ref{escritacoa1})}{=}& (\imath\otimes\varphi\otimes\varphi)((\rho(x)\otimes 1)(E\otimes 1)(\imath\otimes\Delta)(\rho(y))(1\otimes a\otimes b))\\
		&\stackrel{\ref{rhoigual_rhoe}}{=}&(\imath\otimes\varphi\otimes\varphi)((\rho(x)\otimes 1)(\imath\otimes\Delta)(\rho(y))(1\otimes a\otimes b)).
	\end{eqnarray*}
		On the other hand, 
		\begin{eqnarray*}
		(w_{(1)}\cdot x)(w_{(2)}u\cdot y)&\stackrel{(\ref{escritadelta1})}{=}& (\varphi(\underline{\hspace{0.3cm}} \ S^{-1}(b_{(1)})a)\cdot x)(\varphi(\underline{\hspace{0.3cm}} \ b_{(2)})\cdot y)\\
		&=&x^{(0)}y^{(0)} \varphi(x^{(1)} S^{-1}(b_{(1)})a) \varphi(y^{(1)} b_{(2)})\\
		&=& (\imath\otimes\varphi)(\rho(x)(y^{(0)}\otimes \varphi(y^{(1)} b_{(2)})S^{-1}(b_{(1)})a))\\
		&=&(\imath\otimes\varphi)(\rho(x)(y^{(0)}\otimes (y^{(1)} b_{(2)})_{(1)}S^{-1}(b_{(1)})a))  \varphi((y^{(1)} b_{(2)})_{(2)})\\
		&=& (\imath\otimes\varphi\otimes\varphi)((\rho(x)\otimes 1)(y^{(0)}\otimes \Delta(y^{(1)}b_{(2)})( S^{-1}(b_{(1)})a\otimes 1 )))\\
		&=&  (\imath\otimes\varphi\otimes\varphi)((\rho(x)\otimes 1)(\imath\otimes \Delta)(\rho(y)(1\otimes b_{(2})))
(1\otimes S^{-1}(b_{(1)})a \otimes 1))\\
		&=& (\imath\otimes\varphi\otimes\varphi)((\rho(x)\otimes 1)(\imath\otimes \Delta)(\rho(y))
(1\otimes  \Delta(b_{(2)})( S^{-1}(b_{(1)})a \otimes 1)))\\
		&=& (\imath\otimes\varphi\otimes\varphi)((\rho(x)\otimes 1)(\imath\otimes\Delta)(\rho(y))(1\otimes a\otimes b)).		
	\end{eqnarray*}
Therefore, $w\cdot(x(u\cdot y))=(w_{(1)}\cdot x)(w_{(2)}u\cdot y)$. 
	
	\vd
	
	(ii) Consider $w=\varphi(\underline{\hspace{0.3cm}} a)$ and $u=\varphi(\underline{\hspace{0.3cm}} b)$ . Hence,
	 \begin{eqnarray*}
		\e(w)(u\cdot x)&=&(\imath\otimes\varphi\otimes\varphi)((E\otimes 1)(\imath\otimes\Delta)(\rho(x))(1\otimes\Delta( b)(a\otimes 1)))\\
		&\stackrel{\ref{def_comoalgparcarcial_multip}}{=}&(\imath\otimes\varphi\otimes\varphi)((\rho\otimes\imath)(\rho(x))(1\otimes\Delta( b)(a\otimes 1)))\\
		&=&  \varphi(\underline{\hspace{0.3cm}} b_{(1)}a)\cdot (\varphi(\underline{\hspace{0.3cm}} b_{(2)})\cdot x)\\
		&\overset{(\ref{escritadelta2})}{=}& w_{(1)}\cdot(\widehat{S}(w_{(2)})u\cdot x).
	\end{eqnarray*}
	
	\vd
	
	(iii) Let $\varphi(\underline{\hspace{0.3cm}} a_1), ..., \varphi(\underline{\hspace{0.3cm}} a_n)\in\widehat{A}$ and $\varphi(c_i \underline{\hspace{0.3cm}})=\varphi(\underline{\hspace{0.3cm}} a_i)$, for all $1\leq i\leq n$. By assumption, $(1\otimes c_i)E=\sum\limits_{j=1}^k m_{ij}\otimes d_{ij}\in M(R)\otimes A$, then we take $\varphi(\underline{\hspace{0.3cm}} b)\in \widehat{A}$ such that $\varphi(\underline{\hspace{0.3cm}} b)\varphi(\underline{\hspace{0.3cm}} a_i)=\varphi(\underline{\hspace{0.3cm}} a_i)=\varphi(\underline{\hspace{0.3cm}} a_i)\varphi(\underline{\hspace{0.3cm}} b)$ and $\varphi(\underline{\hspace{0.3cm}} b)\varphi(d_{ij} \underline{\hspace{0.3cm}})=\varphi(d_{ij} \underline{\hspace{0.3cm}})=\varphi(d_{ij} \underline{\hspace{0.3cm}})\varphi(\underline{\hspace{0.3cm}} b)$, for all  $1\leq i\leq n$ and  $1\leq j\leq k$.
	
	Under the above notation, $a_i\otimes b=\sum\limits_{l=1}^t \Delta(e_{il})(e'_{il}\otimes 1)\in A\otimes A$, for all $1\leq i\leq n$, thus, there exists $f\in A$ such that $fa_i=a_i=a_if$ and $fe_{il}=e_{il}=e_{il}f$, for all  $1\leq i\leq n$ and  $1\leq l\leq t$. Therefore,
	\begin{eqnarray*}
		\varphi(\underline{\hspace{0.3cm}} a_i)\cdot(\varphi(\underline{\hspace{0.3cm}} b)\cdot x)	&=& (\imath\otimes\varphi\otimes\varphi)((\rho\otimes \imath)(\rho(x)(1\otimes b))(1\otimes a_i\otimes 1))\\
		&\stackrel{\ref{def_comoalgparcarcial_multip}}{=}& (\imath\otimes\varphi\otimes\varphi)((E\otimes 1)(\imath\otimes\Delta)(\rho(x))(1\otimes a_i\otimes b))\\
		&=& (\imath\otimes\varphi\otimes\varphi)((E\otimes 1)(\imath\otimes\Delta)(\rho(x))(1\otimes \sum\limits_{l=1}^t \Delta(e_{il})(e'_{il}\otimes 1)))\\
		&=& (\imath\otimes\varphi\otimes\varphi)((E\otimes 1)(\imath\otimes\Delta)(\rho(x))(1\otimes \sum\limits_{l=1}^t  \Delta(fe_{il})(e'_{il}\otimes 1)))\\
		&=& (\imath\otimes\varphi\otimes\varphi)((E\otimes 1)(\imath\otimes\Delta)(\rho(x))(1\otimes \Delta(f)(a_i\otimes b)))\\
		&=& (\imath\otimes\varphi\otimes\varphi)((E\otimes 1)(x^0\otimes \Delta(x^1f)(1\otimes b))(1\otimes a_i\otimes 1))\\
		&=& (\imath\otimes\varphi\otimes\varphi)((1\otimes c_i\otimes 1)(E\otimes 1)(x^{(0)}\otimes \Delta(x^{(1)}f)(1\otimes b))).
	\end{eqnarray*}
		
		On the other hand, 
		\begin{eqnarray*}
		 \varphi(\underline{\hspace{0.3cm}} a_i)\cdot x&=&(\imath\otimes\varphi)(\rho(x)(1\otimes a_i))\\
		&=&(\imath\otimes\varphi)(\rho(x)(1\otimes fa_i))\\
		&=& x^{(0)}\varphi((x^{(1)}f)a_i)\\
		&=& x^{(0)}\varphi(c_i(x^{(1)}f))\\
		&=&  (\imath\otimes\varphi)((1\otimes c_i)E\rho(x)(1\otimes f))\\
		&=& (\imath\otimes\varphi)(\sum\limits_{j=1}^km_{ij} x^{(0)}\otimes d_{ij}x^{(1)}f)\\
		&=&  \sum\limits_{j=1}^km_{ij} x^{(0)} (\varphi(d_{ij}\underline{\hspace{0.3cm}} ))(x^{(1)}f)\\
		&=& \sum\limits_{j=1}^k m_{ij} x^{(0)} (\varphi(d_{ij}\underline{\hspace{0.3cm}} )\varphi(\underline{\hspace{0.3cm}} b))(x^{(1)}f)\\
		&=& \sum\limits_{j=1}^km_{ij} x^{(0)}\varphi(d_{ij}(x^{(1)}f)_{(1)})\varphi((x^{(1)}f)_{(2)}b)\\
		&=&(\imath\otimes\varphi\otimes\varphi)((\sum\limits_{j=1}^k m_{ij}\otimes d_{ij}\otimes 1)(x^{(0)}\otimes (x^{(1)}f)_{(1)} \otimes (x^{(1)}f)_{(2)}b))\\
		&=& (\imath\otimes\varphi\otimes\varphi)((1\otimes c_i\otimes 1)(E\otimes 1)(x^{(0)}\otimes \Delta(x^{(1)}f)(1\otimes b))),
	\end{eqnarray*}
	that is, $\varphi(\underline{\hspace{0.3cm}} a_i)\cdot(\varphi(\underline{\hspace{0.3cm}} b)\cdot x)=\varphi(\underline{\hspace{0.3cm}} a_i)\cdot x$, for all $1\leq i\leq n$ and $x \in R$.
	
	 \vd
	(iv) Suppose that $w \cdot x =0$, for all $w \in\widehat{A}$, we will show that $x =0$. We know that, in particular, $\varphi(a\underline{\hspace{0.3cm}} b)\cdot x = 0$, for any $a,b\in A$. Then
	\begin{eqnarray*}
		0&=& \varphi(a\underline{\hspace{0.3cm}} b)\cdot x\\
		&=& (\imath\otimes\varphi)((1\otimes a)\rho(x)(1\otimes b))\\
		&=& (\imath\otimes\varphi)((1\otimes a)(\sum_{i} y_i \otimes b_i)))\\
		&=& \sum_i y_i\varphi(ab_i),
	\end{eqnarray*}
where the set $\{y_i\}$ is linearly independent. Hence $\varphi(ab_i)=0$, for all $i$ and $a\in A$, what implies $b_i=0$ for each $i$. In this way $\rho(x)(1\otimes b)=0$ for every $b \in A$, thus, by the injectivity of $\rho$, $x=0$.
\end{proof}

\begin{exa}Let $R$ be a partial $A_G$-comodule algebra given by Example \ref{ex_mlambda} where $\rho(x)=x\otimes m$ for all $x\in R$. In this case, $m\in M(A_G)$ such that $m(g)=1$ if $g\in N$ and $m(g)=0$ otherwise, for $N$ any subgroup of $G$ and besides that $E=1\otimes m.$ Therefore, $R$ is a partial $\widehat{A}_G$-module algebra via 
	 \begin{eqnarray*}
	\varphi(\underline{\hspace{0.3cm}} f)\cdot x&=& x\varphi(mf)\\
	&=& x \sum_{g\in N} f(g),
	\end{eqnarray*}
and $\e(\varphi(\underline{\hspace{0.3cm}} f))=(\imath\otimes\varphi)(E(1\otimes f))=\sum_{g\in N} f(g).$
	
	Note that, $\e(\varphi(\underline{\hspace{0.3cm}} f))x=\varphi(\underline{\hspace{0.3cm}} f)\cdot x$, for all $x\in R$ and $\varphi(\underline{\hspace{0.3cm}} f)\in \widehat{A}_G.$ Recall that $\widehat{A}_G \cong \Bbbk G$, then, this partial action is related to a partial group action on $R$.
\end{exa}

To show the converse of the above result we will need to assume some extra conditions. 

\begin{pro} \label{propfeia} Consider $(R, \cdot, \e)$ a symmetric left partial  $A$-module algebra, such that $A\cdot R$ has nondegenerate product. If there exist a fixed element $b\in A$ and a linear map $f: A\longrightarrow M(R)$ such that $\e=f(\underline{\hspace{0.3cm}} b)$, and 
	\begin{enumerate}
		\item[(i)] $\e(a_{(1)})\e(a_{(2)})=\e(a)$ for all $a\in A$;
		\item[(ii)]$ \e(k)=1_{M(A\cdot R)}$, where $kb=b=bk$,
	\end{enumerate}
 then,  $(A\cdot R, \rho, E)$ is a symmetric right partial $\widehat{A}$-comodule algebra with $\rho: A\cdot R\longrightarrow M((A\cdot R)\otimes\widehat{A})$ given by
 \begin{eqnarray*}
 	\rho(a\cdot x)(1\otimes\varphi(\underline{\hspace{0.3cm}} b))&=& \e(S^{-1}(b_{(2)}))(S^{-1}(b_{(1)})a\cdot x)\otimes\varphi(\underline{\hspace{0.3cm}} b_{(3)})\\
 	(1\otimes\psi(\underline{\hspace{0.3cm}} b))\rho(a\cdot x)&=& (S(b_{(3)})a\cdot x)\e (S(b_{(2)}))\otimes \psi(\underline{\hspace{0.3cm}} b_{(1)}),
 \end{eqnarray*}
 and, $E\in M((A\cdot R)\otimes\widehat{A})$ such that 
 $$E(1\otimes \varphi(\underline{\hspace{0.3cm}} b))=\e(S^{-1}(b_{(1)}))|_{A\cdot R}\otimes\varphi(\underline{\hspace{0.3cm}} b_{(2)})$$ $$(1\otimes\psi(\underline{\hspace{0.3cm}} b))E=\e(S(b_{(2)}))|_{A\cdot R}\otimes\psi(\underline{\hspace{0.3cm}} b_{(1)}),$$ 
 for all $\varphi(\underline{\hspace{0.3cm}} b)$ and $\psi(\underline{\hspace{0.3cm}} b)\in\widehat{A}$.
 
\end{pro}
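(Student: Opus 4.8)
The plan is to verify, stage by stage, that the triple $(A\cdot R,\rho,E)$ satisfies every requirement of Definition~\ref{def_comoalgparcarcial_multip} for a symmetric partial $\widehat{A}$-comodule algebra, reading the displayed formulas as prescriptions for how $\rho(a\cdot x)$ and $E$ multiply the elementary multipliers $1\otimes\varphi(\underline{\hspace{0.3cm}} b)$ on each side. The first thing I would settle is well-definedness: one must check that the two prescriptions for $\rho(a\cdot x)$—multiplication on the right by $1\otimes\varphi(\underline{\hspace{0.3cm}} b)$ and on the left by $1\otimes\psi(\underline{\hspace{0.3cm}} b)$—satisfy the multiplier compatibility relation, so that $\rho(a\cdot x)$ is a genuine element of $M((A\cdot R)\otimes\widehat{A})$, and likewise for $E$. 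This rests on the extension of the partial action to $M(R)$ given by Lemma~\ref{lema_extparcial} and Proposition~\ref{pro_exteproacao}, and on rewriting $\e(S^{-1}(b_{(2)}))(S^{-1}(b_{(1)})a\cdot x)$ through Definition~\ref{def_acaoparcialmulti}(ii) together with the anti-comultiplicativity of $S^{-1}$. The factorization hypothesis $\e=f(\underline{\hspace{0.3cm}} b)$, with its fixed covering element, is exactly what guarantees that each Sweedler leg appearing in these expressions is covered, so that all terms land in the correct spaces and the value depends only on the element $a\cdot x\in A\cdot R$.

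Next I would record the two algebraic facts. For idempotency, computing $E\bigl(E(1\otimes\varphi(\underline{\hspace{0.3cm}} b))\bigr)=\e(S^{-1}(b_{(2)}))\e(S^{-1}(b_{(1)}))\otimes\varphi(\underline{\hspace{0.3cm}} b_{(3)})$ and collapsing the first two legs by hypothesis~(i), $\e(a_{(1)})\e(a_{(2)})=\e(a)$ applied to $a=S^{-1}$ of the merged legs (again via the anti-comultiplicativity of $S^{-1}$), returns $E(1\otimes\varphi(\underline{\hspace{0.3cm}} b))$, whence $E^2=E$. That $\rho$ is an algebra homomorphism follows, as in the global situation of \cite{Galois}, by expanding $\rho\bigl((a\cdot x)(a'\cdot x')\bigr)$ with the product identities of Proposition~\ref{pro_exteproacao} and matching it against $\rho(a\cdot x)\rho(a'\cdot x')$ through the convolution product of $\widehat{A}$.

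Then come the comodule axioms. The containments in Definition~\ref{def_comoalgparcarcial_multip}(i)--(ii) are read off directly from the closed formulas for $E$ and $\rho$ and the description \eqref{deltadual1}--\eqref{deltadual2} of $\widehat{\Delta}$, noting that $\e(S^{-1}(\cdot))|_{A\cdot R}$ lies in $M(A\cdot R)$ by Remark~\ref{obs_e_aigual_a_agindoem1} and Lemma~\ref{lema_extparcial}. For the coassociativity~(iii), $(\rho\otimes\imath)\rho=(E\otimes1)(\imath\otimes\widehat{\Delta})\rho$, and its symmetric counterpart~(iv), I would evaluate both sides on a generic $1\otimes1\otimes\varphi(\underline{\hspace{0.3cm}} c)$ and reduce them to a common expression using \eqref{escritadelta1}, \eqref{escritadelta2}, the antipode relations and hypothesis~(i); the factor $E$ on the right-hand side is produced precisely by the idempotent multipliers $\e(S^{-1}(\cdots))$ that the formula for $\rho$ already carries. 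Injectivity of $\rho$ is where hypothesis~(ii) enters: applying the counit of $\widehat{A}$ to $\rho(a\cdot x)$ and using $\e(k)=1_{M(A\cdot R)}$ with $kb=b=bk$ recovers $a\cdot x$, so $\rho(a\cdot x)=0$ forces $a\cdot x=0$.

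The main obstacle is the well-definedness of the first stage together with the coassociativity~(iii). The former is delicate because the formulas are written through the pair $(a,x)$, so one must show both that they descend to $a\cdot x$ and that the left- and right-hand prescriptions assemble into a single two-sided multiplier; here the factorization $\e=f(\underline{\hspace{0.3cm}} b)$ and the covering conventions are indispensable. The latter demands careful bookkeeping of up to four Sweedler legs of $c$, two applications each of \eqref{escritadelta1} and \eqref{escritadelta2}, and a final collapse through hypothesis~(i) to manufacture exactly the idempotent $E$ on the right-hand side; once these are in hand, the remaining verifications are routine.
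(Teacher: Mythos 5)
Your plan follows the same route as the paper's proof: verify the axioms of Definition \ref{def_comoalgparcarcial_multip} one by one, using the factorization $\e=f(\underline{\hspace{0.3cm}} b)$ for well-definedness of $\rho$ and $E$ as two-sided multipliers, hypothesis (i) for $E^2=E$ and for collapsing the Sweedler legs in the coassociativity computation, the left invariance of the integral (equivalently, the identities \eqref{escritadelta1}--\eqref{escritadelta2}) for coassociativity and the homomorphism property, and hypothesis (ii) to recover $a\cdot x$ from $(\imath\otimes\widehat{\varepsilon})\rho(a\cdot x)$ and hence injectivity. The correspondence with the paper's argument is essentially exact, including your identification of coassociativity as the step demanding the heaviest bookkeeping.
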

 \begin{proof}
  	Note that, since $\e=f(\underline{\hspace{0.3cm}} b)$, $E$ and $\rho$ are well defined. It is not hard to prove that $E$ and $\rho(a \cdot x)$ lies in $ M((A\cdot R)\otimes\widehat{A})$, for all $a \cdot x \in A \cdot R$. Moreover:
 
 $\bullet $ $E^2=E$.
 	\begin{eqnarray*}
 		E(E(a\cdot x\otimes\varphi(\underline{\hspace{0.3cm}} b)))&=& E(\e(S^{-1}(b_{(1)}))(a\cdot x)\otimes\varphi(\underline{\hspace{0.3cm}} b_{(2)}))\\
 		&=&\e(S^{-1}(b_{(2)}))\e(S^{-1}(b_{(1)}))(a\cdot x)\otimes\varphi(\underline{\hspace{0.3cm}} b_{(3)})\\
 		&\stackrel{(i)}{=}& \e(S^{-1}(b_{(1})))(a\cdot x)\otimes\varphi(\underline{\hspace{0.3cm}} b_{(2)})\\
 		&=& E(a\cdot x\otimes\varphi(\underline{\hspace{0.3cm}} b)),
 	 	\end{eqnarray*}
 		for all $a\cdot x\in A\cdot R$ and $\varphi(\underline{\hspace{0.3cm}} b)\in \widehat{A}$.

 	$\bullet $ $(\rho\otimes\imath)(\rho(a\cdot x)(1\otimes\varphi(\underline{\hspace{0.3cm}} b)))(1\otimes\varphi(\underline{\hspace{0.3cm}} c)\otimes 1)=(E\otimes 1)(\imath\otimes\widehat{\Delta})(\rho(a\cdot x))(1\otimes\varphi(\underline{\hspace{0.3cm}} c)\otimes\varphi(\underline{\hspace{0.3cm}} b))$, for all $a\cdot x\in\widehat{A}\cdot R$ and $\varphi(\underline{\hspace{0.3cm}} b),\varphi(\underline{\hspace{0.3cm}} c)\in\widehat{A}$.
 	{\small
 		\begin{eqnarray*}
 			&& \hspace{-1cm}((\rho\otimes\imath)(\rho(a\cdot x)(1\otimes\varphi(\underline{\hspace{0.3cm}} b)))(1\otimes\varphi(\underline{\hspace{0.3cm}} c)\otimes 1))(1\otimes d\otimes g)\\
 		&=&(\rho(\e(S^{-1}(b_{(2)}))(S^{-1}(b_{(1)})a\cdot x))(1\otimes\varphi(\underline{\hspace{0.3cm}} c))\otimes  \varphi(\underline{\hspace{0.3cm}} b_{(3)}))(1\otimes d\otimes g)\\
 			&=& (\rho(\e(S^{-1}(b_{(2)})k)(S^{-1}(b_{(1)})a\cdot x))(1\otimes\varphi(\underline{\hspace{0.3cm}} c))\otimes  \varphi(\underline{\hspace{0.3cm}} b_{(3)}))(1\otimes d\otimes g)\\
 		&\stackrel{\ref{def_acaoparcialmulti}}{=}& (\rho(S^{-1}(b_{(1)})k_{(1)}\cdot(S(k_{(2)})a\cdot x))(1\otimes\varphi(\underline{\hspace{0.3cm}} c))\otimes \varphi(\underline{\hspace{0.3cm}} b_{(2)}))(1\otimes d\otimes g)\\
 		&=&(\e(S^{-1}(c_{(3)}))\e(S^{-1}(c_{(2)})S^{-1}(b_{(2)})k)(S^{-1}(c_{(1)})S^{-1}(b_{(1)})a\cdot x)\otimes \varphi(\underline{\hspace{0.3cm}} c_{(4)})\otimes\varphi(\underline{\hspace{0.3cm}} b_{(3)}))(1\otimes d\otimes g)\\
 			&=& \e(S^{-1}(c_{(3)}))\e(S^{-1}(c_{(2)})S^{-1} (b_{(2)}))(S^{-1}(c_{(1)})S^{-1}(b_{(1)})a\cdot x)\varphi(dc_{(4)})\varphi(gb_{(3)})\\
 			&\stackrel{(*)}{=}& \e(\varphi(d_{(2)}c_{(5)})d_{(1)}c_{(4)}S^{-1}(c_{(3)}))\e(S^{-1}(c_{(2)})
\varphi(g_{(2)}b_{(4)})g_{(1)}b_{(3)}S^{-1}(b_{(2)}))(S^{-1}(c_{(1)})S^{-1}(b_{(1)})a\cdot x)\\
 			&=&\e(d_{(1)})\varphi(d_{(2)}c_{(3)})\e(S^{-1}(c_{(2)})g_{(1)})\varphi(g_{(2)}b_{(2)})(S^{-1}(c_{(1)})S^{-1}(b_{(1)})a\cdot x)\\
 			&\stackrel{(**)}{=}& \e(d_{(1)})\e(\varphi(d_{(3)}c_{(4)})d_{(2)}c_{(3)}S^{-1}(c_{(2)})g_{(1)})(S^{-1}(c_{(1)})
\varphi(g_{(3)}b_{(3)})g_{(2)}b_{(2)}S^{-1}(b_{(1)})a\cdot x)\\
 			&=& \e(d_{(1)})\e(d_{(2)}g_{(1)})\varphi(d_{(3)}c_{(2)})(S^{-1}(c_{(1)})g_{(2)}a\cdot x)\varphi(g_{(3)}b)\\
 			&\stackrel{(***)}{=}&  \e(d_{(1)})\e(d_{(2)}g_{(1)})(\varphi(d_{(4)}c_{(3)})d_{(3)}c_{(2)}S^{-1}(c_{(1)})g_{(2)}a\cdot x)\varphi(g_{(3)}b)\\
 			&=&\e(d_{(1)})\e(d_{(2)}g_{(1)})(d_{(3)}g_{(2)}a\cdot x)\varphi(d_{(4)}c)\varphi(g_{(3)}b),
 		\end{eqnarray*} 		
 	}
in which in equalities $(*)$, $(**)$ and $(***)$ we used the left invariance of the integral $\varphi$.
 	
On the other side,
{ \small
\begin{eqnarray*}
 		\vspace{-1cm}& &((E\otimes 1)(\imath\otimes\widehat{\Delta})(\rho(a\cdot x))(1\otimes\varphi(\underline{\hspace{0.3cm}} c)\otimes\varphi(\underline{\hspace{0.3cm}} b)))(1\otimes d\otimes g)=\\
 		&\stackrel{(*)}{=}& ((E\otimes 1)(\imath\otimes\widehat{\Delta})(\rho(a\cdot x))(\imath\otimes\widehat{\Delta})(1\otimes \varphi(\underline{\hspace{0.3cm}} b_{(1)}c))(1\otimes 1\otimes \varphi(\underline{\hspace{0.3cm}} b_{(2)})))(1\otimes d\otimes g)\\
 	&=&((E\otimes 1)(\imath\otimes\widehat{\Delta})(\e(S^{-1}(b_{(2)}c_{(2)}))(S^{-1}(b_{(1)}c_{(1)})a\cdot x)\otimes\varphi(\underline{\hspace{0.3cm}} b_{(3)}c_{(3)}))(1\otimes 1\otimes \varphi(\underline{\hspace{0.3cm}} b_{(4)})))(1\otimes d\otimes g)\\
 		&=&((E\otimes 1)(\e(S^{-1}(b_{(2)}c_{(2)}))(S^{-1}(b_{(1)}c_{(1)})a\cdot x)\otimes
\widehat{\Delta}(\varphi(\underline{\hspace{0.3cm}} b_{(3)}c_{(3)}))(1\otimes \varphi(\underline{\hspace{0.3cm}} b_{(4)}))))(1\otimes d\otimes g)\\
 		&{=}&((E\otimes 1)(\e(S^{-1}(b_{(2)}c_{(2)}))(S^{-1}(b_{(1)}c_{(1)})a\cdot x)\otimes \varphi(\underline{\hspace{0.3cm}} c_{(3)})\otimes \varphi(\underline{\hspace{0.3cm}} b_{(3)})))(1\otimes d\otimes g)\\
 		&=& (\e(S^{-1}(c_{(3)}))\e(S^{-1}(b_{(2)}c_{(2)}))(S^{-1}(b_{(1)}c_{(1)})a\cdot x)\otimes\varphi(\underline{\hspace{0.3cm}} c_{(4)})\otimes\varphi(\underline{\hspace{0.3cm}} b_{(3)}))(1\otimes d\otimes g)\\
 		&=&\e(S^{-1}(c_{(3)}))\e(S^{-1}(b_{(2)}c_{(2)}))(S^{-1}(b_{(1)}c_{(1)})a\cdot x)\varphi(d c_{(4)})\varphi(g b_{(3)})\\
 		&\stackrel{(**)}{=}& \e(\varphi(d_{(2)} c_{(5)})d_{(1)} c_{(4)}S^{-1}(c_{(3)}))\e(S^{-1}(c_{(2)})
\varphi(g_{(2)} b_{(4)})g_{(1)} b_{(3)}S^{-1}(b_{(2)}))(S^{-1}(b_{(1)}c_{(1)})a\cdot x)\\
 		&=&\e(d_{(1)})\varphi(d_{(2)}c_{(3)})\e(S^{-1}(c_{(2)})g_{(1)})\varphi(g_{(2)}b_{(2)})(S^{-1}(c_{(1)})S^{-1}(b_{(1)})a\cdot x)\\
 	&\stackrel{(***)}=& \e(d_{(1)})\e(d_{(2)}g_{(1)})(d_{(3)}g_{(2)}a\cdot x)\varphi(d_{(4)}c)\varphi(g_{(3)}b),
 	\end{eqnarray*}
 	}
in $(*)$ we used $ 1\otimes\varphi(\underline{\hspace{0.3cm}} c)\otimes\varphi(\underline{\hspace{0.3cm}} b)=(\imath\otimes\widehat{\Delta})(1\otimes\varphi(\underline{\hspace{0.3cm}} b_{(1)}c))(1\otimes 1\otimes\varphi(\underline{\hspace{0.3cm}} b_{(2)}))$ and in $(**)$ and $(***)$ we used the left invariance of the integral $\varphi$. Similarly, the symmetry property can be proved.

 	$\bullet $ $\rho$ is homomorphism. Let $a\cdot x, b\cdot y\in A\cdot R$, $d\in A$ and $\varphi(\underline{\hspace{0.3cm}} c)\in\widehat{A}$. Then 
 	\begin{eqnarray*}
 		&& \hspace{-1.5cm}(\rho((a\cdot x)(b\cdot y))(1\otimes\varphi(\underline{\hspace{0.3cm}} c)))(1\otimes d)=\\
 		&=& (\rho(a_{(1)}\cdot(x(S(a_{(2)})b\cdot y)))(1\otimes\varphi(\underline{\hspace{0.3cm}} c)))(1\otimes d)\\
 		&=&\e(S^{-1}(c_{(3)}))(S^{-1}(c_{(2)})a\cdot x)(S^{-1}(c_{(1)})b\cdot y)\varphi(d c_{(4)})\\
 		&\stackrel{(*)}{=}& \e(d_{(1)})(d_{(2)}a\cdot x)(d_{(3)}b\cdot y)\varphi(d_{(4)}c)\\
	&\stackrel{(i)}{=}& \e(d_{(1)})\e(d_{(2)})(d_{(3)}a\cdot x)(d_{(4)}b\cdot y)\varphi(d_{(5)}c)\\
 	&\stackrel{(**)}{=}& \e(d_{(1)})(d_{(2)}a\cdot x)\e(d_{(3)})(d_{(4)}b\cdot y)\varphi(d_{(5)}c)\\
 	&=& \e(d_{(1)})(d_{(2)}a\cdot x)\e(d_{(3)})(\varphi(d_{(5)}c_{(3)})d_{(4)}c_{(2)}S^{-1}(c_{(1)})b\cdot y)\\
 	&\stackrel{(***)}{=}&( \e(S^{-1}(c_{(4)}))(S^{-1}(c_{(3)})a\cdot x)\e(S^{-1}(c_{(2)}))(S^{-1}(c_{(1)})b\cdot y)\otimes \varphi(\underline{\hspace{0.3cm}} c_{(5)}))(1\otimes d)\\
 &=& (\rho(a\cdot x)(\e(S^{-1}(c_{(2)}))(S^{-1}(c_{(1)})b\cdot y)\otimes\varphi(\underline{\hspace{0.3cm}} c_{(3)})))(1\otimes d)\\
  	& =&(\rho(a\cdot x)(\rho(b\cdot y)(1\otimes\varphi(\underline{\hspace{0.3cm}} c))))(1\otimes d),
 	 \end{eqnarray*}
 	
 in $(*)$ and in $(***)$ we used repeatedly the left invariance of the integral $\varphi$ as we did in the previously in the verification of the coassociativity of the coaction, in $(**)$ the equality $e(d_{(1)})(d_{(2)}a\cdot x)=(d_{(1)}a\cdot x)e(d_{(2)})$, corresponding to the symmetry of the partial action was used.
	
 	$\bullet$ $\rho$ is injective. It is enough to verify that $(\imath\otimes\widehat{\varepsilon})(\rho(a\cdot x))=a\cdot x$, where $\widehat{\varepsilon}(\varphi(\underline{\hspace{0.3cm}} a))=\varphi(a)$, for all $a\in A$. Consider $\varphi(\underline{\hspace{0.3cm}} b)\in\widehat{A}$ such that $\widehat{\varepsilon}(\varphi(\underline{\hspace{0.3cm}} b))=1_{\Bbbk}$, then 
 	\begin{eqnarray*}
 		(\imath\otimes\widehat{\varepsilon})(\rho(a\cdot x))
 		&=& (\imath\otimes\widehat{\varepsilon})(\rho(a\cdot x)(1\otimes \varphi(\_ b)))\\
 		&=& (\imath\otimes\widehat{\varepsilon})(\e(S^{-1}(b_{(2)}))(S^{-1}(b_{(1)})a\cdot x)\otimes\varphi(\_ b_{(3)}))\\
 		&=&  \e(S^{-1}(b_{(2)}))(S^{-1}(b_{(1)})a\cdot x)\varphi(b_{(3)})\\
 		&=& \e(S^{-1}(b_{(2)})k)(S^{-1}(b_{(1)})a\cdot x)\varphi(b_{(3)})\\
 		&=& \e(S^{-1}(S(k)b_{(2)}))(S^{-1}(b_{(1)})a\cdot x)\varphi(b_{(3)})\\
 		&=&  \e(k)(S^{-1}(b_{(1)})a\cdot x)\varphi(b_{(2)})\\
 		&\stackrel{(ii)}{=}& (S^{-1}(b_{(1)})S^{-1}(S(a))\cdot x)\varphi(b_{(2)})\\
 		&=& a\cdot x.
 	\end{eqnarray*}
 	Therefore, $A\cdot R$ is a symmetric partial  $\widehat{A}$-comodule algebra.
 \end{proof}
 
\begin{rem} Note that the conditions presented in Proposition \ref{propfeia} are clearly satisfied for the cases of a partial action of a Hopf algebra and for the global action of a multiplier Hopf algebra.  
\end{rem}

\begin{exa}Let $(\alpha_g, R_g)$ be a partial group action from $G$ on  algebra $R$ with nondegenerate product. If each $R_g$ is unital then $R$ is a partial $\Bbbk G$-module algebra, where $\Bbbk G$ is the group algebra, with $\delta_g\cdot x=\alpha_g(x1_{g^{-1}}),$ and $\e(\delta_g)=1_g$ for all $g\in G.$ 
	Notice that, $\e$ satisfies the hypothesis of Proposition \ref{propfeia}. Thus, by $\widehat{\Bbbk G}\cong A_G$ and  $\Bbbk G\cdot R=R$, $R$ is a symmetric right partial $ A_G$-comodule algebra with
	\begin{eqnarray*}
		\rho(x)(1\otimes\varphi(\underline{\hspace{0.3cm}} a))&=&\e(S^{-1}(a_{(2)}))(S^{-1}(a_{(1)})\cdot x)\otimes\varphi(\underline{\hspace{0.3cm}} a_{(3)})\\&=& \sum_{g\in G}a_g\e(\delta_{g^{-1}})(\delta_{g^{-1}}\cdot x)\otimes\varphi(\underline{\hspace{0.3cm}} \delta_g)\\
		&=& \sum_{g\in G}a_g\alpha_g(x1_{g^{-1}})\otimes \varphi(\underline{\hspace{0.3cm}} \delta_g)
	\end{eqnarray*}
	and $E(1\otimes \varphi(\underline{\hspace{0.3cm}} a) )=\sum_{g\in G}a_g1_{g^{-1}}\otimes\varphi(\underline{\hspace{0.3cm}} \delta_g)$, for all $\varphi(\underline{\hspace{0.3cm}} a)=\sum_{g\in G}a_g\varphi(\underline{\hspace{0.3cm}} \delta_g)\in \widehat{\Bbbk G}$.
	\end{exa}

\begin{exa}
	Consider the induced partial action given by Proposition \ref{pro_induzida}, where $L$ is a symmetric partial $A$-module algebra via $a \cdot x = 1_L(a \triangleright x),$ for all $a\in A$ and $x\in L$, with $\e(a)=a\cdot 1_L$, for all $a\in A$.
	
	In this case, $A\cdot L$  has nondegenerate product, because $\triangleright$ is a global action of $A$ on $R$. Moreover, there
	exists $b\in A$ such that $b\triangleright 1_L=1_L$, since $1_L$ is a  central idempotent in $R=A\triangleright R$. Then 
	$$\e(a)=1_L(a\triangleright 1_L)=1_L(a\triangleright (b\triangleright1_L))=1_L(ab\triangleright 1_L)=\e(ab),$$
	for all $a\in A$. Similarly,  $\e(a_{(1)})\e(a_{(2)})=\e(a)$, for all $a\in A$. Besides that, considering the element $k\in A$ such that, $kb=b=bk$, we have that $\e(k)=1_{M(A\cdot L)}$. 
	
	Therefore, $A\cdot L$ is a symmetric right partial $\widehat{A}$-comodule algebra.
\end{exa}

\begin{cor}
	Let $(A\cdot R, \rho, E)$ be a partial $\widehat{A}$-comodule algebra derived by Proposition \ref{propfeia}, then there is a structure of a partial $A$-module algebra on $A \cdot R$ which coincides with the original partial $A$-module algebra $(A \cdot R, \cdot, \e)$.  
\end{cor}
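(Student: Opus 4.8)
The plan is to read the corollary as a biduality statement: Proposition \ref{propfeia} turns the partial action $(\,\cdot\,,\e)$ into the partial coaction $(\rho,E)$, while Proposition \ref{pro_dualocoacao} turns a partial coaction back into a partial action; the assertion is that composing the two, through the canonical identification $\widehat{\widehat{A}}\cong A$, returns the original structure. Accordingly I would first apply Proposition \ref{pro_dualocoacao} with $\widehat{A}$ in the role of the base regular multiplier Hopf algebra. This is legitimate because Proposition \ref{propfeia} guarantees that $(A\cdot R,\rho,E)$ is a symmetric right partial $\widehat{A}$-comodule algebra and, by hypothesis, $A\cdot R$ has nondegenerate product. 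The output is a symmetric left partial $\widehat{\widehat{A}}$-module algebra; writing its action as $\star$ and its idempotent map as $\e'$, the defining formulas read $\widehat{\varphi}(\underline{\hspace{0.3cm}} w)\star s=(\imath\otimes\widehat{\varphi})(\rho(s)(1\otimes w))$ and $\e'(\widehat{\varphi}(\underline{\hspace{0.3cm}} w))\,s=(\imath\otimes\widehat{\varphi})(E(s\otimes w))$, where $\widehat{\varphi}$ is the left integral on $\widehat{A}$. Transporting along $\widehat{\widehat{A}}\cong A$ produces a partial $A$-module structure on $A\cdot R$, so the whole problem reduces to checking $\star=\cdot$ and $\e'=\e$.

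Next I would carry out the two comparison computations on a typical element $a\cdot x\in A\cdot R$, testing against $w=\varphi(\underline{\hspace{0.3cm}} b)$. For the action, substitute $\rho(a\cdot x)(1\otimes\varphi(\underline{\hspace{0.3cm}} b))=\e(S^{-1}(b_{(2)}))(S^{-1}(b_{(1)})a\cdot x)\otimes\varphi(\underline{\hspace{0.3cm}} b_{(3)})$ from Proposition \ref{propfeia}, apply $\imath\otimes\widehat{\varphi}$, and collapse the $\widehat{A}$-leg via the defining relation $\widehat{\varphi}(\varphi(\underline{\hspace{0.3cm}} c))=\varepsilon(c)$ of the dual integral; the factor $\varepsilon(b_{(3)})$ contracts the threefold coproduct to a twofold one, leaving $\e(S^{-1}(b_{(2)}))(S^{-1}(b_{(1)})a\cdot x)$. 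Unfolding $\Delta(S^{-1}(b_{(2)}))=S^{-1}(b_{(3)})\otimes S^{-1}(b_{(2)})$, applying item (ii) of Definition \ref{def_acaoparcialmulti}, and using the antipode identity $b_{(2)}S^{-1}(b_{(1)})\otimes b_{(3)}=1\otimes b$ rewrites this as $S^{-1}(b)\cdot(a\cdot x)$. Thus $\widehat{\varphi}(\underline{\hspace{0.3cm}}\varphi(\underline{\hspace{0.3cm}} b))\star(a\cdot x)=S^{-1}(b)\cdot(a\cdot x)$, i.e. the recovered action of this element is exactly the original action of $S^{-1}(b)$; since $S^{-1}(b)$ is the image of $\widehat{\varphi}(\underline{\hspace{0.3cm}}\varphi(\underline{\hspace{0.3cm}} b))$ under biduality, this gives $\star=\cdot$. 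The same scheme applied to $\e'$, starting from $E(1\otimes\varphi(\underline{\hspace{0.3cm}} b))=\e(S^{-1}(b_{(1)}))|_{A\cdot R}\otimes\varphi(\underline{\hspace{0.3cm}} b_{(2)})$ and again using $\widehat{\varphi}(\varphi(\underline{\hspace{0.3cm}} c))=\varepsilon(c)$, collapses to $\e'(\widehat{\varphi}(\underline{\hspace{0.3cm}}\varphi(\underline{\hspace{0.3cm}} b)))=\e(S^{-1}(b))$, consistent with the very same identification, so both pieces of the recovered structure agree with $(\,\cdot\,,\e)$.

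I expect the main obstacle to be the bookkeeping around the biduality $\widehat{\widehat{A}}\cong A$: one must pin down exactly which element of $A$ corresponds to $\widehat{\varphi}(\underline{\hspace{0.3cm}}\varphi(\underline{\hspace{0.3cm}} b))$ and use the correct normalization of $\widehat{\varphi}$ and of the relation $\widehat{\varphi}(\varphi(\underline{\hspace{0.3cm}} c))=\varepsilon(c)$ (up to the modular twist that can occur in the non-unital setting). The delicate point is that this identification must be the \emph{same} in the $\star$-computation and in the $\e'$-computation; the fact that both independently produce $S^{-1}(b)$ is precisely the internal consistency check that makes the two structures coincide rather than merely be isomorphic. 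Once the identification is fixed, the remaining steps are the antipode and left-invariance manipulations already rehearsed in the proof of Proposition \ref{propfeia} (in particular in its verification that $(\imath\otimes\widehat{\varepsilon})\rho=\imath$), together with the covered-Sweedler conventions; these are routine but must be handled carefully since $A$, $\widehat{A}$ and $A\cdot R$ are all non-unital.
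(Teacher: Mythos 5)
Your proposal is correct and follows essentially the same route as the paper: apply Proposition \ref{pro_dualocoacao} to the comodule structure produced by Proposition \ref{propfeia}, evaluate the resulting action and $\e$-map on generators using the pairing identity between the dual integral and the counit, and read off the original structure through $\widehat{\widehat{A}\ }\cong A$. The only difference is bookkeeping: the paper absorbs the antipode twist into the parametrization $\widehat{\widehat{b}}=\widehat{\psi}(\underline{\hspace{0.3cm}}\varphi(\underline{\hspace{0.3cm}} S(b)))$ so that the recovered action is literally $b\cdot(-)$, whereas you obtain $S^{-1}(b)\cdot(-)$ and push the twist into the biduality identification — a cosmetic choice you correctly flag as the delicate point.
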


\begin{proof}
Let $A\cdot R$ be a partial $\widehat{A}$-comodule algebra derived by Proposition \ref{propfeia}, i. e., $\rho(a \cdot x)(1 \otimes \varphi(\underline{\hspace{0.3cm}} b)) = \e(S^{-1}(b_{(2)}))(S^{-1}(b_{(1)})a \cdot x) \otimes \varphi(\underline{\hspace{0.3cm}} b_{(3)})$ and $E(1 \otimes \varphi(\underline{\hspace{0.3cm}}  b))=\e(S^{-1}(b_{(1)}))|_{A \cdot R} \otimes \varphi(\underline{\hspace{0.3cm}}  b_{(2)})$. Thus, by Proposition \ref{pro_dualocoacao}, $\widehat{\widehat{A} \ }$ is a partial $(A \cdot R)$-module algebra with
$$\widehat{\widehat{b}}  \rightharpoonup (a \cdot x) = (\imath \otimes \widehat{\psi})(\rho(a \cdot x)(1 \otimes \varphi(\underline{\hspace{0.3cm}} S(b)))) $$
$$\widetilde{\e}(\widehat{\widehat{b}})(a \cdot x)= (\imath \otimes \widehat{\psi} )(E(a \cdot x \otimes \varphi(\underline{\hspace{0.3cm}}  S(b))) ),$$
where $\widehat{\widehat{b}}= \widehat{\psi}(\underline{\hspace{0.3cm}} \varphi(\underline{\hspace{0.3cm}}  S(b))) $.
Indeed, for all $a,b \in A$ and $x \in R$,
\begin{eqnarray*}
\widehat{\widehat{b}}  \rightharpoonup (a \cdot x) &=& (\imath \otimes \widehat{\psi})(\rho(a \cdot x)(1 \otimes \varphi(\underline{\hspace{0.3cm}} S(b))))\\
&=&\e(b_{(2)})(b_{(3)}a \cdot x) \widehat{\psi}(\varphi (\underline{\hspace{0.3cm}}S(b_{(1)})))\\
&=&\e(b_{(1)})(b_{(2)}a \cdot x)\\
&=& b \cdot (a \cdot x),
\end{eqnarray*}
in which we used in the third equality the identity $\widehat{\psi}(\varphi (\underline{\hspace{0.3cm}}S(b)))=\varepsilon(b)$ (\cite{Frame}, Proposition 4.8). Moreover,
\begin{eqnarray*}
\widetilde{\e}(\widehat{\widehat{b}})(a \cdot x)&=& (\imath \otimes \widehat{\psi} )(E(a \cdot x \otimes \varphi(\underline{\hspace{0.3cm}}  S(b))) )\\
&=& (\imath \otimes \widehat{\psi} )(\e(S^{-1}(S(b_{(2)})))(a\cdot x) \otimes \varphi(\underline{\hspace{0.3cm}}  S(b_{(1)})))\\
&=& \e(b_{(2)})(a\cdot x) \widehat{\psi}(\varphi(\underline{\hspace{0.3cm}}  S(b_{(1)})))\\
&=&\e(b)(a \cdot x).
\end{eqnarray*}

Therefore, $A$ is a partial $(A \cdot R)$-module algebra via $\cdot$ .
\end{proof}


\section{Morita Context} In reference \cite{Galois} the authors constructed a Morita context connecting the smash product algebra and the coinvariant algebra. Generalizing these ideas, we extend this result to the setting of partial (co)actions of multiplier Hopf algebras.
\subsection{Smash product algebra and the coinvariant algebra} We start defining the smash product and the algebra of (co)invariant elements. We also present their respective properties which are fundamental for the construction of a generalized Morita context.
\begin{defi} Let $(R, \cdot, \e)$ be a partial $A$-module algebra. The smash product algebra $R\# A$ is the vector space $R\otimes A$ endowed with the product given by the following rule
	\begin{eqnarray*}
		(x\# a)(y\# b)=x(a_{(1)}\cdot y)\# a_{(2)}b,
	\end{eqnarray*}
for all	$x,y \in R$ and $a,b\in A$.
	\label{def_smash}
\end{defi}

Notice that the smash product makes sense because $\Delta(a)(1\otimes b)=a_{(1)}\otimes a_{(2)}b\in A\otimes A$.

\begin{pro}If $(R, \cdot, \e)$ is a partial $A$-module algebra, then the product of $R\# A$ is left nondegenerate, i.e., if $(x\# a)(y\# b)=0$, for all $(x\# a)\in R\# A$, then, $(y\# b)=0$.
\end{pro}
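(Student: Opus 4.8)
The plan is to test a putative left-annihilated element, which by linearity I write as a general element $z=\sum_i y_i\otimes b_i\in R\otimes A$ (so $y\#b=z$), and to peel off its two tensor legs one at a time. By the multiplication rule the hypothesis reads
\[
(x\#a)z=\sum_i x(a_{(1)}\cdot y_i)\otimes a_{(2)}b_i=0\qquad\text{for all }x\in R,\ a\in A,
\]
where the expression is legitimate because $\Delta(a)(1\otimes b_i)=a_{(1)}\otimes a_{(2)}b_i\in A\otimes A$.

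First I would strip the leading factor $x$. Fixing $a$ and writing $\sum_i(a_{(1)}\cdot y_i)\otimes a_{(2)}b_i=\sum_k r_k\otimes s_k$ with the $s_k$ linearly independent, the hypothesis becomes $\sum_k(xr_k)\otimes s_k=0$ for every $x\in R$, whence $xr_k=0$ for all $x$ and all $k$. Since the product of $R$ is nondegenerate, each $r_k=0$, that is,
\begin{equation}
\sum_i(a_{(1)}\cdot y_i)\otimes a_{(2)}b_i=0\qquad\text{for all }a\in A. \tag{$\ast$}
\end{equation}

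The remaining, and more delicate, task is to deduce $\sum_i y_i\otimes b_i=0$ from $(\ast)$, i.e.\ to strip the variable $a$. Here one cannot simply apply $\imath\otimes\varepsilon$: because $\varepsilon$ is multiplicative, that contraction only returns the single relation $a\cdot\big(\sum_i\varepsilon(b_i)y_i\big)=0$ for all $a$, which via nondegeneracy of the action (item (iv) of Definition \ref{def_acaoparcialmulti}) yields $\sum_i\varepsilon(b_i)y_i=0$ but \emph{not} the full vanishing of $z$. To recover every component I would instead exploit the entire freedom in $a$ together with the bijectivity of $T_1\colon A\otimes A\to A\otimes A$, $p\otimes q\mapsto\Delta(p)(1\otimes q)$: after introducing a covering element and applying $T_1^{-1}(u\otimes v)=u_{(1)}\otimes S(u_{(2)})v$ to the two $A$-legs of $(\ast)$, one separates $a_{(2)}b_i$ into an $a$-part and a $b_i$-part, and then nondegeneracy of the action, used leg-by-leg exactly as in the isolation argument that works for the model Example \ref{chato}, forces $\sum_i y_i\otimes b_i=0$, hence $y\#b=0$.

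The main obstacle is precisely this disentanglement of the coproduct in the second step. In $(\ast)$ the factor $a_{(1)}$ has already been fed into the partial action on the first leg, and, in contrast to the global situation where $pq\cdot y=p\cdot(q\cdot y)$ would allow one to reabsorb it, there is no way to return it to $A$ so as to cancel the $a_{(2)}$ that is bound to $b_i$. Consequently the separation of the distinct components $b_i$ must be achieved \emph{without} ever contracting $a$ against itself, relying instead on the nondegeneracy of the pairing between $A$ and $\widehat{A}$ (equivalently, on the bijectivity of $T_1$) to detect each $b_i$ individually; carrying this out while respecting the covering conventions of the multiplier Hopf algebra calculus is the technical heart of the argument.
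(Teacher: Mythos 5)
Your first reduction --- stripping the factor $x$ by writing $\sum_i(a_{(1)}\cdot y_i)\otimes a_{(2)}b_i$ with linearly independent second legs and invoking nondegeneracy of the product of $R$ --- is exactly the paper's first step and is correct. The gap is in the second step, which you rightly identify as the heart of the matter but do not actually carry out. The mechanism you propose, ``applying $T_1^{-1}(u\otimes v)=u_{(1)}\otimes S(u_{(2)})v$ to the two $A$-legs of $(\ast)$,'' cannot be executed: $(\ast)$ lives in $R\otimes A$ and has only one $A$-leg, because $a_{(1)}$ has already been consumed by the partial action on the first factor, so there is no element of $A\otimes A$ left on which to invert $T_1$. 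You concede precisely this in your closing paragraph (``there is no way to return it to $A$''), and then assert that the separation ``must be achieved'' via the nondegeneracy of the pairing --- but that is a restatement of the difficulty, not an argument.

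The paper's actual device is more elementary and sidesteps any inversion: multiply the second tensor leg of $(\ast)$ on the left by an arbitrary $c\in A$, so that the pair $(a_{(1)},\,ca_{(2)})$ occurring in $\sum_i(a_{(1)}\cdot y_i)\otimes ca_{(2)}b_i=0$ is governed by $(1\otimes c)\Delta(a)$. Since $A$ is regular, the elements $(1\otimes c)\Delta(a)$ span all of $A\otimes A$, so the identity holds with $a_{(1)}\otimes ca_{(2)}$ replaced by an arbitrary simple tensor $d\otimes e$; that is, $\sum_i(d\cdot y_i)\otimes eb_i=0$ for all $d,e\in A$ \emph{independently}. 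Applying a linear functional $f$ to the first leg gives $\sum_i f(d\cdot y_i)\,eb_i=0$; nondegeneracy of the product of $A$ removes $e$, linear independence of the $b_i$ gives $f(d\cdot y_i)=0$ for every $f$, hence $d\cdot y_i=0$ for all $d\in A$, and condition (iv) of Definition \ref{def27} yields $y_i=0$. This decoupling of $a_{(1)}$ from $a_{(2)}$ by an auxiliary multiplication, rather than by inverting a canonical map, is what your sketch is missing; without it (or an equivalent substitute) the proof is incomplete.
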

\begin{proof}
	First of all, we observe that any element of $R\# A$ can be written in the form $\sum\limits_{i=1}^{n} y_i \# b_i $ with the $b_i$'s linearly independent. Assuming that $(x\# a)(\sum\limits_{i=1}^ny_i\# b_i)=0$ for all $x\in R$ and $a\in A$, we need to prove that $\sum\limits_{i=1}^ny_i\# b_i=0$. It is enough to prove that $y_i=0$, for all $i \in {1,...,n}$. Indeed, it follows from the nondegeneracy of the product of $R$ that $\sum\limits_{i=1}^n(a_{(1)}\cdot y_i)\# a_{(2)}b_i=0 $.
	
	Now, considering that  for any $c\in A$ there exist $d,e\in A$ such that $(1 \otimes c)\Delta(a)=d \otimes e$,
	\begin{eqnarray*}
		0 &=& (a_{(1)}\cdot y_i)\# ca_{(2)}b_i\\
		&=& \sum_{i=1}^{n} d\cdot y_i\# eb_i.   
		\end{eqnarray*}
	 Since the product of $A$ is nondegenerate $\sum_{i=1}^{n} d\cdot y_i\# b_i=0$. As the $b_i$'s are linearly independent, it follows that $f(d \cdot y_i)=0$, for all $d \in A$, $i \in \{1,...,n\}$ and any linear functional $f$  of $R$. Hence $d\cdot y_i=0$ for all $d\in A$ and so, because of the nondegeneracy of the action (condition (iv) of Definition \ref{def27}),  $y_i=0$, for all $1 \leq i \leq n$.
\end{proof}

\begin{rem} Suppose $(R, \cdot, \e)$ be a symmetric partial $A$-module algebra. If $A\cdot R$ has a nondegenerate product and $m\in M(R)$, then $m|_{A\cdot R}=(\overline{m},\overline{\overline{m}})\in M(A\cdot R)$ as follows 	
		\begin{eqnarray*}
		\overline{m}: A\cdot R  \longrightarrow  A\cdot R  \  \ \ &\mbox{and}& \ \  \overline{\overline{m}}: A\cdot R  \longrightarrow  A\cdot R\\
		x\e(a)  \longmapsto  mx\e(a)  & \  & \ \ \	\ \ \ \ \e(a)x \longmapsto  \e(a)xm.\qquad  
	\end{eqnarray*}
	The well definition of these maps are ensured by Proposition \ref{pro_igualextensaoacao}.
\end{rem}

In what follows, $A\cdot R$ will be an algebra with a nondegenerate product.

\begin{defi}\label{def_invaparcial} Let $(R, \cdot, \e)$ be a symmetric partial $A$-module algebra. We define the \emph{invariant algebra} as the subalgebra of the elements of $M(R)$ which are invariant by the partial action, as follows
	\begin{eqnarray*}
		R^{\underline{A}}=\{m\in M(R); \ a\cdot m=m|_{A\cdot R}(a\cdot 1_{M(R)}) \ \mbox{and} \ a\cdot m=(a\cdot 1_{M(R)})m|_{A\cdot R}, \forall a\in A\}.
	\end{eqnarray*}
	\end{defi}
We denote the invariant subalgebra by $R^{\underline{A}}$ following the notation given in \cite{Galois}.
\begin{pro} If $(R, \cdot, \e)$ is a symmetric partial $A$-module algebra, then
	\begin{enumerate}
		\item[(i)] $\{ m\in M(R); \ a\cdot(xm)=(a\cdot x)m \ \mbox{and} \ a\cdot(mx)=m(a\cdot x), \forall a\in A \ \mbox{and}\ x\in R\} \subseteq R^{\underline{A}}$;
		\item[(ii)] $R^{\underline{A}}\subseteq \{m\in M(R); \ a\cdot(m(c\cdot x))=m(a\cdot x), \ ac=a=ca, \ a\cdot x=a \cdot c \cdot x, \ \forall a\in A \ \  x\in R\}$.
	\end{enumerate}
	\label{carac_invari}
\end{pro}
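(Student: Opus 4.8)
The plan is to verify each inclusion by testing multiplier identities on the algebra $A\cdot R$, which is legitimate because $A\cdot R$ has nondegenerate product and hence a multiplier is uniquely determined by its left (respectively right) action on $A\cdot R$. The ingredients I will lean on are the explicit extension formulas $\overline{(a\cdot m)}(b\cdot x)=a_{(1)}\cdot(m(S(a_{(2)})b\cdot x))$ and $\overline{\overline{(a\cdot m)}}(b\cdot x)=a_{(2)}\cdot((S^{-1}(a_{(1)})b\cdot x)m)$ from Lemma~\ref{lema_extparcial}, together with the defining relations $\e(a)(b\cdot x)=a_{(1)}\cdot(S(a_{(2)})b\cdot x)$ and $(b\cdot x)\e(a)=a_{(2)}\cdot(S^{-1}(a_{(1)})b\cdot x)$ of Definition~\ref{def_acaoparcialmulti}(ii),(vi), and the identification $\e(a)|_{A\cdot R}=a\cdot 1_{M(R)}$ from Remark~\ref{obs_e_aigual_a_agindoem1}.

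For inclusion (i), I take $m$ in the left-hand set, so that $a\cdot(xm)=(a\cdot x)m$ and $a\cdot(mx)=m(a\cdot x)$ for all $a\in A$, $x\in R$, and I must produce the two invariance equations of Definition~\ref{def_invaparcial}. Evaluating the extended multiplier on a generic element $b\cdot x$ of $A\cdot R$, the formula for $\overline{(a\cdot m)}$ gives $a_{(1)}\cdot(m(S(a_{(2)})b\cdot x))$; pulling $m$ out through the partial action by means of the hypothesis $a\cdot(mz)=m(a\cdot z)$ turns this into $m\bigl(a_{(1)}\cdot(S(a_{(2)})b\cdot x)\bigr)$, which by Definition~\ref{def_acaoparcialmulti}(ii) equals $m(\e(a)(b\cdot x))=m(a\cdot 1_{M(R)})(b\cdot x)$. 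By nondegeneracy of $A\cdot R$ this yields $a\cdot m=m|_{A\cdot R}(a\cdot 1_{M(R)})$. The symmetric computation, using $\overline{\overline{(a\cdot m)}}$, the hypothesis $a\cdot(zm)=(a\cdot z)m$, and Definition~\ref{def_acaoparcialmulti}(vi), gives $a\cdot m=(a\cdot 1_{M(R)})m|_{A\cdot R}$, so $m\in R^{\underline{A}}$.

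For inclusion (ii), I start from $m\in R^{\underline{A}}$, fix $a\in A$ and $x\in R$, and choose by Definition~\ref{def_acaoparcialmulti}(iii) an element $c$ with $ac=a=ca$ and $a\cdot x=a\cdot(c\cdot x)$. Applying Proposition~\ref{pro_exteproacao}(i) (with the roles $b=c$ and $n=x$) gives $a\cdot(m(c\cdot x))=(a_{(1)}\cdot m)(a_{(2)}c\cdot x)$; invariance of $m$ lets me replace $a_{(1)}\cdot m$ by $m|_{A\cdot R}\,\e(a_{(1)})$, and after factoring $m$ to the left I am left with $m\bigl(\e(a_{(1)})(a_{(2)}c\cdot x)\bigr)$. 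The inner expression is exactly $a\cdot(c\cdot x)$ by the identity $a\cdot(b\cdot x)=\e(a_{(1)})(a_{(2)}b\cdot x)$ already used in the text, and the choice of $c$ finally gives $m(a\cdot(c\cdot x))=m(a\cdot x)$, as required.

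The step I expect to require the most care is not any single algebraic identity but the bookkeeping of Sweedler covering in the multiplier setting: in both parts I apply a hypothesis stated for a single element of $A$ ``under the coproduct'', where the $R$-argument itself depends on $a_{(2)}$ (for instance $S(a_{(2)})b\cdot x$). I will justify this by reading each such hypothesis as an equality of bilinear maps $A\times R\to R$ and by checking that the relevant element $\sum a_{(1)}\otimes(S(a_{(2)})b\cdot x)$ genuinely lies in $A\otimes R$, which holds by regularity of $A$, so that $a_{(1)}\otimes S(a_{(2)})b\in A\otimes A$ once covered by $b$. Granting this, every manipulation above is a finite sum of instances of the one-element identities, and the nondegeneracy of $A\cdot R$ closes each argument.
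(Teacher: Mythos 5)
Your argument is correct and follows essentially the same route as the paper: for (i) you evaluate the extended multiplier $a\cdot m$ on $b\cdot x$ via Lemma \ref{lema_extparcial}, pull $m$ through the action using the hypothesis, and recognize $\e(a)(b\cdot x)=(a\cdot 1_{M(R)})(b\cdot x)$; for (ii) you run the same chain of equalities as the paper (Proposition \ref{pro_exteproacao}(i) plus invariance of $m$ plus the choice of $c$ from Definition \ref{def_acaoparcialmulti}(iii)), merely written in the reverse direction. The only addition is your explicit remark on Sweedler covering and on multipliers being determined by one-sided action on the nondegenerate algebra $A\cdot R$, which the paper leaves implicit.
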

\begin{proof} (i) Let $m\in M(R)$ such that $a\cdot(xm)=(a\cdot x)m$ and $a\cdot(mx)=m(a\cdot x),$ for all $a\in A$ and $x\in R$, thus
		\begin{eqnarray*}
		(a\cdot m)(b\cdot x)&\stackrel{\ref{lema_extparcial}}{=}& a_{(1)}\cdot(m(S(a_{(2)})b\cdot x))\\
		&=& m(a_{(1)}\cdot(S(a_{(2)})b\cdot x))\\
		&=& m|_{A\cdot R}(a_1\cdot(S(a_2)b\cdot x))\\
		&=&m|_{A\cdot R}(a\cdot 1_{M(R)})(b\cdot x),
	\end{eqnarray*}
for all $b\in A$. Similarly, $(b\cdot x)(a\cdot m)=(b\cdot x)(a\cdot 1_{M(R)})m|_{A\cdot R}$.
	
	\vd
	
	(ii) Let $a\in A$, $x\in R$ and $m\in R^{\underline{A}}$, then
	\begin{eqnarray*}
		m(a\cdot x)&\stackrel{\ref{def_acaoparcialmulti}(iii)}{=}&m(a\cdot (c\cdot x))\\
		&=&  m|_{A\cdot R}(a\cdot (c\cdot x))\\
		&\stackrel{\ref{pro_exteproacao}(i)}{=}& m|_{A\cdot R}(a_{(1)}\cdot 1_{M(R)})(a_{(2)}c\cdot x)\\
		&=&(a_{(1)} \cdot m)(a_{(2)}c\cdot x)\\
		&=&a\cdot (m(c\cdot x)).
	\end{eqnarray*}
\end{proof}

\begin{defi} Let $(R,\rho, E)$ be a symmetric partial $A$-comodule algebra. We define the \emph{coinvariant algebra} as the subalgebra of the elements of $M(R)$ which are invariant by $\rho$ as follows
	\begin{eqnarray}
	R^{\underline{coA}}=\{m\in M(R); \ \rho(m)=(m\otimes 1)E \ \mbox{and} \ \rho(m)=E(m\otimes 1)\}. \label{def_coinvparcial}
	\end{eqnarray}
\end{defi}

\begin{pro}\label{carac_coinv} If $(R, \rho, E)$ is a symmetric partial $A$-comodule algebra, then
	\begin{eqnarray*}
		R^{\underline{coA}}=\{ m\in M(R); \ w\cdot (mx)=m(w\cdot x) \ \mbox{and} \ w\cdot (xm)=(w\cdot x)m, x\in R \ \mbox{and} \ w\in \widehat{A} \}.
	\end{eqnarray*}
	\end{pro}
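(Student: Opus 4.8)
The plan is to prove the set equality by establishing, for each $m\in M(R)$, the two equivalences
\[
\rho(m)=(m\otimes 1)E \iff w\cdot(mx)=m(w\cdot x)\ \text{ for all } w\in\widehat{A},\ x\in R,
\]
\[
\rho(m)=E(m\otimes 1) \iff w\cdot(xm)=(w\cdot x)m\ \text{ for all } w\in\widehat{A},\ x\in R,
\]
since by \eqref{def_coinvparcial} membership in $R^{\underline{coA}}$ is exactly the conjunction of the two left-hand conditions. Throughout I would use that, by Proposition \ref{pro_extcoapart}, $\rho$ extends to an algebra homomorphism $\rho\colon M(R)\to M(R\otimes A)$ with $\rho(1_{M(R)})=E$, so that $\rho(mx)=\rho(m)\rho(x)$, $\rho(xm)=\rho(x)\rho(m)$, and, since $E$ is idempotent, $E\rho(m)=\rho(m)=\rho(m)E$. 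I would also use the description of the dual partial action from Proposition \ref{pro_dualocoacao}, namely $w\cdot x=(\imath\otimes\varphi)(\rho(x)(1\otimes a))$ for $w=\varphi(\underline{\hspace{0.3cm}} a)$, together with Lemma \ref{rhoigual_rhoe}.

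For the first equivalence I would compute both sides with $w=\varphi(\underline{\hspace{0.3cm}} a)$: since $\rho(mx)(1\otimes a)=\rho(m)\rho(x)(1\otimes a)\in R\otimes A$, one gets $w\cdot(mx)=(\imath\otimes\varphi)(\rho(m)\rho(x)(1\otimes a))$, while $m(w\cdot x)=(\imath\otimes\varphi)((m\otimes 1)\rho(x)(1\otimes a))$. Using $\rho(x)=E\rho(x)$ and $\rho(m)E=\rho(m)$, the difference of the two sides equals $(\imath\otimes\varphi)(\Phi\,\rho(x)(1\otimes a))$ with $\Phi:=\rho(m)-(m\otimes 1)E$, which satisfies $\Phi E=\Phi$. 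As $\rho(x)(1\otimes a)$ ranges over $\rho(R)(1\otimes A)=E(R\otimes A)$ (Proposition \ref{pro_bijetivcoacao}), the module identity for all $w,x$ is equivalent to $(\imath\otimes\varphi)(\Phi\zeta)=0$ for every $\zeta\in E(R\otimes A)$. To remove the integral I would fix $\zeta$, replace it by $\zeta(1\otimes a)$, and invoke faithfulness of the left integral $\varphi$ (precisely the argument used in part (iv) of the proof of Proposition \ref{pro_dualocoacao}) to conclude $\Phi\zeta=0$ for all $\zeta\in E(R\otimes A)$; since $\Phi E=\Phi$ this gives $\Phi(R\otimes A)=0$, whence $\Phi=0$ by nondegeneracy of the product on $R\otimes A$. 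The converse is immediate.

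The second equivalence I would treat analogously: the symmetric computation shows that $w\cdot(xm)=(w\cdot x)m$ for all $w,x$ is equivalent to $(\imath\otimes\varphi)(\rho(x)\,\Psi\,(1\otimes a))=0$ for all $x\in R$, $a\in A$, where now $\Psi:=\rho(m)-E(m\otimes 1)$ satisfies $E\Psi=\Psi$. Stripping the integral on the $A$-leg by faithfulness gives $\rho(x)\Psi(1\otimes b)=0$ for all $x,b$; multiplying on the left and using the symmetric bijection $(1\otimes A)\rho(R)=(R\otimes A)E$ (the companion of Proposition \ref{pro_bijetivcoacao} valid for symmetric partial coactions) together with $E\Psi=\Psi$ and nondegeneracy then forces $\Psi=0$. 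Combining the two equivalences identifies $R^{\underline{coA}}$ with the asserted set.

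I expect the main obstacle to be the passage from identities that only hold after applying $(\imath\otimes\varphi)$ to genuine equalities of multipliers; this is where faithfulness of $\varphi$ and the two bijection identities for $\rho$ are essential. The second equivalence is the more delicate one, because $\Psi$ sits sandwiched as $\rho(x)\Psi(1\otimes a)$, so one must strip on the $A$-leg and then eliminate the left factor $\rho(x)$ via the symmetric bijection, all while keeping careful track of the covering conditions (e.g.\ that $\Phi(R\otimes A)\subseteq R\otimes A$) that make each application of $(\imath\otimes\varphi)$ legitimate.
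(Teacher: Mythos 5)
Your proposal is correct and runs along essentially the same lines as the paper's proof: both rest on the extension of $\rho$ to $M(R)$ with $\rho(1_{M(R)})=E$, Lemma \ref{rhoigual_rhoe}, the identities $\rho(R)(1\otimes A)=E(R\otimes A)$ and $(1\otimes A)\rho(R)=(R\otimes A)E$, and separation of points by the integral (which the paper phrases as letting $c$ range over $A$ in $(\imath\otimes\varphi(\underline{\hspace{0.3cm}} c))$ — exactly your $\zeta\mapsto\zeta(1\otimes a)$ stripping step). Your packaging via the difference multipliers $\Phi$ and $\Psi$, followed by nondegeneracy of the product on $R\otimes A$, is only an organizational variant of the paper's direct computation of both sides.
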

\begin{proof} Let $m\in R^{\underline{coA}}$, $w=\varphi(\underline{\hspace{0.3cm}} a)\in\widehat{A}$ and $x\in R$. Then
	\begin{eqnarray*}
		(w\cdot x)m &=& (\imath\otimes\varphi)(\rho(x)(1\otimes a))m\\
		&=& (\imath\otimes\varphi)(\rho(x)(1\otimes a)(m\otimes 1))\\
		&=&(\imath\otimes\varphi)(\rho(x)(m\otimes 1)(1\otimes a))\\
		&=& (\imath\otimes\varphi)(\rho(x)E(m\otimes 1)(1\otimes a))\\
		&=&  (\imath\otimes\varphi)(\rho(x)\rho(m)(1\otimes a))\\
		&=&  (\imath\otimes\varphi)(\rho(xm)(1\otimes a))\\
		&=& w\cdot(xm).
	\end{eqnarray*}
	 In a similar way, $w\cdot(mx)=m(w\cdot x)$.
	
	\vu
	
 Conversely, given $\varphi(\underline{\hspace{0.3cm}} c) \in \widehat{A}$ and writing $E(x\otimes a)=\sum_{k}\rho(y_k)(1\otimes b_k)$ we have that
	\begin{eqnarray*}
		(\imath\otimes\varphi(\underline{\hspace{0.3cm}} c))((m\otimes 1)E(x\otimes a))&=&m(\imath\otimes\varphi)(E(x\otimes a)(1\otimes c))\\
		&=& m(\imath\otimes\varphi)(\sum_{k}\rho(y_k)(1\otimes b_k)(1\otimes c))\\
		&=&\sum_{k}m((\imath\otimes\varphi)(\rho(y_k)(1\otimes b_kc)))\\
		&=& \sum_{k} m(\varphi(\underline{\hspace{0.3cm}} b_kc)\cdot y_k)\\
		&=& \sum_{k}(\varphi(\underline{\hspace{0.3cm}} b_kc)\cdot (my_k))\\
		&=& (\imath\otimes\varphi)\sum_{k}(\rho(my_k)(1\otimes b_kc))\\
		&=& (\imath\otimes\varphi(\underline{\hspace{0.3cm}} c))(\rho(m)E(x\otimes a))\\
		&=& (\imath\otimes\varphi(\underline{\hspace{0.3cm}} c))(\rho(m)(x\otimes a)),
	\end{eqnarray*}
	for all $c\in A$, then $(m\otimes 1)E(x\otimes a) = \rho(m)(x\otimes a)$, for all $x\in R$ and $a \in A$. Therefore, $(m\otimes 1)E=\rho(m)$. Similarly, $E(m\otimes 1)=\rho(m)$.
\end{proof}

\begin{defi} Let $(R, \rho, E)$ be a partial $A$-comodule algebra. We say that the partial coaction $\rho$ is \textbf{reduced} if it satisfies $(R\otimes 1)\rho(R)\subseteq (R\otimes A)E$.
\end{defi}

\begin{rem} Notice that in the case of reduced partial coactions, the inclusion above allows the use of the sigma notation because $(y\otimes 1)\rho(x)\in (R\otimes A)E$, for any $x,y\in R$, hence we can write $(y\otimes 1)\rho(x)=yx^{(0)}\otimes x^{(1)}$. Remember that in this notation one can not say that $x^{(0)} \otimes x^{(1)}$ belongs to the algebra $R\otimes A$, but all the term $yx^{(0)} \otimes x^{(1)}\in R\otimes A$. In the case that $R$ has local units, then one could think about the sigma notation for the coaction as in the case of the comultiplication.
\end{rem}

\begin{pro} If $(R,\rho, E)$ is a reduced partial coaction, then $\rho(R)(R\otimes 1)\subseteq E(R\otimes A)$.
\end{pro}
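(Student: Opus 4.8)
The plan is to reduce the inclusion to the single membership statement $\rho(x)(y\otimes 1)\in R\otimes A$ for all $x,y\in R$, and then to establish this by covering the free tensor leg with the antipode, imitating (on the opposite side) the computation of Proposition \ref{pro_bijetivcoacao}.

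First I would dispose of the idempotent. By Lemma \ref{rhoigual_rhoe} we have $E\rho(x)=\rho(x)$, so
\begin{equation*}
E\big(\rho(x)(y\otimes 1)\big)=\big(E\rho(x)\big)(y\otimes 1)=\rho(x)(y\otimes 1).
\end{equation*}
Consequently, once I know that $w:=\rho(x)(y\otimes 1)$ lies in $R\otimes A$, the equality $w=Ew$ exhibits $w$ as an element of $E(R\otimes A)$, and the desired inclusion $\rho(R)(R\otimes 1)\subseteq E(R\otimes A)$ follows at once. Thus everything collapses to the partial analogue of the statement on reduced comodule algebras from the global case, namely $\rho(R)(R\otimes 1)\subseteq R\otimes A$.

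For this core claim I would argue exactly as in Proposition \ref{pro_bijetivcoacao}, but with the roles of the two tensor factors interchanged. Fix $x,y\in R$. For every $a\in A$ the element $(1\otimes a)\rho(x)$ already belongs to $R\otimes A$ by Definition \ref{def_comoalgparcarcial_multip}(ii), hence so does $(1\otimes a)\rho(x)(y\otimes 1)$. The strategy is to construct, out of $\rho(x)$, $\rho(y)$ and the (bijective) antipode, a single element $w\in R\otimes A$ satisfying $(1\otimes a)\rho(x)(y\otimes 1)=(1\otimes a)\,w$ for all $a\in A$; stripping the arbitrary covering $(1\otimes a)$ by nondegeneracy of the product --- precisely the final step used in Proposition \ref{pro_bijetivcoacao} --- then yields $\rho(x)(y\otimes 1)=w\in R\otimes A$. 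To produce $w$ I would choose $d\in A$ with $\varepsilon(d)=1$, replace the counit via $(\imath\otimes\varepsilon)\rho(x)=x$ (Proposition \ref{pro_coavarepsi}), insert a coproduct and contract it against the antipode through $m(S\otimes\imath)$, and finally reassemble using the coassociativity rewritings \eqref{escritacoa1}--\eqref{escritacoa2}; here the reduced hypothesis $(R\otimes 1)\rho(R)\subseteq(R\otimes A)E$ is what guarantees that the intermediate left-covered expressions remain inside $R\otimes A$.

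The delicate point --- and the reason the reduced hypothesis cannot be dropped --- is the coverage of the free second tensor leg introduced by $(y\otimes 1)$: since $y\otimes 1$ acts trivially on the $A$-component, right multiplication of $\rho(x)$ by it leaves $x^{(1)}$ uncovered, so membership in $R\otimes A$ is not automatic. This is exactly where regularity of $A$ enters: the availability of $S^{-1}$ lets me fold $x^{(1)}$ back into the $A$-leg through the antipode and thereby cover it, while the reduced condition plays, on the first factor, the role that Definition \ref{def_comoalgparcarcial_multip}(ii) played on the second factor in Proposition \ref{pro_bijetivcoacao}. Once $w\in R\otimes A$ is obtained, the reduction of the second paragraph completes the argument.
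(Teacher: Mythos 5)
Your proposal is correct and takes essentially the same route as the paper: the paper covers $\rho(x)(y\otimes 1)$ with $(1\otimes a)$ on the left and $(1\otimes S(b))$ on the right, inserts the counit on $\rho(y)$ (the point where the reduced hypothesis $(R\otimes 1)\rho(R)\subseteq (R\otimes A)E$ is used to make the left-covered $y^{(0)}\otimes y^{(1)}$ legitimate), applies the antipode axiom and \eqref{escritacoa2}, and strips the covers by nondegeneracy to obtain the explicit identity $\rho(x)(y\otimes 1)=\rho(xy^{(0)})(1\otimes S(y^{(1)}))$, which lies in $\rho(R)(1\otimes A)=E(R\otimes A)$ by Proposition \ref{pro_bijetivcoacao}. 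Your preliminary reduction via $E\rho(x)=\rho(x)$ is a harmless repackaging (the paper skips it because its explicit formula already lands in $E(R\otimes A)$), and the only slips in your sketch are cosmetic: the counit is inserted on $\rho(y)$ rather than $\rho(x)$, and it is $y^{(1)}$, not $x^{(1)}$, that gets folded through the antipode into the second tensor leg.
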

\begin{proof}
	Indeed, let $x,y\in R$, $a,b\in A$ and write $(1\otimes a)\rho(x)=\sum_i x_iy\otimes a_i$, then
		\begin{eqnarray*}
		&& \hspace{-2cm}(1\otimes a)\rho(x)(y\otimes 1)(1\otimes S(b))=\\
		&=& \sum_i x_iy\otimes a_iS(b)\\
		&=&  \sum_i x_iy^{(0)} \otimes a_iS(b_{(1)})\varepsilon(b_{(2)}y^{(1)})\\
		&=& \sum_i x_iy^{(0)} \otimes m(\imath\otimes S)((a_iS(b_{(1)})\otimes 1)\Delta(b_{(2)}y^{(1)}))\\
		&=& (\imath\otimes m(\imath\otimes S))(\sum_i(x_i\otimes a_iS(b_{(1)})\otimes 1)(\imath\otimes \Delta)((1\otimes b_{(2)})\rho(y)))\\
		&=& (\imath\otimes m(\imath\otimes S))((\sum_ix_i\otimes (a_iS(b_{(1)})\otimes 1)\Delta(b_{(2)}))(\imath\otimes\Delta)(\rho(y)))\\
		&=& (\imath\otimes m(\imath\otimes S))((\sum_ix_i\otimes a_i\otimes b)(\imath\otimes\Delta)(\rho(y)))\\
		&=& (\imath\otimes m(\imath\otimes S))(((1\otimes a)\rho(x)\otimes b)(\imath\otimes\Delta)(\rho(y)))\\
		&\stackrel{\ref{rhoigual_rhoe}}{=}& (\imath\otimes m(\imath\otimes S))(((1\otimes a)\rho(x)E\otimes b)(\imath\otimes\Delta)(\rho(y)))\\
		&=& (\imath\otimes m(\imath\otimes S))(((1\otimes a)\rho(x)\otimes 1)(E\otimes b)(\imath\otimes\Delta)(\rho(y)))\\
		&\stackrel{(\ref{escritacoa2})}{=}& (\imath\otimes m(\imath\otimes S))((1\otimes a\otimes 1)(\rho(x)\otimes 1)(\rho\otimes \imath)((1\otimes b)\rho(y)))\\
		&=& (\imath\otimes m(\imath\otimes S))((1\otimes a)\rho(xy^{(0}))\otimes by^{(1)})\\
		&=&(1\otimes a)\rho(xy^{(0)})(1\otimes S(y^{(1)}))(1\otimes S(b)),
	\end{eqnarray*}
	for all $a,b\in A$, thus  $\rho(x)(y\otimes 1)=\rho(xy^{(0)})(1\otimes S(y^{(1)}))\in E(R\otimes A)$, by definition of a partial coaction.
\end{proof}

\begin{rem} If $(R, \rho, E)$ is a reduced symmetric partial $A$-comodule algebra, then we can define the following linear map
	\begin{eqnarray*}
		\beta: R\otimes_{R^{\underline{coA}}} R &\longrightarrow & (R\otimes A)E\\
		x\otimes y &\longmapsto & (x\otimes 1)\rho(y).
	\end{eqnarray*}
\end{rem}

\begin{exa} If $R$ is a reduced $A$-comodule algebra and $f$ is a central idempotent in $R$, then $L=fR$ is a reduced symmetric partial $A$-comodule algebra, by Proposition \ref{pro_coainduzido}.
\end{exa}

\begin{exa} Consider Proposition \ref{coalambda}. If $m\in A$, then $(R, \rho, E)$ is a reduced partial $A$-comodule algebra. 
\end{exa}

The following results will be useful in the next section.

\begin{lem} Let $(R,\rho,E)$ be a reduced symmetric partial $A$-comodule algebra. Then
	\begin{eqnarray*}
		(\rho\otimes \imath)(\rho(x)(y\otimes 1))(1\otimes a\otimes 1)=\sum_j(\imath\otimes\Delta)(\rho(x)(z_j\otimes 1))(1\otimes b_j\otimes 1),
	\end{eqnarray*}
	where $\rho(y)(1\otimes a)=\sum_jz_j\otimes b_j$, for all $x,y\in R$ and $a\in A$.
	\label{escri_estr1}
\end{lem}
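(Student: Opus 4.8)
The plan is to read this identity as the reduced, symmetric analogue of comodule coassociativity, and to extract it from the symmetric companion of the Proposition proved in Subsection~3.2, namely $(\rho\otimes\imath)((1\otimes b)\rho(x)(y\otimes 1))=(1\otimes 1\otimes b)(\imath\otimes\Delta)(\rho(x))(\rho(y)\otimes 1)$, by afterwards stripping off the auxiliary covering element $b$. First I would record that the hypotheses make every expression genuine: since $\rho$ is reduced, the Proposition immediately preceding this lemma gives $\rho(x)(y\otimes 1)\in E(R\otimes A)\subseteq R\otimes A$, so that $(\rho\otimes\imath)(\rho(x)(y\otimes 1))$ and hence (after multiplying by $1\otimes a\otimes 1$ and using axiom~(i) of Definition~\ref{def_comoalgparcarcial_multip}) the whole left-hand side is an element of $R\otimes A\otimes A$. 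Likewise $\rho(y)(1\otimes a)=\sum_j z_j\otimes b_j\in R\otimes A$ and $\rho(x)(z_j\otimes 1)\in R\otimes A$, while regularity ($\Delta(c)(b\otimes 1)\in A\otimes A$) makes the right-hand side a genuine element of $R\otimes A\otimes A$ as well.

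Next I would multiply the cited symmetric identity on the right by $(1\otimes a\otimes 1)$. On its right-hand side one has $(\rho(y)\otimes 1)(1\otimes a\otimes 1)=\rho(y)(1\otimes a)\otimes 1=\sum_j z_j\otimes b_j\otimes 1$, so that side becomes $(1\otimes 1\otimes b)\sum_j(\imath\otimes\Delta)(\rho(x))(z_j\otimes b_j\otimes 1)$. Using $(z_j\otimes b_j\otimes 1)=(z_j\otimes 1\otimes 1)(1\otimes b_j\otimes 1)$ together with the fact that $\imath\otimes\Delta$ acts only on the $A$-legs and therefore commutes with right multiplication by $z_j$ in the untouched $R$-leg, I would rewrite $(\imath\otimes\Delta)(\rho(x))(z_j\otimes b_j\otimes 1)=(\imath\otimes\Delta)(\rho(x)(z_j\otimes 1))(1\otimes b_j\otimes 1)$, which is exactly the $j$-th summand of the lemma's right-hand side. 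On the left-hand side of the cited identity, since $b$ occupies the $A$-leg preserved by $\imath$, one has $(\rho\otimes\imath)((1\otimes b)\rho(x)(y\otimes 1))=(1\otimes 1\otimes b)(\rho\otimes\imath)(\rho(x)(y\otimes 1))$, so after multiplication that side is $(1\otimes 1\otimes b)$ times the lemma's left-hand side.

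At this stage both sides of the desired equality, premultiplied by $(1\otimes 1\otimes b)$, coincide for every $b\in A$. Since both sides are genuine elements of $R\otimes A\otimes A$, I would invoke the existence of local units in $A$ to choose $e\in A$ acting as a unit on every third-leg component appearing in the two (finite) expressions; premultiplication by $(1\otimes 1\otimes e)$ returns each side unchanged, and the identity specialized to $b=e$ then yields the claim.

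The main difficulty is bookkeeping rather than conceptual: one must track scrupulously which of the three tensor legs each factor acts on and verify at each step that expressions land in $R\otimes A$ (resp. $R\otimes A\otimes A$) rather than merely in a multiplier algebra. The genuinely delicate point is that the idempotent $E$ must be absorbed on the correct side, and this is precisely where symmetry is indispensable: working through the factorization $(\rho\otimes\imath)(\rho(x)(y\otimes 1))=(\rho\otimes\imath)(\rho(x))(\rho(y)\otimes 1)$ one applies condition~(iv) of Definition~\ref{def_comoalgparcarcial_multip} to get $(\imath\otimes\Delta)(\rho(x))(E\otimes 1)(\rho(y)\otimes 1)$ and then clears $E$ via $(E\otimes 1)(\rho(y)\otimes 1)=(\rho(y)\otimes 1)$ from Lemma~\ref{rhoigual_rhoe}; the plain coassociativity~(iii) would instead leave a stray $(E\otimes 1)$ trapped between $(\imath\otimes\Delta)(\rho(x))$ and $(\rho(y)\otimes 1)$ that cannot be removed. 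I prefer the covering route above because it reduces everything to the already-established symmetric Proposition, relegating this $E$-absorption to a result proved earlier.
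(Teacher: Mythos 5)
Your argument is correct and is essentially the proof in the paper: both cover the free third tensor leg with an element of $A$, invoke the symmetric form of partial coassociativity (condition (iv), i.e.\ \eqref{escritacoa3}/\eqref{escritacoa4}), absorb the idempotent via $E\rho(y)=\rho(y)$ from Lemma \ref{rhoigual_rhoe}, and then strip the covering element. The only difference is cosmetic --- the paper right-covers the third leg, computing $(\rho\otimes\imath)(\rho(x)(y\otimes1))(1\otimes a\otimes c)$ and applying \eqref{escritacoa3} directly, whereas you left-cover with $(1\otimes1\otimes b)$ and route through the displayed symmetric companion of the preceding Proposition.
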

\begin{proof}
	Let $x,y\in R$ and $a\in A$. Then
	\begin{eqnarray*}
		(\rho\otimes \imath)(\rho(x)(y\otimes 1))(1\otimes a\otimes c)&=& (\rho\otimes \imath)(\rho(x)(1\otimes c)(y\otimes 1))(1\otimes a\otimes 1)\\
		&=& (\rho\otimes \imath)(\rho(x)(1\otimes c))(\rho(y)(1\otimes a)\otimes 1)\\
		&\stackrel{(\ref{escritacoa3})}{=}&(\imath\otimes\Delta)(\rho(x))(E\otimes 1)(1\otimes 1\otimes c)(\rho(y)(1\otimes a)\otimes 1)\\
		&=& (\imath\otimes\Delta)(\rho(x))(\rho(y)(1\otimes a)\otimes c)\\
		&=& (\imath\otimes\Delta)(\rho(x))(\sum_jz_j\otimes b_j\otimes c)\\
		&=&\sum_j(\imath\otimes\Delta)(\rho(x)(z_j\otimes 1))(1\otimes b_j\otimes c).
	\end{eqnarray*}
\end{proof}

\begin{lem} If $(R,\rho,E)$ is a reduced symmetric partial $A$-comodule algebra then
	\begin{center}
		$(\rho\otimes\imath)((x\otimes 1)\rho(y))=(\rho(x)\otimes 1)(\imath\otimes\Delta)(\rho(y)),$
	\end{center}
	for all $x,y\in R$.
	\label{reduzi}
\end{lem}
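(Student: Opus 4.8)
The plan is to verify the asserted equality after covering the last tensor leg by an arbitrary $a\in A$, so that every expression involved becomes a genuine element of $R\otimes A$ (rather than an abstract multiplier) on which $\rho\otimes\imath$ is unambiguously defined. First I would observe that the reduced hypothesis is exactly what makes the left-hand side meaningful: since $(R\otimes 1)\rho(R)\subseteq(R\otimes A)E\subseteq R\otimes A$, the element $(x\otimes 1)\rho(y)$ lies in $R\otimes A$, whence $(\rho\otimes\imath)((x\otimes 1)\rho(y))$ is a well-defined element of $M(R\otimes A\otimes A)$.

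Next, for a fixed $a\in A$ I would push the covering through $\rho\otimes\imath$. Writing $(x\otimes 1)\rho(y)=\sum_k r_k\otimes s_k\in R\otimes A$, one has $(\rho\otimes\imath)((x\otimes 1)\rho(y))(1\otimes 1\otimes a)=(\rho\otimes\imath)((x\otimes 1)\rho(y)(1\otimes a))$, and since $\rho(y)(1\otimes a)\in R\otimes A$ by Definition \ref{def_comoalgparcarcial_multip}(ii), the right-hand side is computed with honest elements. Using that $\rho$ is an algebra homomorphism,
\begin{eqnarray*}
(\rho\otimes\imath)((x\otimes 1)\rho(y)(1\otimes a)) &=& (\rho(x)\otimes 1)(\rho\otimes\imath)(\rho(y)(1\otimes a)).
\end{eqnarray*}
Then, invoking the defining relation \eqref{escritacoa1} and $\rho(x)E=\rho(x)$ from Lemma \ref{rhoigual_rhoe},
\begin{eqnarray*}
(\rho(x)\otimes 1)(\rho\otimes\imath)(\rho(y)(1\otimes a)) &=& (\rho(x)\otimes 1)(E\otimes 1)(\imath\otimes\Delta)(\rho(y))(1\otimes 1\otimes a)\\
&=& (\rho(x)\otimes 1)(\imath\otimes\Delta)(\rho(y))(1\otimes 1\otimes a).
\end{eqnarray*}
Thus both sides of the claimed identity agree after right multiplication by $1\otimes 1\otimes a$, for every $a\in A$.

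Finally, I would remove the covering factor. Writing $T$ for the difference of the two multipliers in $M(R\otimes A\otimes A)$, the previous step gives $T(1\otimes 1\otimes a)=0$ for all $a$; then for any $p\in R$ and $q,a\in A$ the identity $(1\otimes 1\otimes a)(p\otimes q\otimes 1)=p\otimes q\otimes a$ together with associativity of the multiplier action yields $T(p\otimes q\otimes a)=\big(T(1\otimes 1\otimes a)\big)(p\otimes q\otimes 1)=0$, whence $T=0$ by nondegeneracy of $R\otimes A\otimes A$.

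The only genuinely delicate point is the multiplier bookkeeping: one must be sure that $(x\otimes 1)\rho(y)$ sits in $R\otimes A$ (this is precisely the role of the reduced hypothesis) before $\rho\otimes\imath$ is applied, and that the cancellation of $1\otimes 1\otimes a$ is justified by the nondegeneracy argument above rather than by naive cancellation in $M(R\otimes A\otimes A)$. Everything else is a direct application of the homomorphism property of $\rho$, relation \eqref{escritacoa1}, and Lemma \ref{rhoigual_rhoe}; in particular the symmetry hypothesis enters only through the reduced framework in which the statement is placed.
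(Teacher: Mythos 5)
Your proposal is correct and follows essentially the same route as the paper: cover the last tensor leg with $1\otimes 1\otimes a$, use the homomorphism property of $\rho$ together with \eqref{escritacoa1}, and absorb $E$ via Lemma \ref{rhoigual_rhoe}. The extra care you take with the multiplier bookkeeping and the cancellation of the covering factor is sound and only makes explicit what the paper leaves implicit.
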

\begin{proof}
	Let $x,y\in R$ and $a\in A$,
	\begin{eqnarray*}
		(\rho\otimes\imath)((x\otimes 1)\rho(y))(1\otimes 1\otimes a)&=&(\rho\otimes \imath)((x\otimes 1)\rho(y)(1\otimes a))\\
		&=&(\rho(x)\otimes 1)(E\otimes 1)(\imath\otimes\Delta)(\rho(y))(1\otimes 1\otimes a)\\
		&=&(\rho(x)\otimes 1)(\imath\otimes\Delta)(\rho(y))(1\otimes 1\otimes a).
	\end{eqnarray*}
\end{proof}

\subsection{A Morita Context}
Our aim in this section is to construct a Morita context relating the coinvariant algebra $R^{\underline{coA}}$ and a subalgebra of the smash product $R\# A$, in the following situation:
$A$ is a regular multiplier Hopf algebra with integrals, $\widehat{A}$ is its dual algebra, $R$ is an algebra with a nondegenerate product such that $R^2=R$, and  $(R, \rho, E)$ a reduced symmetric partial $A$-comodule.

\vu
 
In \cite{Frame} it is shown the existence of a unique invertible element $\delta\in M(A)$ such that
\begin{equation}\label{grouplike}
	(\varphi\otimes\imath)\Delta(a)=\varphi(a)\delta,
\end{equation}
for all $a\in A$, whose the inverse is given by $S(\delta)$, and $\Delta(\delta)=\delta\otimes\delta$. We denote $\widehat{a}=\varphi(\underline{\hspace{0.3cm}} a)$ and  $\widehat{a}^{\delta}=\varphi(\underline{\hspace{0.3cm}} \delta a)$.

\begin{defi}
	Let $(R, \rho, E)$ be a partial $A$-comodule algebra and $$\Omega = \{(\imath \otimes \varphi)(\rho(x)(1 \otimes a))\ ;\ x\in R, a\in A \}\subseteq R.$$ We say that $\rho$ is a \emph{restrict} partial coaction if it is reduced and there is $a\in A$ such that
	$(\imath \otimes \varphi)(E(1 \otimes a)) = 1_{M(R)}|_{\Omega}.$
\end{defi}

Observe that if $R$ is a restrict symmetric partial $A$-comodule algebra, then  $R$ is a symmetric partial $\widehat{A}$-module algebra and the restriction assumption implies that the product in $\widehat{A} \cdot R$ is nondegenerate. Hence, the symmetric partial action of $\widehat{A}$ on $R$ can be extended to a linear map of $\widehat{A}$ on $M(R)$. The extension of the partial action of $\widehat{A}$ on $M(R)$ is fundamental for the construction of the algebras that will appear in the sequel.

\begin{exa}
	Consider the partial coaction given by Example \ref{ex_concretstrito} and take $a=\delta_q\in A_G$, with $q\in N$. Then $L$ is a restrict symmetric partial $A_G$-comodule algebra.
\end{exa}

\begin{exa} Consider the partial coaction given by Example \ref{ex_mlambda} and take $a=\delta_q\in A_G$, with $q\in N$. Then $R$ is a restrict symmetric partial $A_G$-comodule algebra.
\end{exa}

\begin{pro} If $(R,\rho,E)$ is a restrict symmetric partial $A$-comodule algebra, then
	\begin{enumerate}
		\item[(i)] $(\widehat{A}\cdot R)\# \widehat{A}$ is a subalgebra of $R\# \widehat{A}$;
		\item[(ii)] $R^{\underline{coA}}\subseteq R^{\underline{\widehat{A}}}$.
	\end{enumerate}
	\label{pro_algcontMorit}
\end{pro}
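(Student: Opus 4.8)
The plan is to transport the whole question into the language of the partial $\widehat{A}$-action. By Proposition \ref{pro_dualocoacao}, the restrict symmetric partial $A$-comodule structure on $R$ yields a symmetric partial $\widehat{A}$-module algebra $(R,\cdot,\e)$, where $\varphi(\underline{\hspace{0.3cm}} a)\cdot x = (\imath\otimes\varphi)(\rho(x)(1\otimes a))$. The restrict hypothesis is exactly what guarantees that $\widehat{A}\cdot R$ carries a nondegenerate product, so the $\widehat{A}$-action extends to $M(R)$ as in Lemma \ref{lema_extparcial} and the invariant algebra $R^{\underline{\widehat{A}}}$ of Definition \ref{def_invaparcial} is meaningful. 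Both the smash product $R\#\widehat{A}$ of Definition \ref{def_smash} and $R^{\underline{\widehat{A}}}$ are then built from this single $\widehat{A}$-module structure.

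For (i), I would first check that $\widehat{A}\cdot R$ is a subalgebra of $R$: for spanning elements $w\cdot x,\,u\cdot y\in\widehat{A}\cdot R$, the Lemma giving $(a\cdot x)(b\cdot y)=a_{(1)}\cdot(x(S(a_{(2)})b\cdot y))$ shows their product equals $w_{(1)}\cdot(x(S(w_{(2)})u\cdot y))\in\widehat{A}\cdot R$, and closure under products follows by bilinearity. Identifying $(\widehat{A}\cdot R)\#\widehat{A}$ with the subspace $(\widehat{A}\cdot R)\otimes\widehat{A}$ of $R\otimes\widehat{A}=R\#\widehat{A}$, I would then compute, for $x,y\in\widehat{A}\cdot R$ and $w,u\in\widehat{A}$,
\begin{eqnarray*}
(x\# w)(y\# u)=x(w_{(1)}\cdot y)\# w_{(2)}u .
\end{eqnarray*}
Since $\widehat{\Delta}(w)(1\otimes u)=w_{(1)}\otimes w_{(2)}u\in\widehat{A}\otimes\widehat{A}$, the factor $w_{(1)}\cdot y$ lies in $\widehat{A}\cdot R$; as $x\in\widehat{A}\cdot R$ and the latter is a subalgebra, $x(w_{(1)}\cdot y)\in\widehat{A}\cdot R$, while $w_{(2)}u\in\widehat{A}$. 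Hence the product stays in $(\widehat{A}\cdot R)\#\widehat{A}$, which is therefore a subalgebra of $R\#\widehat{A}$.

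For (ii), the point is that the two algebras admit matching characterizations, so no real computation is needed. By Proposition \ref{carac_coinv},
\begin{eqnarray*}
R^{\underline{coA}}=\{m\in M(R);\ w\cdot(mx)=m(w\cdot x)\ \text{and}\ w\cdot(xm)=(w\cdot x)m,\ \forall\, x\in R,\ w\in\widehat{A}\}.
\end{eqnarray*}
These are precisely the defining conditions of the set appearing in item (i) of Proposition \ref{carac_invari}, read for the partial $\widehat{A}$-action; that proposition asserts this set is contained in $R^{\underline{\widehat{A}}}$. Therefore $R^{\underline{coA}}\subseteq R^{\underline{\widehat{A}}}$.

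I expect the only genuine subtlety to be bookkeeping rather than a hard step: one must ensure the Sweedler leg $w_{(1)}$ is covered by $u$ so that $w_{(1)}\cdot y$ is a legitimate finite sum, and one must invoke the restrict hypothesis at the very start to license the extension of the $\widehat{A}$-action to $M(R)$, without which $R^{\underline{\widehat{A}}}$ would not even be defined. Once these well-definedness issues are secured, (i) reduces to the one-line closure computation above and (ii) is an immediate comparison of the characterizations in Propositions \ref{carac_coinv} and \ref{carac_invari}.
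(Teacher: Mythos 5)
Your proof is correct and follows essentially the same route as the paper: both reduce everything to the dual partial $\widehat{A}$-action, verify that $\widehat{A}\cdot R$ is closed under multiplication and under the action so that the smash product restricts, and obtain (ii) by matching the characterization of $R^{\underline{coA}}$ in Proposition \ref{carac_coinv} against item (i) of Proposition \ref{carac_invari}. The only (immaterial) difference is that the paper gets closure of $\widehat{A}\cdot R$ from the identity $\e(\widehat{A})R=\widehat{A}\cdot R=R\e(\widehat{A})$ of Proposition \ref{pro_igualextensaoacao} together with Proposition \ref{pro_exteproacao}, whereas you use the identity $(a\cdot x)(b\cdot y)=a_{(1)}\cdot(x(S(a_{(2)})b\cdot y))$ directly.
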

\begin{proof} 
	
	(i) By Proposition \ref{pro_igualextensaoacao} it follows that $\e(\widehat{A})R=\widehat{A}\cdot R=R\e(\widehat{A})$ is a subalgebra of $R$, thus $\widehat{A}\cdot R$ is a partial $\widehat{A}$-submodule algebra of $R$ because
	\begin{center}
		$\varphi(\underline{\hspace{0.3cm}}a)\cdot (\varphi(\underline{\hspace{0.3cm}} b)\cdot x)\stackrel{\ref{pro_exteproacao}}{=}\e(\varphi(\underline{\hspace{0.3cm}}a)_{(1)})(\varphi(\underline{\hspace{0.3cm}} a)_{(2)}\varphi(\underline{\hspace{0.3cm}} b)\cdot x)\in \e(\widehat{A})R=\widehat{A}\cdot R$,
	\end{center}
	for all $\varphi(\underline{\hspace{0.3cm}}a), \varphi(\underline{\hspace{0.3cm}}b)\in\widehat{A}$ and $x\in R$. Therefore, $(\widehat{A}\cdot R)\# \widehat{A}$ is a subalgebra of $R\# \widehat{A}$.
	
	(ii) This result follows from Propositions \ref{carac_invari} and \ref{carac_coinv}.
\end{proof}

It follows bellow  some useful results for the construction of the Morita context.

\begin{lem}\label{bimodulestructure} Under the above conditions, $\widehat{A}\cdot R$ is a unitary $((\widehat{A}\cdot R)\# \widehat{A}, R^{\underline{coA}})$-bimodule  with the following structure
	\begin{eqnarray*}
		(x\# \widehat{a})\triangleright y&=& x(\widehat{a}\cdot y)\\
		x\triangleleft m&=&xm,
	\end{eqnarray*}
	for all $x,y \in \widehat{A}\cdot R$, $\widehat{a}\in \widehat{A}$ and $m\in R^{\underline{coA}}$.
	\label{bimodu1}
\end{lem}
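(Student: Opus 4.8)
The plan is to verify, in turn, that the two prescribed actions are well defined, that each makes $\widehat{A}\cdot R$ a one-sided module, that the two actions commute, and finally that the resulting bimodule is unitary. First I would record well-definedness. For $y\in\widehat{A}\cdot R$ we have $\widehat{a}\cdot y\in\widehat{A}\cdot R$ by Proposition \ref{pro_algcontMorit}(i), and since $\widehat{A}\cdot R$ is a subalgebra of $R$ the product $x(\widehat{a}\cdot y)$ again lies in $\widehat{A}\cdot R$; hence $\triangleright$ takes values in $\widehat{A}\cdot R$. For the right action, given $m\in R^{\underline{coA}}$ and $x=\sum_i w_i\cdot x_i\in\widehat{A}\cdot R$, Proposition \ref{carac_coinv} yields $xm=\sum_i(w_i\cdot x_i)m=\sum_i w_i\cdot(x_im)\in\widehat{A}\cdot R$, so $\triangleleft$ is well defined as well.

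For the left module axiom I would expand both sides using Definition \ref{def_smash}: on one hand $((x\#\widehat{a})(y\#\widehat{b}))\triangleright z=(x(\widehat{a}_{(1)}\cdot y)\#\widehat{a}_{(2)}\widehat{b})\triangleright z=x(\widehat{a}_{(1)}\cdot y)(\widehat{a}_{(2)}\widehat{b}\cdot z)$, and on the other $(x\#\widehat{a})\triangleright((y\#\widehat{b})\triangleright z)=x\bigl(\widehat{a}\cdot(y(\widehat{b}\cdot z))\bigr)$; these agree by Definition \ref{def_acaoparcialmulti}(i). The right module axiom $(x\triangleleft m)\triangleleft n=x\triangleleft(mn)$ is just associativity of the product of $M(R)$. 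For the bimodule compatibility I would compute $((x\#\widehat{a})\triangleright y)\triangleleft m=x(\widehat{a}\cdot y)m$ and $(x\#\widehat{a})\triangleright(y\triangleleft m)=x(\widehat{a}\cdot(ym))$; these coincide because the characterization in Proposition \ref{carac_coinv} gives exactly $(\widehat{a}\cdot y)m=\widehat{a}\cdot(ym)$ for $m\in R^{\underline{coA}}$.

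The step I expect to be the real obstacle is unitarity, that is, $((\widehat{A}\cdot R)\#\widehat{A})\triangleright(\widehat{A}\cdot R)=\widehat{A}\cdot R$. Since $\widehat{a}\cdot y$ runs over $\widehat{A}\cdot(\widehat{A}\cdot R)$ and $x$ over $\widehat{A}\cdot R$, the left-hand side equals $(\widehat{A}\cdot R)\bigl(\widehat{A}\cdot(\widehat{A}\cdot R)\bigr)$, so it suffices to prove $\widehat{A}\cdot(\widehat{A}\cdot R)=\widehat{A}\cdot R$ together with the idempotency of $\widehat{A}\cdot R$. The first equality is immediate from Definition \ref{def_acaoparcialmulti}(iii), as any $\widehat{a}\cdot r$ equals $\widehat{a}\cdot(\widehat{b}\cdot r)$ for a suitable $\widehat{b}$. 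For idempotency I would exploit the restrict hypothesis, which furnishes $\widehat{a_0}\in\widehat{A}$ such that $u_0:=\e(\widehat{a_0})$ restricts to the identity on $\Omega=\widehat{A}\cdot R$. Using Proposition \ref{pro_bijetivcoacao}, so that $E(x\otimes a_0)\in\rho(R)(1\otimes A)$, one gets $u_0R\subseteq\widehat{A}\cdot R$; combined with $u_0|_{\widehat{A}\cdot R}=\mathrm{id}$ this gives $u_0R=\widehat{A}\cdot R$ and, by nondegeneracy, $u_0^2=u_0$, and symmetrically $Ru_0=\widehat{A}\cdot R$. Then $R^2=R$ yields $(\widehat{A}\cdot R)^2=u_0R\,u_0R=u_0(Ru_0)R=u_0(u_0R)R=u_0^2R^2=u_0R=\widehat{A}\cdot R$, which is the required idempotency and completes the proof of unitarity.

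In summary, the module and compatibility axioms reduce directly to Definition \ref{def_smash}, Definition \ref{def_acaoparcialmulti}(i) and the invariance characterization of Proposition \ref{carac_coinv}, while the genuine difficulty lies entirely in constructing the idempotent $u_0$ from the restrict hypothesis and deducing that $\widehat{A}\cdot R$ is idempotent. The only delicate routine point in the computations will be the bookkeeping of covering elements, ensuring that every Sweedler expression such as $\widehat{a}_{(2)}\widehat{b}\cdot z$ is legitimately covered, exactly as in the justification of the smash product in Definition \ref{def_smash}.
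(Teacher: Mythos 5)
Your verification of the module axioms and of the bimodule compatibility is correct and coincides with the paper's: the left module axiom reduces to Definition \ref{def_acaoparcialmulti}(i) via the smash product formula, and the compatibility $((x\#\widehat{a})\triangleright y)\triangleleft m=(x\#\widehat{a})\triangleright(y\triangleleft m)$ is exactly the identity $(\widehat{a}\cdot y)m=\widehat{a}\cdot(ym)$ from Proposition \ref{carac_coinv}, which is also what the paper invokes. Where you genuinely diverge is the unitarity of the left module. The paper disposes of it in one line, citing $R^2=R$ together with $\e(\widehat{A})R=\widehat{A}\cdot R=R\e(\widehat{A})$ (Proposition \ref{pro_igualextensaoacao}); you instead reduce unitarity to $\widehat{A}\cdot(\widehat{A}\cdot R)=\widehat{A}\cdot R$ (from Definition \ref{def_acaoparcialmulti}(iii)) plus idempotency of $\widehat{A}\cdot R$, and you establish the latter by manufacturing, from the restrict hypothesis, an idempotent $u_0=\e(\widehat{a_0})\in M(R)$ with $u_0R=Ru_0=\widehat{A}\cdot R$, using Proposition \ref{pro_bijetivcoacao} to see that $u_0R\subseteq\widehat{A}\cdot R$. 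Your route is longer but more self-contained: it makes explicit precisely where the restrict hypothesis enters, whereas the paper's one-line appeal to $R^2=R$ leaves the reader to supply the covering argument (writing an element of $\widehat{A}\cdot R$ in the form $x(\widehat{a}\cdot y)$ with $y\in\widehat{A}\cdot R$ requires exactly the kind of manipulation you carry out). The only caveat in your argument is the implicit reading of the restrict condition $(\imath\otimes\varphi)(E(1\otimes a))=1_{M(R)}|_{\Omega}$ as an equality of two-sided multipliers of $\Omega$, which you need for the symmetric statement $Ru_0=\widehat{A}\cdot R$; this is the natural interpretation and is consistent with the paper's use elsewhere, so I do not regard it as a gap. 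A last cosmetic point: you do not explicitly note that the right $R^{\underline{coA}}$-module is unitary, but since $1_{M(R)}\in R^{\underline{coA}}$ this is immediate, as the paper also observes.
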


\begin{proof}
	We begin verifying that $\widehat{A}\cdot R$ is a left unitary $((\widehat{A}\cdot R)\#\widehat{A})$-module. In fact, let $x,y,z\in\widehat{A}\cdot R$ and $\widehat{a},\widehat{b}\in \widehat{A}$,
	\begin{eqnarray*}
		(x\#\widehat{a})\triangleright((y\#\widehat{b})\triangleright z)
		&=& x(\widehat{a}\cdot (y(\widehat{b}\cdot z)))\\
		&=& x(\widehat{a}_{(1)}\cdot y)(\widehat{a}_{(2)}\widehat{b}\cdot z)\\
		&=& (x(\widehat{a}_{(1)}\cdot y)\# \widehat{a}_{(2)}\widehat{b})\triangleright z\\
		&=& ((x\#\widehat{a})(y\#\widehat{b}))\triangleright z.
	\end{eqnarray*}
	
	And, the fact that the module is unitary it follows from  $R^2=R$ and $\e(\widehat{A})R=\widehat{A}\cdot R=R\e(\widehat{A})$. Furthermore, it follows directly from Proposition \ref{carac_coinv} and the fact that $1_{M(R)}\in R^{\underline{coA}}$, that $\widehat{A}\cdot R$ is a right unitary $R^{\underline{coA}}$-module and  
	\begin{eqnarray*}	
((x \# a) \triangleright z ) \triangleleft m &=& (x(\widehat{a} \cdot z))m\\	
&=& (x(\widehat{a} \cdot z)m)\\	
&\stackrel{\ref{carac_coinv}}{=}& x(\widehat{a} \cdot zm)\\
&=&(x \# \widehat{a}) \triangleright (zm)\\
&=&(x \# \widehat{a}) \triangleright (z \triangleleft m).\\
\end{eqnarray*}	
Hence, $\widehat{A}\cdot R$ is a $((\widehat{A}\cdot R)\# \widehat{A}, R^{\underline{coA}})$-bimodule.

\end{proof}

\begin{lem} Under the above conditions, $\widehat{A}\cdot R$ is a unitary  $(R^{\underline{coA}},(\widehat{A}\cdot R)\# \widehat{A})$-bimodule with the following structure
	\begin{eqnarray*}
		x\triangleleft(y\#\widehat{a})&=& S^{-1}(\widehat{a}^{\delta})\cdot(xy)\\
		m\triangleright x&=& mx,
	\end{eqnarray*}
	for all $x,y \in \widehat{A}\cdot R$, $\widehat{a} \in \widehat{A}$ and $m\in R^{\underline{coA}}$.
	\label{bimodu2}
\end{lem}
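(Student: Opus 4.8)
The plan is to mirror the proof of Lemma \ref{bimodu1}, checking in turn that $m\triangleright x=mx$ makes $\widehat{A}\cdot R$ a unitary left $R^{\underline{coA}}$-module, that $x\triangleleft(y\#\widehat{a})=S^{-1}(\widehat{a}^{\delta})\cdot(xy)$ makes it a unitary right $(\widehat{A}\cdot R)\#\widehat{A}$-module, and that the two actions commute. Here $S^{-1}$ is the inverse antipode $\widehat{S}^{-1}$ of $\widehat{A}$ (consistent with the $\widehat{S}$ used in \eqref{escritadelta2}), and the only genuinely new ingredient relative to Lemma \ref{bimodu1} is the twist $\widehat{S}^{-1}(\widehat{a}^{\delta})$, which prevents the right action from aligning with the smash product as neatly as the left action did there.

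First I would dispose of the easy parts. The left action is just multiplication in $M(R)$, so associativity is automatic; since $\rho(1_{M(R)})=E=(1_{M(R)}\otimes 1)E=E(1_{M(R)}\otimes 1)$ by Proposition \ref{pro_extcoapart}, we have $1_{M(R)}\in R^{\underline{coA}}$ and the module is unitary, while writing $x=\sum_i w_i\cdot r_i$ and invoking Proposition \ref{carac_coinv} gives $mx=\sum_i w_i\cdot(mr_i)\in\widehat{A}\cdot R$, so the action is well defined. The bimodule compatibility $(m\triangleright x)\triangleleft(y\#\widehat{a})=m\triangleright(x\triangleleft(y\#\widehat{a}))$ is then immediate from Proposition \ref{carac_coinv}: both sides equal $m\bigl(S^{-1}(\widehat{a}^{\delta})\cdot(xy)\bigr)$, using $(mx)y=m(xy)$ and $S^{-1}(\widehat{a}^{\delta})\cdot\bigl(m(xy)\bigr)=m\bigl(S^{-1}(\widehat{a}^{\delta})\cdot(xy)\bigr)$ for $m\in R^{\underline{coA}}$.

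The crux, and the step I expect to be the main obstacle, is associativity of the right action, $(x\triangleleft(y\#\widehat{a}))\triangleleft(z\#\widehat{b})=x\triangleleft\bigl((y\#\widehat{a})(z\#\widehat{b})\bigr)$. Expanding the smash product by Definition \ref{def_smash} and applying the symmetric partial-action axioms of Definition \ref{def_acaoparcialmulti} for $\widehat{A}$ — axiom (v) on the left-hand side (which is of the shape $u\cdot((w\cdot p)q)$) and axiom (i) on the right-hand side (of the shape $u\cdot(p(w\cdot q))$) — rewrites both members as $\sum(p\cdot(xy))(q\cdot z)$, so by bilinearity of the action the identity reduces to the purely coalgebraic statement in $\widehat{A}\otimes\widehat{A}$
\[
\widehat{S}^{-1}(\widehat{b}^{\delta})_{(1)}\,\widehat{S}^{-1}(\widehat{a}^{\delta})\otimes \widehat{S}^{-1}(\widehat{b}^{\delta})_{(2)}=\widehat{S}^{-1}((\widehat{a}_{(2)}\widehat{b})^{\delta})_{(1)}\otimes \widehat{S}^{-1}((\widehat{a}_{(2)}\widehat{b})^{\delta})_{(2)}\,\widehat{a}_{(1)}.
\]
I would establish this by pairing both sides against an arbitrary $c\otimes d\in A\otimes A$, expressing the coproducts of $\widehat{A}$ through \eqref{escritadelta1}--\eqref{escritadelta2}, using that $\widehat{S}$ is an anti-coalgebra map (so $\widehat{\Delta}\,\widehat{S}^{-1}=(\widehat{S}^{-1}\otimes \widehat{S}^{-1})\,\sigma\,\widehat{\Delta}$), and absorbing the modular factor via $\Delta(\delta)=\delta\otimes\delta$ together with \eqref{grouplike}; the remaining integral manipulations close by left invariance of $\varphi$, exactly as in the coassociativity computation of Proposition \ref{propfeia}. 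The grouplike and primitive cases can be checked by hand as a sanity test and already reveal why the $\delta$-twist is the correct one.

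Finally, unitarity of the right action, $(\widehat{A}\cdot R)\triangleleft\bigl((\widehat{A}\cdot R)\#\widehat{A}\bigr)=\widehat{A}\cdot R$, would follow from $R^2=R$, the idempotency and nondegeneracy of the product of $\widehat{A}\cdot R$, and the covering axiom (iii) of Definition \ref{def_acaoparcialmulti}: since $\widehat{S}^{-1}$ is bijective and $\delta$ invertible one has $\widehat{S}^{-1}(\widehat{A}^{\delta})=\widehat{A}$, whence the image of $\triangleleft$ is $\widehat{A}\cdot\bigl((\widehat{A}\cdot R)(\widehat{A}\cdot R)\bigr)=\widehat{A}\cdot R$. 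Assembling the unitary left module, the unitary right module, and their compatibility then yields the asserted $(R^{\underline{coA}},(\widehat{A}\cdot R)\#\widehat{A})$-bimodule structure.
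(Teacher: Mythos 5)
Your proposal matches the paper's proof in all essentials: the paper likewise treats the left module structure, unitarity and compatibility as routine consequences of Lemma \ref{bimodulestructure} and Proposition \ref{carac_coinv}, and concentrates on the associativity of the right action, which it also bridges with axioms (v) and (i) of Definition \ref{def_acaoparcialmulti}; the only difference is that the paper closes the resulting Sweedler identity by inserting $S^{-1}((\widehat{a}^{\delta})_{(2)})(\widehat{a}^{\delta})_{(1)}$ and cancelling directly inside the action computation, rather than isolating your identity in $\widehat{A}\otimes\widehat{A}$ and verifying it by pairing against $A\otimes A$ with the integral. Both routes rest on the same antipode cancellation, so this is a difference of presentation, not of method.
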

\begin{proof}
	It is enough to check that $\widehat{A}\cdot R$ is a right $((\widehat{A}\cdot R)\# \widehat{A})$-module,  the other statements follow in a similar way from Lemma \ref{bimodulestructure}. Indeed, let $x\in\widehat{A}\cdot R$ and $y\#\widehat{a},z\#\widehat{b} \in (\widehat{A}\cdot R)\#\widehat{A}$,
	\begin{eqnarray*}
		(x\triangleleft (y\#\widehat{a}))\triangleleft (z\#\widehat{b})&=& S^{-1}(\widehat{b}^{\delta})\cdot ((S^{-1}(\widehat{a}^{\delta})\cdot (xy))z)\\
		&\stackrel{\ref{def_acaoparcialmulti}(v)}{=}&(S^{-1}((\widehat{b}^{\delta})_{(2)})S^{-1}(\widehat{a}^{\delta})\cdot (xy))
(S^{-1}((\widehat{b}^{\delta})_{(1)})\cdot z)\\
		&=& (S^{-1}(\widehat{a}^{\delta}(\widehat{b}^{\delta})_{(2)})\cdot (xy))(S^{-1}((\widehat{b}^{\delta})_{(1)})\cdot z)\\
		&=& (S^{-1}((\widehat{a}^{\delta})_{(3)}(\widehat{b}^{\delta})_{(2)})\cdot (xy))(S^{-1}((\widehat{b}^{\delta})_{(1)})
S^{-1}((\widehat{a}^{\delta})_{(2)})(\widehat{a}^{\delta})_{(1)}\cdot z)\\
		&=& (S^{-1}(((\widehat{a}^{\delta})_{(2)} \widehat{b}^{\delta})_{(2)})\cdot (xy))(S^{-1}(((\widehat{a}^{\delta})_{(2)} 
\widehat{b}^{\delta})_{(1)})(\widehat{a}^{\delta})_{(1)}\cdot z)\\
		&\stackrel{\ref{def_acaoparcialmulti}(i)}{=}& S^{-1}((\widehat{a}^{\delta})_{(2)} \widehat{b}^{\delta})\cdot 
((xy)((\widehat{a}^{\delta})_{(1)}\cdot z))\\
		&=& S^{-1}((\widehat{a}_{(2)} \widehat{b})^{\delta}) \cdot (x(y(\widehat{a}_{(1)}\cdot z)))\\
		&=& x\triangleleft (y(\widehat{a}_{(1)}\cdot z)\# \widehat{a}_{(2)} \widehat{b})\\
		&=& x\triangleleft( (y\#\widehat{a})(z\#\widehat{b})).
	\end{eqnarray*}
\end{proof}

\begin{pro} Let $(R,\rho,E)$ be a restrict symmetric partial coaction. Then the linear map		
	\begin{eqnarray*}
		( \ , \ ): (\widehat{A}\cdot R)\otimes_{ (\widehat{A}\cdot R)\# \widehat{A}} (\widehat{A}\cdot R) & \longrightarrow & {R^{\underline{coA}}}\\
		x\otimes\ y & \longmapsto & (x,y)=(\imath\otimes\varphi)\rho(xy)
	\end{eqnarray*}
	is  ${R^{\underline{coA}}}$-bilinear and satisfies $(x\triangleleft(y\# \widehat{a}),z)=(x,(y\# \widehat{a})\triangleright z)$,
	for all $x, y, z\in \widehat{A}\cdot R$, $ \widehat{a}\in \widehat{A}.$
	\label{apli_parent}
\end{pro}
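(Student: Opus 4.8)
The plan is to prove three things: that $(x,y)=(\imath\otimes\varphi)\rho(xy)$ really takes values in $R^{\underline{coA}}$, that the map is $R^{\underline{coA}}$-bilinear, and that it is balanced over $(\widehat{A}\cdot R)\#\widehat{A}$, i.e. $(x\triangleleft(y\#\widehat{a}),z)=(x,(y\#\widehat{a})\triangleright z)$. Well-definedness of the symbol $(\imath\otimes\varphi)\rho(w)$ for $w\in\widehat{A}\cdot R$ is guaranteed by the restrict hypothesis, which ensures that the partial action of $\widehat{A}$ extends to $M(R)$ and that $\widehat{A}\cdot R$ has a nondegenerate product, so that $(\imath\otimes\varphi)\rho(w)$ is a genuine element of $M(R)$.

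First I would show that $m:=(x,y)\in R^{\underline{coA}}$ by checking the two defining equalities in \eqref{def_coinvparcial}. Writing $w=xy$ and using the extension of $\rho$ to $M(R)$ from Proposition \ref{pro_extcoapart}, one has $\rho(m)=(\imath\otimes\imath\otimes\varphi)((\rho\otimes\imath)\rho(w))$. Applying the co-associativity $(\rho\otimes\imath)\rho(w)=(E\otimes 1)(\imath\otimes\Delta)\rho(w)$ of Definition \ref{def_comoalgparcarcial_multip}(iii) and integrating the last leg, the left-invariance $(\imath\otimes\varphi)\Delta(b)=\varphi(b)1$ collapses the second and third legs and yields $\rho(m)=E(m\otimes 1)$. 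Using instead the symmetric co-associativity $(\rho\otimes\imath)\rho(w)=(\imath\otimes\Delta)\rho(w)(E\otimes 1)$ of Definition \ref{def_comoalgparcarcial_multip}(iv) gives $\rho(m)=(m\otimes 1)E$. Both relations hold because $\rho$ is symmetric, so $m\in R^{\underline{coA}}$.

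Next, $R^{\underline{coA}}$-bilinearity is immediate from the fact that $\rho$ is an algebra homomorphism together with $\rho(m)=(m\otimes 1)E=E(m\otimes 1)$ for $m\in R^{\underline{coA}}$ and the identities $E\rho(R)=\rho(R)=\rho(R)E$ of Lemma \ref{rhoigual_rhoe}. Concretely, $(mx,y)=(\imath\otimes\varphi)(\rho(m)\rho(xy))=(\imath\otimes\varphi)((m\otimes 1)E\rho(xy))=m\,(\imath\otimes\varphi)\rho(xy)=m(x,y)$, and symmetrically $(x,ym)=(\imath\otimes\varphi)(\rho(xy)E(m\otimes 1))=(x,y)m$, using $\rho(m)=E(m\otimes 1)$ and $\rho(xy)E=\rho(xy)$.

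The main work, and the expected obstacle, is the balanced identity. By Lemmas \ref{bimodu2} and \ref{bimodu1} it reduces, with $w=xy$, to proving
\[
(\imath\otimes\varphi)\rho\big((S^{-1}(\widehat{a}^{\delta})\cdot w)z\big)=(\imath\otimes\varphi)\rho\big(w(\widehat{a}\cdot z)\big).
\]
My approach is to expand $\rho$ as a homomorphism on both sides, replace the dual action $\widehat{a}\cdot z=(\imath\otimes\varphi)(\rho(z)(1\otimes a))$ and the term $S^{-1}(\widehat{a}^{\delta})\cdot w$ by their coaction expressions, and then use the co-associativity relations \eqref{escritacoa1}--\eqref{escritacoa4} together with $\rho(w)E=\rho(w)$ to bring each side into a single double application of $\varphi$. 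The two sides then differ only in how the two integrations are entangled across $\Delta$, and reconciling them is exactly where the antipode $S^{-1}$ and the modular element $\delta$ enter: the crux is to apply the left-invariance of $\varphi$ repeatedly, in the same spirit as the computations in Propositions \ref{pro_dualocoacao} and \ref{propfeia} and in Lemma \ref{bimodu2}, invoking the duality identities \eqref{escritadelta1}, \eqref{escritadelta2} and the modular relation \eqref{grouplike} to absorb $\widehat{a}^{\delta}=\varphi(\underline{\hspace{0.3cm}}\delta a)$. The factor $\delta$ is precisely what compensates for the failure of $\varphi$ to be two-sided invariant, so that the two nested integrations match. Once this identity is established, the pairing descends to the tensor product over $(\widehat{A}\cdot R)\#\widehat{A}$, and together with the first two steps this proves that $(\,,\,)$ is a well-defined $R^{\underline{coA}}$-bilinear balanced map.
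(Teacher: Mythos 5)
Your overall architecture is the same as the paper's: show the pairing lands in $R^{\underline{coA}}$, check $R^{\underline{coA}}$-bilinearity, and then prove the balanced identity. The first two parts are essentially correct. For membership in $R^{\underline{coA}}$ you integrate the third leg of the two coassociativity axioms (iii) and (iv) of Definition \ref{def_comoalgparcarcial_multip} directly; the paper instead verifies the multiplier identity $\rho(m)(s\otimes a)=(m\otimes 1)E(s\otimes a)$ element-by-element, writing $E(s\otimes a)=\sum_i\rho(s_i)(1\otimes a_i)$ and invoking Lemma \ref{escri_estr1}. The two arguments are the same in spirit, but note that your formal identity $\rho(m)=(\imath\otimes\imath\otimes\varphi)((\rho\otimes\imath)\rho(w))$ is not automatic: $\rho(m)$ is defined only through the extension of Proposition \ref{pro_extcoapart}, so one must evaluate against $E(s\otimes a)$ and handle the coverings exactly as the paper does; your version should be regarded as shorthand for that computation. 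The bilinearity step is identical to the paper's.

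The genuine gap is the balanced identity. You correctly reduce it to
$(\imath\otimes\varphi)\rho\bigl((S^{-1}(\widehat{a}^{\delta})\cdot w)z\bigr)=(\imath\otimes\varphi)\rho\bigl(w(\widehat{a}\cdot z)\bigr)$ with $w=xy$, and you correctly identify the ingredients (left invariance of $\varphi$, the coassociativity relations, and the modular relation $(\varphi\otimes\imath)\Delta(a)=\varphi(a)\delta$), but you never carry out the computation; everything after ``the crux is to apply the left-invariance of $\varphi$ repeatedly'' is a description of a proof, not a proof. This step is the actual content of the proposition: in the paper it occupies roughly twenty lines, starting by writing $\widehat{a}^{\delta}=\varphi(\underline{\hspace{0.3cm}}\,\delta a)=\varphi(c\,\underline{\hspace{0.3cm}})$ so that $S^{-1}(\widehat{a}^{\delta})=\varphi\circ S^{-1}(\underline{\hspace{0.3cm}}\,S(c))$, then pushing both sides to the common expression $(\imath\otimes\varphi\otimes\varphi)\bigl((\rho(yx)\otimes 1)(\imath\otimes\Delta)(\rho(z)(r\otimes 1))(1\otimes 1\otimes a)\bigr)$ via \eqref{escritacoa1}, \eqref{escritacoa3}, Lemma \ref{rhoigual_rhoe} and a delicate use of \eqref{grouplike} to convert $\varphi(S^{-1}(\cdot)\delta a)$ back into a left-leg integration. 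None of this is mechanical, and the entanglement of the two copies of $\varphi$ across $\Delta$ is precisely where an unwary computation goes wrong. As written, the proposal establishes the easy parts and leaves the hard part as an intention, so it cannot be accepted as a proof of the proposition.
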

\begin{proof} For every $x\in\widehat{A}\cdot R$, define $(id\otimes\varphi)\rho(x)\in M(R)$, as follows,
	\begin{eqnarray*}
		((\imath\otimes\varphi)\rho(x))y&=& (\imath\otimes\varphi)(\rho(x)(y\otimes 1))\\
		y((\imath\otimes\varphi)\rho(x))&=& (\imath\otimes\varphi)((y\otimes 1)\rho(x)),
	\end{eqnarray*}
	for all $y\in R$, thus  $(\imath\otimes\varphi)\rho(x)\in R^{\underline{coA}}$.
	
	Indeed, let $s\otimes a\in R\otimes A$ and write $E(s\otimes a)=\sum\limits_i\rho(s_i)(1\otimes a_i)=\sum\limits_{i,j}r_{ij}\otimes b_{ij}$, 
	\begin{eqnarray*}
		\rho((\imath\otimes\varphi)\rho(x))(s\otimes a)&=& \rho((\imath\otimes\varphi)\rho(x))(\sum\limits_i\rho(s_i)(1\otimes a_i))\\
		&=& \sum_i\rho((\imath\otimes\varphi)(\rho(x)(s_i\otimes 1)))(1\otimes a_i)\\
		&=& \sum_i\rho(x^{(0)}s_i)(1\otimes a_i)\varphi(x^{(1)})\\
		&=& (\imath\otimes\imath\otimes\varphi)(\sum_i(\rho\otimes\imath)(\rho(x)(s_i\otimes 1))(1\otimes a_i\otimes 1))\\
		&\stackrel{\ref{escri_estr1}}{=}& (\imath\otimes\imath\otimes\varphi)(\sum_{i,j}(\imath\otimes\Delta)(\rho(x)(r_{ij}\otimes 1))(1\otimes b_{ij}\otimes 1))\\
		&=& \sum_{i,j} x^{(0)}r_{ij}\otimes b_{ij}\varphi(x^{(1)})\\
		&=& \sum_{i,j} (\imath\otimes\varphi)(\rho(x)(r_{ij}\otimes 1))\otimes b_{ij}\\
		&=& \sum_{i,j}(((\imath\otimes\varphi)\rho(x))r_{ij})\otimes b_{ij}\\
		&=& ((\imath\otimes\varphi)\rho(x)\otimes 1)(\sum_i\rho(s_i)(1\otimes a_i))\\
		&=&((\imath\otimes\varphi)\rho(x)\otimes 1)E(s\otimes a),
	\end{eqnarray*}
	for all $s\otimes a\in R\otimes A$, concluding that the linear map $( \ , \ )$ is well defined.
	
	\vu
	
	To verify that the map $( \ , \ )$ is $((\widehat{A}\cdot R)\#\widehat{A})$-balanced, consider $x,y,z\in\widehat{A}\cdot R$, $\widehat{a}\in\widehat{A}$ and write $\widehat{a}^{\delta}=\varphi(\underline{\hspace{0.3cm}} \delta a)=\varphi(c\ \underline{\hspace{0.3cm}} )$, thus $S^{-1}(\widehat{a}^{\delta})=\varphi\circ S^{-1}(\underline{\hspace{0.3cm}} \ S(c))$ and,
	\begin{eqnarray*}
		& &\hspace{-2.5cm}(y\triangleleft(x\#\widehat{a}),z)(r)=\\
		\hspace{1cm}&=&(S^{-1}(\widehat{a}^{\delta})\cdot(yx),z)(r)\\
		&=& ((yx)^{(0)},z)(r)\varphi\circ S^{-1}((yx)^{(1)}S(c))\\
		&=& ((\imath\otimes\varphi)\rho((xy)^{(0)}z))r\varphi\circ S^{-1}((yx)^{(1)}S(c))\\
		&=& (\imath\otimes\varphi)(\rho((xy)^{(0)}z)(r\otimes 1))\varphi\circ S^{-1}((yx)^{(1)}S(c))\\
		&=& (\imath\otimes\varphi\otimes \varphi\circ S^{-1})((\rho\otimes \imath)(\rho(yx)(1\otimes S(c))(z\otimes 1))(r\otimes 1\otimes 1))\\
		&\stackrel{(\ref{escritacoa3})}{=}& (\imath\otimes\varphi\otimes \varphi\circ S^{-1})((\imath\otimes\Delta)(\rho(yx))(E\otimes 1)(\rho(z)(r\otimes 1)\otimes S(c)))\\
		&=& (\imath\otimes\varphi\otimes \varphi\circ S^{-1})((\imath\otimes\Delta)(\rho(yx))(\rho(z)(r\otimes 1)\otimes S(c)))\\
		&=& (\imath\otimes\varphi\otimes \varphi\circ S^{-1})((\imath\otimes\Delta)(\rho(yx))(\sum_ir_i\otimes d_i\otimes S(c)))\\
		&=& (\imath\otimes\varphi\otimes \varphi\circ S^{-1})(\sum_i(yx)^{(0)}r_i\otimes\Delta((yx)^{(1)})(d_i\otimes S(c)))\\
		&=& \sum_i  (yx)^{(0)}r_i\varphi(((yx)^{(1)})_{(1)}d_i)\varphi(cS^{-1}(((yx)^{(1)})_{(2)}))\\
		&=& \sum_i  (yx)^{(0)}r_i\varphi(((yx)^{(1)})_{(1)}d_i)\varphi(S^{-1}(((yx)^{(1)})_{(2)})\delta a)\\
		&\stackrel{(\ast)}{=}& \sum_i  (yx)^{(0)}r_i\varphi (S^{-1}(((yx)^{(1)})_{(2)}) ((\varphi\otimes \imath )\Delta (((yx)^{(1)})_{(1)}d_i)) a)\\
		&=& \sum_i  (yx)^{(0)}r_i\varphi (S^{-1}(((yx)^{(1)})_{(2)})(\varphi\otimes \imath )(\Delta (((yx)^{(1)})_{(1)}d_i)(1\otimes a)))\\
		&=& \sum_i  (yx)^{(0)}r_i\varphi(((yx)^{(1)})_{(1)}(d_i)_{(1)}) \varphi (S^{-1}(((yx)^{(1)})_{(3)})((yx)^{(1)})_{(2)}(d_i)_{(2)}a)\\
		&=& \sum_i  (yx)^{(0)}r_i\varphi ((yx)^{(1)}(d_i)_{(1)})\varphi ((d_i)_{(2)} a)\\
		&=& (\imath\otimes\varphi\otimes\varphi)(\sum_i (\rho(yx)(r_i\otimes 1)\otimes 1)(1\otimes\Delta(d_i)(1\otimes a))\\
		&=& (\imath\otimes\varphi\otimes\varphi)((\rho(yx)\otimes 1)((\imath\otimes\Delta)(\sum_ir_i\otimes d_i)(1\otimes 1\otimes a)))\\
		&=& (\imath\otimes\varphi\otimes\varphi)((\rho(yx)\otimes 1)((\imath\otimes\Delta)(\rho(z)(r\otimes 1))(1\otimes 1\otimes a))),
	\end{eqnarray*}
where in $(\ast)$ we used $(\varphi\otimes \imath)\Delta(a)=\varphi(a)\delta$. On the other hand,
		\begin{eqnarray*}
		&& \hspace{-2cm}(y, (x\#\widehat{a})\triangleright z)(r)=\\
		&=& ((\imath\otimes\varphi)(\rho(yx(\widehat{a}\cdot z))))(r)\\
		&=& (\imath\otimes\varphi)(\rho(yx(\widehat{a}\cdot z))(r\otimes 1))\\
		&=& (\imath\otimes\varphi)(\rho(yxz^{(0)})(r\otimes 1))\varphi(z^{(1)}a)\\
		&=& (\imath\otimes\varphi\otimes\varphi)((\rho(yx)\otimes 1)(\rho\otimes\imath)(\rho(z)(1\otimes a))(r\otimes 1\otimes 1))\\
		&\stackrel{(\ref{escritacoa1})}{=}&(\imath\otimes\varphi\otimes\varphi)((\rho(yx)\otimes 1)(E\otimes 1)(\imath\otimes\Delta)(\rho(z))(r\otimes 1\otimes a))\\
		&\stackrel{\ref{rhoigual_rhoe}}{=}& (\imath\otimes\varphi\otimes\varphi)((\rho(yx)\otimes 1)((\imath\otimes\Delta)(\rho(z)(r\otimes 1))(1\otimes 1\otimes a))),
	\end{eqnarray*}
	for any $r\in R$, therefore $(y\triangleleft(x\#\widehat{a}),z)=(y, (x\#\widehat{a})\triangleright z)$.
	
	\vu
	
	The bilinearity follows in a natural way because for $x,y\in\widehat{A}\cdot R$ and $m\in R^{\underline{coA}}$, then, we have
	\begin{eqnarray*}
		(m\triangleright x,y)(r)
		&=& (\imath\otimes\varphi)(\rho(m(xy))(r\otimes 1))\\
		&\stackrel{(\ref{def_coinvparcial})}{=}& (\imath\otimes\varphi)((m\otimes 1)E\rho(xy)(r\otimes 1))\\
		&=& m((xy)^{(0)}r)\varphi((xy)^{(1)})\\
		&=& (m(x,y))(r),
	\end{eqnarray*}
	for all $r\in R$.
\end{proof}

\begin{lem}
	Let $(R,\rho,E)$ be a restrict symmetric partial coaction. Then exists a well defined linear map		
	\begin{eqnarray*}
		\theta:(\widehat{A}\cdot R) \otimes_{R^{\underline{coA}}} (\widehat{A}\cdot R)& \longrightarrow & (\widehat{A}\cdot R) \# \widehat{A}\\
		x \otimes y &\longmapsto & \theta(x \otimes y):= xy^{(0)}\# \varphi(y^{(1)}\underline{\hspace{0.3cm}})
	\end{eqnarray*}
satisfying $\theta(x\otimes y)(z\otimes a)= \theta(x\otimes y)(E(z\otimes a))$, for $x,y,z\in \widehat{A}\cdot R$ and $a\in A$.
	\label{aplicteta}
\end{lem}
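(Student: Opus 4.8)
The plan is to establish three things in turn: that the formula for $\theta$ produces an element of $(\widehat{A}\cdot R)\#\widehat{A}$; that $\theta$ is balanced over $R^{\underline{coA}}$, so that it descends to the quotient $(\widehat{A}\cdot R)\otimes_{R^{\underline{coA}}}(\widehat{A}\cdot R)$; and finally the displayed $E$-absorption identity.

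First I would make the sigma notation precise. Since $\rho$ is reduced, $(x\otimes 1)\rho(y)\in(R\otimes A)E\subseteq R\otimes A$, so $xy^{(0)}\otimes y^{(1)}:=(x\otimes 1)\rho(y)$ is a genuine element of $R\otimes A$, and $xy^{(0)}\#\varphi(y^{(1)}\underline{\hspace{0.3cm}})$ is obtained from it by applying $\imath\otimes(q\mapsto\varphi(q\underline{\hspace{0.3cm}}))$. The second leg lands in $\widehat{A}$ because, for a regular multiplier Hopf algebra with integrals, $\widehat{A}$ also admits the right representation $\{\varphi(q\underline{\hspace{0.3cm}}):q\in A\}$ (the convention already used in the proof of Proposition \ref{pro_dualocoacao}). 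For the first leg I would use $\widehat{A}\cdot R=\e(\widehat{A})R$ (Proposition \ref{pro_igualextensaoacao}): writing $x=\sum_l\e(w_l)r_l$ and noting $(r_l\otimes 1)\rho(y)\in R\otimes A$ by reducedness, every first leg of $(x\otimes 1)\rho(y)$ lies in $\e(\widehat{A})R=\widehat{A}\cdot R$. Hence $xy^{(0)}\in\widehat{A}\cdot R$ and $\theta(x\otimes y)\in(\widehat{A}\cdot R)\#\widehat{A}$.

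Next, for balancedness over $R^{\underline{coA}}$, I would extend $\rho$ to the algebra homomorphism $\rho:M(R)\to M(R\otimes A)$ of Proposition \ref{pro_extcoapart}. For $m\in R^{\underline{coA}}$, the defining relation $\rho(m)=(m\otimes 1)E$ of \eqref{def_coinvparcial} together with $E\rho(y)=\rho(y)$ from Lemma \ref{rhoigual_rhoe} gives
\begin{equation*}
\rho(my)=\rho(m)\rho(y)=(m\otimes 1)E\rho(y)=(m\otimes 1)\rho(y).
\end{equation*}
Multiplying on the left by $x\otimes 1$ yields $(x\otimes 1)\rho(my)=(xm\otimes 1)\rho(y)$, that is, $x(my)^{(0)}\otimes(my)^{(1)}=(xm)y^{(0)}\otimes y^{(1)}$ as elements of $R\otimes A$. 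Applying $\imath\otimes(q\mapsto\varphi(q\underline{\hspace{0.3cm}}))$ gives $\theta(x\otimes my)=\theta(xm\otimes y)$, so $\theta$ factors through the balanced tensor product and is well defined.

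Finally, the $E$-absorption identity is a direct consequence of reducedness: since $(x\otimes 1)\rho(y)\in(R\otimes A)E$ and $E$ is idempotent, one has $(x\otimes 1)\rho(y)=(x\otimes 1)\rho(y)E$; therefore, identifying $\theta(x\otimes y)$ with its defining element $(x\otimes 1)\rho(y)$ and using that $E(z\otimes a)\in R\otimes A$ by Proposition \ref{pro_bijetivcoacao},
\begin{equation*}
\theta(x\otimes y)(z\otimes a)=(x\otimes 1)\rho(y)(z\otimes a)=(x\otimes 1)\rho(y)E(z\otimes a)=\theta(x\otimes y)(E(z\otimes a)).
\end{equation*}
The main obstacle is not any single computation — each is short — but rather keeping the multiplier-algebra bookkeeping honest: since $\rho(y)$ a priori lives only in $M(R\otimes A)$, every step in which one writes $xy^{(0)}\otimes y^{(1)}$ or passes the second leg through $\varphi(\,\cdot\,\underline{\hspace{0.3cm}})$ must first be brought inside $R\otimes A$ via the reduced condition, and the three occurrences of $E$ (from $\rho(m)=(m\otimes 1)E$, from Lemma \ref{rhoigual_rhoe}, and from membership in $(R\otimes A)E$) must be tracked precisely so that the cancellations above are legitimate.
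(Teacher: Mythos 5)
Your proof is correct, and while the balancedness argument is essentially the paper's own (both reduce $\theta(x\triangleleft m,y)=\theta(x,m\triangleright y)$ to $\rho(m)=(m\otimes 1)E$ together with $E\rho(y)=\rho(y)$ and multiplicativity of the extended $\rho$), your treatment of the $E$-absorption identity is genuinely different and notably shorter. The paper decomposes $y=\sum_i\widehat{b_i}\cdot y_i$, unpacks $\theta(x\otimes y)(z\otimes a)$ through the coassociativity identity \eqref{escritacoa3} to exhibit an expression of the form $(\imath\otimes\varphi\otimes\varphi)\bigl(\sum_i(x\otimes 1\otimes 1)(\imath\otimes\Delta)(\rho(y_i))(E(z\otimes a)\otimes b_i)\bigr)$, and then invokes $E^2=E$ to see that replacing $z\otimes a$ by $E(z\otimes a)$ changes nothing. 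You instead observe that both evaluations are $(\imath\otimes\varphi)$ applied to $(x\otimes 1)\rho(y)(z\otimes a)$ and $(x\otimes 1)\rho(y)E(z\otimes a)$ respectively, so that the identity collapses to $\rho(y)E=\rho(y)$ (Lemma \ref{rhoigual_rhoe}) and associativity inside $M(R\otimes A)$; this is legitimate precisely because reducedness puts $(x\otimes 1)\rho(y)$ in $R\otimes A$, so the passage to $xy^{(0)}\#\varphi(y^{(1)}\underline{\hspace{0.3cm}})$ is the application of a genuine linear map $\imath\otimes(q\mapsto\varphi(q\,\underline{\hspace{0.3cm}}))$ to a finite tensor, with no covering problems. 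What your route buys is the avoidance of the Sweedler-notation manipulations with $\widehat{b_i}\cdot y_i$ and of \eqref{escritacoa3}; what the paper's route buys is an explicit formula that it reuses verbatim for the $E(z\otimes a)$ side. Your additional verification that $xy^{(0)}$ lands in $\widehat{A}\cdot R=\e(\widehat{A})R$ (via Proposition \ref{pro_igualextensaoacao}) is a point the paper leaves implicit, and is welcome. One microscopic correction: the containment $E(z\otimes a)\in R\otimes A$ is most directly condition (i) of Definition \ref{def_comoalgparcarcial_multip} (since $E(z\otimes a)=E(1\otimes a)(z\otimes 1)\in(M(R)\otimes A)(R\otimes 1)$) rather than Proposition \ref{pro_bijetivcoacao}, though the latter also yields it.
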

\begin{proof}
	Let $x,y\in \widehat{A}\cdot R$ and $m\in R^{\underline{coA}}$. Then
	\begin{eqnarray*}
		\theta(x\triangleleft m, y)	&=& (xm)y^{(0)}\# \varphi(y^{(1)} \underline{\hspace{0.3cm}})\\
		&=& (\imath\otimes\varphi)((x\otimes 1)(m\otimes 1)E\rho(y)(1\otimes \underline{\hspace{0.3cm}}))\\
		&\stackrel{(\ref{def_coinvparcial})}{=}& (\imath\otimes\varphi)((x\otimes 1)\rho(m)\rho(y)(1\otimes \underline{\hspace{0.3cm}}))\\
		&=& x(my)^{(0)}\# \varphi((my)^{(1)}\underline{\hspace{0.3cm}} )\\
		&=& \theta(x, m\triangleright y),
	\end{eqnarray*}
	which means that is $R^{\underline{coA}}$-balanced. And, given $x,y,z\in \widehat{A}\cdot R$, where $y=\sum_i\widehat{b_i}\cdot y_i$ and $a\in A$, we obtain
	\begin{eqnarray*}
		\theta(x\otimes y)(z\otimes a)&=& \theta(x\otimes \sum_i\widehat{b_i}\cdot y_i)(z\otimes a)\\
		&=& \sum_i \theta(x\otimes (y_i)^{(0)})\varphi((y_i)^{(1)}b_i)(z\otimes a)\\
		&=&\sum_i(x(y_i)^{(0)(0)}\#\varphi((y_i)^{(0)(1)}\underline{\hspace{0.3cm}} )\varphi((y_i)^{(1)}b_i))(z\otimes a)\\
		&=& (\imath\otimes\varphi\otimes\varphi)(\sum_i x(y_i)^{(0)(0)}z\otimes (y_i)^{(0)(1)}a\otimes (y_i)^{(1)}b_i)\\
		&=&  (\imath\otimes\varphi\otimes\varphi)(\sum_i(x\otimes 1\otimes 1)(\rho\otimes \imath)(\rho(y_i)(1\otimes b_i))(z\otimes a\otimes 1))\\
		&\stackrel{(\ref{escritacoa3})}{=}&  (\imath\otimes\varphi\otimes\varphi)(\sum_i(x\otimes 1\otimes 1)(\imath\otimes\Delta)(\rho(y_i))(E(z\otimes a)\otimes b_i)).\\
	\end{eqnarray*}
	
	On the other hand, repeating the above process, we have
	
	\begin{eqnarray*}
		\theta(x\otimes y)(E(z\otimes a))&=& \theta(x\otimes \sum_i\widehat{b_i}\cdot y_i)(\sum_j z_j\otimes c_j)\\
		&=& (\imath\otimes\varphi\otimes\varphi)(\sum_i(x\otimes 1\otimes 1)(\imath\otimes\Delta)(\rho(y_i))(E(\sum_j z_j\otimes c_j)\otimes b_i))\\
		&=& (\imath\otimes\varphi\otimes\varphi)(\sum_i(x\otimes 1\otimes 1)(\imath\otimes\Delta)(\rho(y_i))(E(E(z\otimes a))\otimes b_i))\\
		&=&  (\imath\otimes\varphi\otimes\varphi)(\sum_i(x\otimes 1\otimes 1)(\imath\otimes\Delta)(\rho(y_i))(E(z\otimes a)\otimes b_i)).
	\end{eqnarray*}
	
	Hence, $\theta(x\otimes y)(z\otimes a)=\theta(x\otimes y)(E(z\otimes a))$.
\end{proof}

\begin{rem} Under these conditions, Lemma \ref{aplicteta} suggests  to define the following algebra
	$$B:= ((\widehat{A}\cdot R) \# \widehat{A})|_{E((\widehat{A}\cdot R) \otimes A)}$$
	with product given by
	$$(x\# \widehat{a})|_{E((\widehat{A}\cdot R) \otimes A)} (y\# \widehat{b})|_{E((\widehat{A}\cdot R) \otimes A)} = [(x\# \widehat{a})(y\# \widehat{b})]|_{E((\widehat{A}\cdot R) \otimes A)},$$
	for all $x,y\in\widehat{A}\cdot R$ and $\widehat{a},\widehat{b}\in\widehat{A}$.

\vu

	Therefore, it is important to observe that the results \ref{bimodu1}, \ref{bimodu2} e \ref{apli_parent} still remains true for the algebra $B$. Then in what follows, we will write this algebra $B$ just as $(\widehat{A}\cdot R)\# \widehat{A}$, in order to do not overload the notation. 
\end{rem}

\begin{pro}\label{aplic_colche} Let $(R,\rho,E)$ be a restrict symmetric partial coaction. Then the linear map
	\begin{eqnarray*}
		[ \ , \ ]:(\widehat{A}\cdot R)\otimes_{R^{\underline{coA}}}(\widehat{A}\cdot R) & \longrightarrow &(\widehat{A}\cdot R)\#\widehat{A}\\
		x\otimes y & \longmapsto & [x,y]=xy^{(0)}\#\varphi(y^{(1)}\underline{\hspace{0.3cm}})
	\end{eqnarray*}
	is $((\widehat{A}\cdot R)\#\widehat{A})$-bilinear and $[x\triangleleft m,y]=[x,m\triangleright y],$ for all $x, y\in\widehat{A}\cdot R$, $m\in {R^{\underline{coA}}}$.
	\end{pro}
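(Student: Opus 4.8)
The bracket $[\ ,\ ]$ coincides with the map $\theta$ of Lemma \ref{aplicteta}, and it is the partner of the pairing $(\ ,\ )$ of Proposition \ref{apli_parent} in the Morita context, so the plan is to lean on those two results and supply only the genuinely new ingredient, the $((\widehat{A}\cdot R)\#\widehat{A})$-bilinearity. First I would record that the $R^{\underline{coA}}$-balancing is already contained in Lemma \ref{aplicteta}: with $x\triangleleft m=xm$ from Lemma \ref{bimodulestructure} and $m\triangleright y=my$ from Lemma \ref{bimodu2}, the identity $[x\triangleleft m,y]=[x,m\triangleright y]$ is precisely the balancing computation proved there, so $[\ ,\ ]$ descends to $(\widehat{A}\cdot R)\otimes_{R^{\underline{coA}}}(\widehat{A}\cdot R)$ and, by the relation $\theta(x\otimes y)(z\otimes a)=\theta(x\otimes y)(E(z\otimes a))$ also established in that lemma, lands in the restricted algebra $B=(\widehat{A}\cdot R)\#\widehat{A}$. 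This settles the balancing clause of the statement and reduces everything to proving left and right $B$-linearity.

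For the left linearity I would expand both sides of $[(x\#\widehat{a})\triangleright x',\,y]=(x\#\widehat{a})[x',y]$. The left-hand side equals $x(\widehat{a}\cdot x')y^{(0)}\#\varphi(y^{(1)}\underline{\hspace{0.3cm}})$ by Lemma \ref{bimodulestructure}, while the right-hand side is evaluated via Definition \ref{def_smash} and the dual coproduct of $\widehat{A}$. Writing $\widehat{a}\cdot x'=(\imath\otimes\varphi)(\rho(x')(1\otimes a))$ turns both sides into expressions in $\rho$ and $\varphi$, which I would reconcile using the rewriting rule (\ref{escritacoa1}), the absorption $E\rho(x')=\rho(x')$ of Lemma \ref{rhoigual_rhoe}, and the dual comultiplication identities (\ref{escritadelta1})--(\ref{escritadelta2}), in the same manner as the corresponding step in the proof of Proposition \ref{apli_parent}.

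The delicate half is the right linearity $[x,\,y\triangleleft(z\#\widehat{b})]=[x,y](z\#\widehat{b})$, because the right action $y\triangleleft(z\#\widehat{b})=S^{-1}(\widehat{b}^{\delta})\cdot(yz)$ of Lemma \ref{bimodu2} carries both the inverse antipode and the modular twist $\widehat{b}^{\delta}=\varphi(\underline{\hspace{0.3cm}}\,\delta b)$, whereas the product $[x,y](z\#\widehat{b})$ on the right is taken inside the smash product, whose multiplication already encodes $\widehat{\Delta}$. The plan is to transport the action $S^{-1}(\widehat{b}^{\delta})\cdot(\ )$ across the coaction using (\ref{escritacoa3})--(\ref{escritacoa4}) and Lemma \ref{rhoigual_rhoe}, and then to cancel the $\delta$-factors produced on the two sides by repeatedly invoking the left invariance of $\varphi$ together with the defining relation $(\varphi\otimes\imath)\Delta(a)=\varphi(a)\delta$ of (\ref{grouplike}). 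I expect this $\delta$- and left-invariance bookkeeping to be the main obstacle; it is the exact analogue of the $(\ast)$ step in Proposition \ref{apli_parent}, and once it is done the $B$-bilinearity, and with it the proposition, follows.
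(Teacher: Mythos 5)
Your proposal follows the paper's proof in essentially the same way: the balancing clause is delegated to Lemma \ref{aplicteta} (the paper's proof opens with precisely this reduction, "it only remains to check the bilinearity"), and the two bilinearity identities are to be verified by evaluating on elements of $E((\widehat{A}\cdot R)\otimes A)$ using the smash product multiplication, the dual coproduct of $\widehat{A}$, the left invariance of $\varphi$, the modular element $\delta$ via (\ref{grouplike}), and the rewriting rules such as (\ref{escritacoa3}) together with Lemmas \ref{rhoigual_rhoe} and \ref{reduzi} --- exactly the tools the paper deploys. The only caveat is that you outline rather than execute the two long computations, but the route you describe, including the delicate handling of $S^{-1}(\widehat{a}^{\delta})$ on the right-module side, is the one the paper takes.
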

\begin{proof}
	By Lemma \ref{aplicteta} it only remains to check the  bilinearity of the map $[ \ , \ ]$. Indeed, consider $\widehat{a}\in\widehat{A}$ and $x,y,z\in\widehat{A}\cdot R$,
	\begin{eqnarray*}
		& &\hspace{-2cm}((x\#\widehat{a})[y,z])(w\otimes b)=\\
		&=& ((x\# \widehat{a})(yz^{(0)}\#\varphi(z^{(1)}\underline{\hspace{0.3cm}} )))(w\otimes b)\\
		&=& (x(\widehat{a}_{(1)}\cdot yz^{(0)})\# \widehat{a}_{(2)}\varphi(z^{(1)}\underline{\hspace{0.3cm}} ))(w\otimes b)\\
		&\stackrel{(*)}{=}& \sum_i x(yz^{(0)})^{(0)}w\varphi((yz^{(0)})^{(1)}c_i)\varphi(b d_i)\\
		&=& \sum_i (\imath\otimes\varphi)((x\otimes 1)\rho(yz^{(0)})(w\otimes 1)(1\otimes c_i))\varphi(b d_i)\\
		&=& \sum_i (\imath\otimes\varphi)((x\otimes 1)((yz^{(0)})^{(0)}w\otimes(yz^{(0)})^{(1)})(1\otimes c_i))\varphi(b d_i)\\
		&=& x ((yz^{(0)})^{(0)}w)(\sum_i\varphi(\underline{\hspace{0.3cm}} c_i)\otimes
\varphi(\underline{\hspace{0.3cm}} d_i))((yz^{(0)})^{(1)}\otimes b)\\
		&\stackrel{(**)}{=}& x ((yz^{(0)})^{(0)}w)(\widehat{\Delta}(\widehat{a})(1\otimes\varphi (z^{(1)}\underline{\hspace{0.3cm}} )))
((yz^{(0)})^{(1)}\otimes b)\\
		&=& x ((yz^{(0)})^{(0)}w)(\widehat{a}\otimes \varphi(z^{(1)}\underline{\hspace{0.3cm}} ))(((yz^{(0)})^{(1)}\otimes 1)\Delta(b))\\
		&=& x ((yz^{(0)})^{(0)}w)(\varphi\otimes\varphi(z^{(1)}\underline{\hspace{0.3cm}} ))(((yz^{(0)})^{(1)}\otimes 1)\Delta(b)(a\otimes 1))\\
		&=&  x ((yz^{(0)})^{(0)}w)\varphi((yz^{(0)})^{(1)}(b_{(1)}a))\varphi(z^{(1)}b_{(2)})\\
		&=& (\imath\otimes\varphi\otimes\varphi)((x\otimes 1\otimes 1)(\rho\otimes \imath)((y\otimes 1)\rho(z))(w\otimes \Delta(b)(a\otimes 1))),
	\end{eqnarray*}
		where, for the equalities $(*)$ and $(**)$ we use  $\widehat{\Delta}(\widehat{a})
(1\otimes \varphi(z^{(1)}\underline{\hspace{0.3cm}} ))=\sum_i\varphi(\underline{\hspace{0.3cm}} c_i)\otimes\varphi(\underline{\hspace{0.3cm}} d_i)$. 

On the other hand, for all $w\otimes b\in E((\widehat{A}\cdot R)\otimes A)$,
		\begin{eqnarray*}
		&&\hspace{-1.5cm} [(x\#\widehat{a})\triangleright y,z](w\otimes b)=\\
		&=& [x(\widehat{a}\cdot y),z](w\otimes b)\\		
		&=& (x(\widehat{a}\cdot y)z^{(0)}\otimes\varphi(z^{(1)}\underline{\hspace{0.3cm}} ))(w\otimes b)\\
		&=& x(\widehat{a}\cdot y)z^{(0)}w\varphi(z^{(1)}b)\\
		&=& xy^{(0)}\varphi(y^{(1)}a)z^{(0)}w\varphi(z^{(1)}b)\\
		&=& (xy^{(0)})z^{(0)}w\varphi(y^{(1)}a)\varphi(z^{(1)}b)\\
		&\stackrel{(\ast)}{=}& (\imath\otimes\varphi\otimes\varphi)((x\otimes 1)\rho(y)\otimes 1)(\imath\otimes\Delta)(\rho(z)(1\otimes b))(w\otimes a\otimes 1))\\
		&=& (\imath\otimes\varphi\otimes\varphi)((x\otimes 1\otimes 1)(\rho(y)\otimes 1)(\imath\otimes\Delta)(\rho(z))(w\otimes \Delta(b)(a\otimes 1)))\\
		&\stackrel{\ref{reduzi}}{=}& (\imath\otimes\varphi\otimes\varphi)((x\otimes 1\otimes 1)(\rho\otimes \imath)((y\otimes 1)\rho(z))(w\otimes\Delta(b)(a\otimes 1))),	
	\end{eqnarray*}
 in which in $(*)$ we used the left invariance of the integral $\varphi$. Thus, $(x\#\widehat{a})[y,z]=[(x\#\widehat{a})\triangleright y,z]$.
	
	\vu
	
	Also, we have,
	\begin{eqnarray*}
		([y,z](x\#\widehat{a}))(w\otimes b)&=&((yz^{(0)})(\varphi(z^{(1)}\underline{\hspace{0.3cm}} )_{(1)}\cdot x)\# 
\varphi(z^1\underline{\hspace{0.3cm}} )_{(2)}\widehat{a})(w\otimes b)\\
		&=&(\sum_i (yz^{(0)})(\varphi(\underline{\hspace{0.3cm}} c_i)\cdot x) \# \varphi(\underline{\hspace{0.3cm}} d_i))(w\otimes b)\\
		&=& \sum_i (\imath\otimes\varphi)((yz^{(0)}\otimes 1)\rho(x)(1\otimes c_i))w\varphi(bd_i)\\
		&=&  \sum_i (\imath\otimes\varphi)(((yz^{(0)})x^{(0)}\otimes x^{(1)})(1\otimes c_i))w\varphi(bd_i)\\
		&=& \sum_i(yz^{(0)})x^{(0)}\varphi(x^{(1)}c_i)w\varphi(bd_i)\\
		&=& (yz^{(0)})x^{(0)}w(\widehat{\Delta}(\varphi(z^{(1)}\underline{\hspace{0.3cm}} ))(1\otimes\widehat{a}))(x^{(1)}\otimes b)\\
		&=& (yz^{(0)})x^{(0)}w\varphi(z^{(1)}x^{(1)}b_{(1)})\widehat{a}(b_{(2)})\\
		&=& (\imath\otimes\varphi\otimes\widehat{a})(((yz^{(0)}\otimes z^{(1)})\rho(x)(w\otimes 1)\otimes 1)(1\otimes\Delta(b)))\\
		&=& (\imath\otimes\varphi\otimes\widehat{a})(((y\otimes 1)\rho(zx)(w\otimes 1)\otimes 1)(1\otimes\Delta(b)))\\
		&=& (\imath\otimes\varphi\otimes\widehat{a})(((y(zx)^{(0)}\otimes (zx)^{(1)})(w\otimes 1)\otimes 1)(1\otimes\Delta(b)))\\
		&\stackrel{(*)}{=}& (y(zx)^{(0)})w\varphi((zx)^{(1)}b_{(1)})\widehat{a}(b_{(2)}S^{-1}(\delta)\delta)\\
		&=& (y(zx)^{(0)})w\varphi((zx)^{(1)}b_{(1)})\widehat{a}^{\delta}(b_{(2)}S^{-1}(\delta))\\
		&=& (y(zx)^{(0)})w\varphi((zx)^{(1)}b_1)S^{-1}(\widehat{a}^{\delta})(\delta S(b_{(2)}))\\
		&\stackrel{(\ref{grouplike})}{=}& (y(zx)^{(0)})w S^{-1}(\widehat{a}^{\delta})(((\varphi\otimes \imath)\Delta((zx)^{(1)}b_{(1)}))S(b_{(2)}))\\
		&=& (y(zx)^{(0)})w S^{-1}(\widehat{a}^{\delta})(((zx)^{(1)})_{(2)})\varphi(((zx)^{(1)})_{(1)}b)\\
		&=& (\imath\otimes\varphi\otimes S^{-1}(\widehat{a}^{\delta}))((y(zx)^{(0)})w\otimes \Delta((zx)^{(1)})(b\otimes 1))\\
		&=& y(zx)^{(0)}w\varphi(((zx)^{(1)})_{(1)}b)S^{-1}(\widehat{a}^{\delta})(((zx)^{(1)})_{(2)})\\
		&\stackrel{(**)}{=}& y(zx)^{(0)}w\varphi(((zx)^{(1)})_{(1)}b)\varphi(((zx)^{(1)})_{(2)}c)\\
		&=& (\imath\otimes\varphi\otimes\varphi)((\imath\otimes\Delta)((y\otimes 1)\rho(zx))(w\otimes b\otimes c)),
	\end{eqnarray*}
where, in equality $(*)$ we used the distinguished group-like element $\delta \in M(A)$ as defined in (\ref{grouplike}) and in $(**)$ we defined the element $c\in A$ such that 
\begin{equation} \label{elementc}
S^{-1}(\widehat{a}^{\delta})= \varphi (\underline{\hspace{0.3cm}} c).
\end{equation}
 	
On the other hand,
\begin{eqnarray*}
	[y,z\triangleleft(x\#\widehat{a})](w\otimes b)&=&[y,S^{-1}(\widehat{a}^{\delta})\cdot (zx)](w\otimes b)\\
	&\stackrel{(\ref{elementc})}{=}& [y,\varphi(\underline{\hspace{0.3cm}} c)\cdot (zx)](w\otimes b)\\
	&=& (y((zx)^{(0)})^{(0)}\#\varphi(((zx)^{(0)})^{(1)}\underline{\hspace{0.3cm}} ))\varphi((zx)^{(1)}c)(w\otimes b)\\
	&=& y((zx)^{(0)})^{(0)}w\varphi(((zx)^{(0)})^{(1)}b)\varphi((zx)^{(1)}c)\\	
	&=& (\imath\otimes\varphi\otimes\varphi)((y\otimes 1\otimes 1)(\rho\otimes \imath)(\rho(zx)(1\otimes c))(w\otimes b\otimes 1))\\
	&\stackrel{(\ref{escritacoa3})}{=}& (\imath\otimes\varphi\otimes\varphi)((y\otimes 1\otimes 1)(\imath\otimes\Delta)(\rho(zx))(E(w\otimes b)\otimes c))\\
	&=& (\imath\otimes\varphi\otimes\varphi)((\imath\otimes\Delta)((y\otimes 1)\rho(zx))(w\otimes b\otimes c)),				
	\end{eqnarray*}
	where in the last equality we are using $w\otimes b\in E((\widehat{A}\cdot R)\otimes A)$. Then $[y,z](x\#\widehat{a})=[y,z\triangleleft(x\#\widehat{a})]$. Therefore, $[ \ , \ ]$ is bilinear.
\end{proof}

Now, we have all the necessary conditions to construct the corresponding Morita context.

\begin{thm} Let $(R, \rho, E)$ be a restrict symmetric partial $A$-comodule algebra. Then
	$$((\widehat{A}\cdot R)\#\widehat{A},R^{\underline{coA}},{}_{(\widehat{A}\cdot R)\#\widehat{A}}(\widehat{A}\cdot R)_{R^{\underline{coA}}},{}_{R^{\underline{coA}}}(\widehat{A}\cdot R)_{(\widehat{A}\cdot R)\#\widehat{A}},[ \ , \ ],( \ , \ ))$$
	is a Morita context.
	\label{contMoritparc}
\end{thm}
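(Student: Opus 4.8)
The plan is to verify the three defining axioms of a Morita context: that the two bimodule maps $[\ ,\ ]$ and $(\ ,\ )$ are morphisms of the appropriate bimodules, and that the two associativity (compatibility) conditions hold relating the pairings. Most of the structural work has already been done in the preceding lemmas and propositions, so this proof is essentially an assembly of those results into the standard Morita data.

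The plan is to verify the two associativity (compatibility) conditions of a Morita context, since all the remaining data have already been assembled in the preceding results. Indeed, that $(\widehat{A}\cdot R)\#\widehat{A}$ and $R^{\underline{coA}}$ are algebras was established in Proposition \ref{pro_algcontMorit} and Definition \ref{def_coinvparcial}; that $\widehat{A}\cdot R$ is simultaneously an $((\widehat{A}\cdot R)\#\widehat{A}, R^{\underline{coA}})$-bimodule and an $(R^{\underline{coA}}, (\widehat{A}\cdot R)\#\widehat{A})$-bimodule is exactly Lemmas \ref{bimodu1} and \ref{bimodu2}; and that $[\ ,\ ]$ and $(\ ,\ )$ are well defined, bilinear over the respective base algebras, and balanced over $\otimes_{R^{\underline{coA}}}$ and $\otimes_{(\widehat{A}\cdot R)\#\widehat{A}}$ is the content of Propositions \ref{aplic_colche} and \ref{apli_parent}. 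Thus it only remains to prove the two identities
$$[x,y]\triangleright z = x\triangleleft (y,z) \qquad\text{and}\qquad (y,x)\triangleright z = y\triangleleft [x,z],$$
for all $x,y,z\in\widehat{A}\cdot R$.

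For the first identity I would expand the left-hand side directly. Since $[x,y]=xy^{(0)}\#\varphi(y^{(1)}\underline{\hspace{0.3cm}})$, the left $((\widehat{A}\cdot R)\#\widehat{A})$-action of Lemma \ref{bimodu1} gives $[x,y]\triangleright z = xy^{(0)}\,(\varphi(y^{(1)}\underline{\hspace{0.3cm}})\cdot z)$. Evaluating the partial $\widehat{A}$-action of Proposition \ref{pro_dualocoacao}, the functional $\varphi(y^{(1)}\underline{\hspace{0.3cm}})$ pairs against the second leg of $\rho(z)$ to produce $xy^{(0)}z^{(0)}\varphi(y^{(1)}z^{(1)})$. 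Because $\rho$ is an algebra homomorphism, $\rho(y)\rho(z)=\rho(yz)$, so $y^{(0)}z^{(0)}\otimes y^{(1)}z^{(1)}=(yz)^{(0)}\otimes (yz)^{(1)}$, and the expression collapses to $x\,(\imath\otimes\varphi)\rho(yz)=x\,(y,z)=x\triangleleft (y,z)$, which is the right-hand side. This step rests only on the multiplicativity of $\rho$ and is therefore short.

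For the second identity I would again compute both sides explicitly. Writing the left-hand side as a multiplier acting on $z$, one has $(y,x)\triangleright z = \big((\imath\otimes\varphi)\rho(yx)\big)z = (\imath\otimes\varphi)(\rho(yx)(z\otimes 1))$, where reducedness of $\rho$ guarantees $\rho(yx)(z\otimes 1)\in E(R\otimes A)$, so this is well defined. On the other hand, $[x,z]=xz^{(0)}\#\varphi(z^{(1)}\underline{\hspace{0.3cm}})$, and the right $((\widehat{A}\cdot R)\#\widehat{A})$-action of Lemma \ref{bimodu2} gives $y\triangleleft [x,z]=S^{-1}\big((\varphi(z^{(1)}\underline{\hspace{0.3cm}}))^{\delta}\big)\cdot (yxz^{(0)})$. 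To match this with the integral form above, I would introduce the distinguished group-like element $\delta\in M(A)$ through its defining property $(\varphi\otimes\imath)\Delta(a)=\varphi(a)\delta$, rewrite the $\delta$-twisted functional accordingly, and then repeatedly invoke the left invariance of $\varphi$ together with the antipode identities to peel off $S^{-1}$, exactly as in the balancing computations of Propositions \ref{apli_parent} and \ref{aplic_colche}.

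I expect the second identity to be the main obstacle. The appearance of the modular element $\delta$ and of the inverse antipode in the right module structure forces careful bookkeeping of the Sweedler legs of $(yx)^{(1)}$, each manipulation being legitimized by a covering element and by left invariance of $\varphi$; this is precisely the kind of calculation that already dominated the proof of Proposition \ref{aplic_colche}, and the verification of the Morita compatibility is essentially a reprise of those arguments. Once both identities are in place, the six-tuple satisfies all the axioms of a Morita context and the theorem follows.
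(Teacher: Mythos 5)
Your proposal is correct and follows essentially the same route as the paper: both reduce the theorem to the two Morita compatibility identities, citing Lemmas \ref{bimodu1} and \ref{bimodu2} and Propositions \ref{apli_parent} and \ref{aplic_colche} for the bimodule structures and the bilinearity of the pairings. You correctly single out $[x,y]\triangleright z = x\triangleleft(y,z)$ as the identity that collapses immediately via multiplicativity of $\rho$ (the paper dismisses it as trivial), and the $\delta$-twisted identity as the one requiring the long computation with left invariance of $\varphi$, the antipode, and Lemma \ref{reduzi} --- precisely the calculation the paper carries out in full.
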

\begin{proof}
	By the results \ref{bimodu1}, \ref{bimodu2}, \ref{apli_parent} and \ref{aplic_colche} it only remains to verify the compatibility conditions 
	\begin{eqnarray}
	[x,y]\triangleright z&=& x\triangleleft(y,z) \label{cont_comp1}\\
	(x,y)\triangleright z&=& x\triangleleft[y,z], \label{cont_comp2}
	\end{eqnarray}
	for all $x, y, z\in\widehat{A}\cdot R$.
	
	Indeed, for  $x,y,z\in\widehat{A}\cdot R$, write $(y\otimes 1)\rho(z)=\sum_iy_i\otimes a_i$ and for each $i$ define the elements $b_i\in A$ such that \begin{equation}\label{bi}
	\varphi(a_i \underline{\hspace{0.3cm}})=\varphi(\underline{\hspace{0.3cm}} b_i). 
	\end{equation}
	Then
	\begin{eqnarray*}
		&&\hspace{-1.8cm} r(x\triangleleft[y,z])=\\
		\hspace{0.7cm}&=& r(x\triangleleft(yz^{(0)}\#\varphi(z^{(1)}\underline{\hspace{0.3cm}})))\\
		&=&\sum_i r(x\triangleleft(y_i\#\varphi(a_i\underline{\hspace{0.3cm}} )))\\		
		&\stackrel{(\ref{bi})}{=}&\sum_i r(x\triangleleft(y_i\#\varphi(\underline{\hspace{0.3cm}} b_i)))\\
		&=&\sum_i r(S^{-1}(\widehat{b_i}^{\delta})\cdot (xy_i))\\
		&=& 	\sum_i r(\varphi(\underline{\hspace{0.3cm}} c_i)\cdot (xy_i))\\
		&=& \sum_i(\imath\otimes \varphi)((r\otimes 1)\rho(xy_i)(1\otimes c_i))\\
		&=& \sum_i r(xy_i)^{(0)}S^{-1}(\widehat{b_i}^{\delta})((xy_i)^{(1)})\\
		&=& \sum_i r(xy_i)^{(0)}\varphi(S^{-1}((xy_i)^{(1)})\delta b_i)\\
		&\stackrel{(\ref{bi})}{=}& \sum_i r(xy_i)^{(0)}\varphi(a_iS^{-1}((xy_i)^{(1)})\delta)\\
		&=& (\imath\otimes\varphi(\underline{\hspace{0.3cm}} \delta))(\imath\otimes m\circ\sigma\circ(S^{-1}\otimes \imath))((r\otimes 1\otimes 1)(\rho(x)\otimes 1)(\rho\otimes \imath)((y\otimes 1)\rho(z)))\\
		&\stackrel{\ref{reduzi}}{=}& (\imath\otimes\varphi(\underline{\hspace{0.3cm}} \delta))(\imath\otimes m\circ\sigma\circ(S^{-1}\otimes \imath))((r\otimes 1\otimes 1)(\rho(x)\rho(y)\otimes 1)(\imath\otimes\Delta)(\rho(z)))\\
		&=& r(xy)^{(0)}\varphi(S^{-1}((xy)^{(1)})\delta)z\\
		&=& r((xy)^{(0)}z)\varphi(S^{-1}((xy)^{(1)})\delta)\\
		&\stackrel{(\ast)}{=}& r(((xy)^{(0)}z)\varphi((xy)^{(1)}))\\
		&=& r((x,y)\triangleright z),
	\end{eqnarray*}
	for all $r\in R$, where the equality $(\ast)$ is ensured by $\varphi(S(a))=\varphi(a\delta)$.
	
	The other condition of compatibility follows trivially. 
\end{proof}

\begin{pro}
	Under the conditions of Theorem \ref{contMoritparc}, if the linear maps $[ \ , \ ]$ and  $( \ , \ )$  are surjective, then they are  injective.
	\label{Pro_sobrinje}
\end{pro}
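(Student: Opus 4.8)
The plan is to read the statement as the classical fact that a Morita context whose two connecting maps are surjective is automatically strict, so that both maps are in fact isomorphisms; by Theorem~\ref{contMoritparc} the whole context (bimodule structures from Lemmas~\ref{bimodu1} and~\ref{bimodu2}, and the pairings from Propositions~\ref{apli_parent} and~\ref{aplic_colche}) is already in place, and only injectivity is missing. The single computational device I would use is: given an element in the kernel of one pairing, resolve a (local) unit of the \emph{target} algebra via the surjectivity hypothesis, insert it into the left tensor--leg, and then move scalars across the balanced tensor product by applying the two compatibility identities \eqref{cont_comp1} and \eqref{cont_comp2} in turn; after the second application the original kernel element reappears as a right factor and annihilates the expression. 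Since the two target algebras are unital to different extents, I would carry out the two cases separately.

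First I would treat $(\ ,\ )\colon(\widehat{A}\cdot R)\otimes_{(\widehat{A}\cdot R)\#\widehat{A}}(\widehat{A}\cdot R)\to R^{\underline{coA}}$, the benign case: $R^{\underline{coA}}$ is unital with unit $1_{M(R)}$ (as observed in the proof of Lemma~\ref{bimodu1}) and $\widehat{A}\cdot R$ is a unital $R^{\underline{coA}}$--module on both sides, so surjectivity lets me write $1_{M(R)}=\sum_i(u_i,v_i)$. For $\xi=\sum_k x_k\otimes y_k$ with $\sum_k(x_k,y_k)=0$ the device gives
\begin{align*}
\xi=\sum_k x_k\otimes y_k
&=\sum_{k,i}\bigl((u_i,v_i)\triangleright x_k\bigr)\otimes y_k
\Ref{cont_comp2}\sum_{k,i}\bigl(u_i\triangleleft[v_i,x_k]\bigr)\otimes y_k\\
&=\sum_{k,i}u_i\otimes\bigl([v_i,x_k]\triangleright y_k\bigr)
\Ref{cont_comp1}\sum_{k,i}u_i\otimes\bigl(v_i\triangleleft(x_k,y_k)\bigr)=0,
\end{align*}
where the first equality inserts $1_{M(R)}=\sum_i(u_i,v_i)$, the unlabelled middle equality balances the element $[v_i,x_k]\in(\widehat{A}\cdot R)\#\widehat{A}$ across $\otimes_{(\widehat{A}\cdot R)\#\widehat{A}}$, and the final vanishing uses $\sum_k(x_k,y_k)=0$. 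Hence $(\ ,\ )$ is injective.

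For $[\ ,\ ]\colon(\widehat{A}\cdot R)\otimes_{R^{\underline{coA}}}(\widehat{A}\cdot R)\to(\widehat{A}\cdot R)\#\widehat{A}$ the same scheme applies, but now the target algebra $(\widehat{A}\cdot R)\#\widehat{A}$ is in general non--unital, and this is where the main difficulty lies. What I need is, for the finitely many left factors $x_k$ appearing in a kernel element $\eta=\sum_k x_k\otimes y_k$, a \emph{common} element $u\in(\widehat{A}\cdot R)\#\widehat{A}$ with $u\triangleright x_k=x_k$ for all $k$. I would produce such a local unit from the unitarity of $\widehat{A}\cdot R$ as a left $(\widehat{A}\cdot R)\#\widehat{A}$--module (Lemma~\ref{bimodu1}), together with the local units of the dual $\widehat{A}$ and condition~(iii) of Definition~\ref{def_acaoparcialmulti}; surjectivity of $[\ ,\ ]$ then guarantees that $u$ can be written as $\sum_i[p_i,q_i]$. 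With this in hand, for $\eta=\sum_k x_k\otimes y_k$ with $\sum_k[x_k,y_k]=0$ one computes
\begin{align*}
\eta=\sum_k x_k\otimes y_k
&=\sum_{k,i}\bigl([p_i,q_i]\triangleright x_k\bigr)\otimes y_k
\Ref{cont_comp1}\sum_{k,i}\bigl(p_i\triangleleft(q_i,x_k)\bigr)\otimes y_k\\
&=\sum_{k,i}p_i\otimes\bigl((q_i,x_k)\triangleright y_k\bigr)
\Ref{cont_comp2}\sum_{k,i}p_i\otimes\bigl(q_i\triangleleft[x_k,y_k]\bigr)=0,
\end{align*}
the middle step balancing $(q_i,x_k)\in R^{\underline{coA}}$ across $\otimes_{R^{\underline{coA}}}$ and the last equality using $\sum_k[x_k,y_k]=0$. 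Thus the hard part is not the algebraic shuttling, which is purely formal once the compatibilities of Theorem~\ref{contMoritparc} are available, but the construction of the common local unit $u$ inside the image of $[\ ,\ ]$; everything else is dictated by \eqref{cont_comp1} and \eqref{cont_comp2}.
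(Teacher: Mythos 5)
Your overall strategy is the classical Morita shuttle (resolve a unit of the target algebra through the surjectivity hypothesis, then pass it across the balanced tensor product using \eqref{cont_comp1} and \eqref{cont_comp2}), and the half of your argument dealing with $(\ ,\ )$ is complete and correct: $R^{\underline{coA}}$ really does contain $1_{M(R)}$ and both $R^{\underline{coA}}$-module structures on $\widehat{A}\cdot R$ are unital, so writing $1_{M(R)}=\sum_i(u_i,v_i)$ and shuttling works without further input. (The paper only treats $[\ ,\ ]$ explicitly and dismisses $(\ ,\ )$ with ``similarly,'' so this part of your write-up is, if anything, more explicit than the source.)

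The half dealing with $[\ ,\ ]$ has a genuine gap, and you have correctly located it but not closed it: you need, for the finitely many left factors $x_k$ of a kernel element, a single $u\in(\widehat{A}\cdot R)\#\widehat{A}$ with $u\triangleright x_k=x_k$ \emph{exactly}. Unitarity of the left module (Lemma \ref{bimodu1}) only says every element of $\widehat{A}\cdot R$ is a \emph{sum} of elements $u_j\triangleright z_j$; upgrading this to a common local unit requires knowing that $(\widehat{A}\cdot R)\#\widehat{A}$ (or rather its restriction $B$) is a ring with local units, which is established nowhere in the paper and does not follow from condition (iii) of Definition \ref{def_acaoparcialmulti} alone. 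The paper's proof of Proposition \ref{Pro_sobrinje} sidesteps exactly this obstacle: instead of acting on the \emph{left} tensor leg with a local unit, it lets an \emph{arbitrary} element $\sum_j z_j\#\widehat{a_j}$ act on the \emph{right} tensor leg via $(x\otimes y)\blacktriangleleft(z\#\widehat{a})=x\otimes(y\triangleleft(z\#\widehat{a}))$, writes that arbitrary element as $\sum_k[r_k,s_k]$ by surjectivity, runs the same compatibility shuttle to get $\xi\blacktriangleleft v=0$ for every $v$, and concludes $\xi=0$ from \emph{nondegeneracy} of this right module. Nondegeneracy is a strictly weaker input than the existence of local units, so if you want to keep your architecture you should either prove that $B$ has local units (nontrivial) or switch to the paper's nondegeneracy formulation, in which case your shuttle computation goes through verbatim with the roles of the two tensor legs exchanged.
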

\begin{proof}
	In order to show the injectivity of the linear map $[ \ , \ ]$, we will use Lemma \ref{bimodu2} to define the right action $\blacktriangleleft$ of $((\widehat{A}\cdot R)\#\widehat{A})$ on $((\widehat{A}\cdot R)\otimes_{R^{\underline{coA}}} (\widehat{A}\cdot R))$ as follows,
	\begin{eqnarray*}
		(x\otimes y)\blacktriangleleft(z\#\widehat{a})=x\otimes(y\triangleleft (z\#\widehat{a}))=x\otimes (S^{-1}(\widehat{a}^{\delta})\cdot (yz)),
	\end{eqnarray*}
	for all $x,y,z\in \widehat{A}\cdot R$ and $\widehat{a}\in\widehat{A}$. Then $\blacktriangleleft$ is a unitary nondegenerate right module.
	
	Now, assume that $\sum_ix_i\otimes y_i\in ker[ \ , \ ]$, i.e., $\sum_i[x_i,y_i]=0$. By the surjectivity of the linear map $[ \ , \ ]$ we can write $\sum_jz_j\#\widehat{a_j}=\sum_k[r_k,s_k]\in (\widehat{A}\cdot R)\#\widehat{A}$, thus
	\begin{eqnarray*}
		(\sum_ix_i\otimes y_i)\blacktriangleleft(\sum_jz_j\#\widehat{a_j})&=& \sum_ix_i\otimes (y_i\triangleleft\sum_jz_j\#\widehat{a_j})\\
		&=& \sum_ix_i\otimes y_i\triangleleft \sum_k[r_k,s_k]\\
		&\stackrel{(\ref{cont_comp2})}{=}& \sum_ix_i\otimes \sum_k(y_i,r_k)\triangleright s_k\\
		&=& \sum_{i,k} x_i\triangleleft(y_i,r_k)\otimes s_k\\
		&\stackrel{(\ref{cont_comp1})}{=}& \sum_{i,k} [x_i,y_i]\triangleright r_k\otimes s_k\\
		&=& 0,
	\end{eqnarray*}
	which means $(\sum_ix_i\otimes y_i)\blacktriangleleft(\sum_jz_j\#\widehat{a_j})=0$, for all $\sum_jz_j\#\widehat{a_j}\in (\widehat{A}\cdot R)\#\widehat{A}$. Therefore,  $\sum_ix_i\otimes y_i=0$.
	
	In a similar way, one can show the injectivity of the linear map $( \ , \ )$.
\end{proof}

\subsection{Galois Coaction}

Our goal is to connect the Morita context, constructed in the previous section, with the Galois theory inherent, as was made in the global case in \cite{Galois}. In what follows, $A$ is assumed to be  a regular multiplier Hopf algebra with integrals and $(R,\rho,E)$ a symmetric partial $A$-comodule algebra.

\begin{defi}\label{def_galois}
	We call $\rho$ a \textbf{partial Galois coaction} if $\rho$ is restrict and the linear map
	\begin{eqnarray*}
		\beta:(\widehat{A}\cdot R)\otimes_{R^{\underline{coA}}}(\widehat{A}\cdot R) & \longrightarrow & ((\widehat{A}\cdot R)\otimes A)E\\
		x\otimes y & \longmapsto & (x\otimes 1)\rho(y)
	\end{eqnarray*}
	is bijective.
\end{defi}

\begin{thm} If $(R,\rho,E)$ is a restrict symmetric partial $A$-comodule algebra, then, the following conditions are equivalents:
	\begin{enumerate}
		\item[(i)] $\rho$ is a partial Galois coaction;
		\item[(ii)] $\beta$ is surjective;
		\item[(iii)] $[ \ , \ ]$ is surjective.
	\end{enumerate}
\end{thm}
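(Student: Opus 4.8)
The plan is to establish the cyclic chain of implications (i) $\Rightarrow$ (ii) $\Rightarrow$ (iii) $\Rightarrow$ (i). Since $\rho$ is assumed to be restrict throughout, Definition \ref{def_galois} reduces condition (i) to the bijectivity of $\beta$, so (i) $\Rightarrow$ (ii) is immediate.

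The conceptual core is the factorization $[\ ,\ ]=\Phi\circ\beta$, where $\Phi$ is the linear map induced on the second tensor leg by $\Psi\colon A\to\widehat{A}$, $a\mapsto\varphi(a\,\underline{\hspace{0.3cm}})$. Indeed, using the sigma notation available for reduced coactions one writes $\beta(x\otimes y)=(x\otimes 1)\rho(y)=xy^{(0)}\otimes y^{(1)}$, and then $\Phi(xy^{(0)}\otimes y^{(1)})=xy^{(0)}\#\varphi(y^{(1)}\,\underline{\hspace{0.3cm}})=[x,y]$. The first task is thus to verify that $\Psi$ is a linear isomorphism $A\to\widehat{A}$: it is injective by the faithfulness of the integral $\varphi$, and its image is exactly $\widehat{A}$ by the standard duality theory for integrals on regular multiplier Hopf algebras (cf. \cite{Frame,Duality}), where $\{\varphi(a\,\underline{\hspace{0.3cm}})\,;\,a\in A\}$ and $\{\varphi(\underline{\hspace{0.3cm}}\,a)\,;\,a\in A\}$ determine the same subspace of the dual. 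Consequently $\Phi=\imath\otimes\Psi$ restricts to a linear isomorphism carrying the codomain $((\widehat{A}\cdot R)\otimes A)E$ of $\beta$ onto the codomain $B=(\widehat{A}\cdot R)\#\widehat{A}$ of $[\ ,\ ]$. Since $\Phi$ is bijective and $[\ ,\ ]=\Phi\circ\beta$, the map $\beta$ is surjective (respectively injective, bijective) if and only if $[\ ,\ ]$ is; in particular this yields the equivalence (ii) $\Leftrightarrow$ (iii) and, in the bijective case, reduces the bijectivity of $\beta$ to that of $[\ ,\ ]$.

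It remains to close the cycle with (iii) $\Rightarrow$ (i), that is, to promote surjectivity of $[\ ,\ ]$ to bijectivity. For this I would reproduce the argument from the proof of Proposition \ref{Pro_sobrinje}: equip $(\widehat{A}\cdot R)\otimes_{R^{\underline{coA}}}(\widehat{A}\cdot R)$ with the unitary nondegenerate right $((\widehat{A}\cdot R)\#\widehat{A})$-action $\blacktriangleleft$ of Lemma \ref{bimodu2}, take $\sum_i x_i\otimes y_i\in\ker[\ ,\ ]$, and write an arbitrary smash element as $\sum_k[r_k,s_k]$ using the surjectivity of $[\ ,\ ]$; the Morita compatibilities (\ref{cont_comp1}) and (\ref{cont_comp2}) together with $\sum_i[x_i,y_i]=0$ then give $(\sum_i x_i\otimes y_i)\blacktriangleleft(\sum_k[r_k,s_k])=0$, whence $\sum_i x_i\otimes y_i=0$ by nondegeneracy of $\blacktriangleleft$. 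Crucially, this half of Proposition \ref{Pro_sobrinje} uses only the surjectivity of $[\ ,\ ]$, not that of $(\ ,\ )$. Thus $[\ ,\ ]$ is bijective, hence so is $\beta$ by the previous paragraph, and since $\rho$ is restrict this is precisely (i).

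The main obstacle is exactly this upgrade from surjective to bijective: whereas (ii) $\Leftrightarrow$ (iii) is a formal consequence of the integral-induced isomorphism $\Phi$, showing that surjectivity alone forces injectivity rests on the full Morita machinery and, decisively, on the nondegeneracy of the action $\blacktriangleleft$. A secondary point demanding care is the precise identification, under $\Phi$, of the codomain $((\widehat{A}\cdot R)\otimes A)E$ of $\beta$ with the smash subalgebra $B$ serving as codomain of $[\ ,\ ]$; this depends on the faithfulness of $\varphi$ and on the role of the idempotent $E$ in the definition of $B$.
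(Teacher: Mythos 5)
Your proposal is correct and follows essentially the same route as the paper: the paper also factors $[\ ,\ ]=\alpha\circ\beta$ through the bijection $\alpha\colon (x\otimes a)E\mapsto x\#\varphi(a\,\underline{\hspace{0.3cm}})$ (your $\Phi$) to get (ii)$\Leftrightarrow$(iii), and closes the cycle by invoking Proposition \ref{Pro_sobrinje} to upgrade surjectivity of $[\ ,\ ]$ to bijectivity. Your explicit remark that the injectivity argument in Proposition \ref{Pro_sobrinje} uses only the surjectivity of $[\ ,\ ]$ (not of $(\ ,\ )$) is a careful and valid reading of that proof, which the paper leaves implicit.
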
		
\begin{proof} 
	
	(i)$\Rightarrow$ (ii) Follows from Definition \ref{def_galois}.
	
	(ii)$\Rightarrow$ (iii) Consider the linear map
	\begin{eqnarray*}
		\alpha : ((\widehat{A}\cdot R)\otimes A)E &\longrightarrow & ((\widehat{A}\cdot R)\#\widehat{A})|_{E((\widehat{A}\cdot R)\otimes A)}\\
		(x\otimes a)E &\longmapsto & x\#\varphi(a \underline{\hspace{0.3cm}} ).
	\end{eqnarray*}
	It is straightforward to check that  $\alpha$ is bijective.
	
	Notice that, since $[ \ , \ ] = \alpha\circ\beta$, then $[ \ , \ ]$ is surjective.
	
	(iii)$\Rightarrow$ (i) Suppose that the linear map $[ \ , \ ]$ is surjective hence, by Proposition \ref{Pro_sobrinje}, $[ \ , \ ]$ is bijective. Therefore, $\beta=\alpha^{-1}\circ [ \ , \ ]$ is bijective.
\end{proof}

\section{Acknowledgments}
The authors would like to thank to Alfons Van Daele for his kind solicitude in explaining the theory, leading to a better understanding of multiplier Hopf algebras. Also to Antonio Paques for fruitful discussions on the theory. 


\end{document}